\setlist[itemize]{leftmargin=6mm} 
\numberwithin{equation}{section}
\theoremstyle{plain}
\newtheorem{theorem}{Theorem}[section]
\newtheorem{proposition}[theorem]{Proposition} 
\newtheorem{lemma}[theorem]{Lemma} 
\newtheorem{corollary}[theorem]{Corollary} 
\newtheorem{conjecture}[theorem]{Conjecture}
\theoremstyle{definition}
\newtheorem{definition}[theorem]{Definition} 
\newtheorem{remark}[theorem]{Remark} 
\newtheorem{example}[theorem]{Example} 
\newcommand{\norm}{\varphi}
\newcommand{\p}{\partial}
\newcommand{\cl}[1]{\overline{#1}}
\newcommand{\scalarp}[2]{\left\langle #1,\, #2\right\rangle}
\newcommand{\strictlyincluded}{\Subset}
\newcommand{\subgraph}{{\rm sg}}
\newcommand{\epigraph}{{\rm epi}}
\newcommand{\supp}{\mathrm{supp\,}}
\newcommand{\dist}{{\rm dist}}
\newcommand{\res}{\mathop{\hbox{\vrule height 7pt width 0.5pt depth 0pt \vrule height 0.5pt width 3pt depth 0pt}}\nolimits}
\newcommand{\Wulff}{W}
\newcommand{\oWulff}{\mathring{W}}
\newcommand{\loc}{{\mathrm{loc}}}
\newcommand{\Lip}{{\mathrm{Lip}}}
\renewcommand{\div}{\mathrm{div}}
\newcommand{\R}{\mathbb{R}}
\renewcommand{\S}{\mathbb{S}}
\newcommand{\cA}{\mathcal{A}}
\newcommand{\cP}{\mathcal{P}}
\newcommand{\cV}{\mathcal{V}}
\newcommand{\cO}{\mathcal{O}}
\newcommand{\cF}{\mathcal{F}}
\newcommand{\cG}{\mathcal{G}}
\newcommand{\sL}{\mathscr{L}}
\newcommand{\sH}{\mathscr{H}}
\newcommand{\be}{\mathbf{e}}
\setlist[itemize]{leftmargin=6mm} 
\title{A De Giorgi conjecture on the regularity of minimizers of Cartesian area in 1D}
\author[G. Bellettini]{Giovanni Bellettini} 
\address[G.Bellettini]{University of Siena, Via Roma 56, 53100 Siena, Italy \& International Centre for Theoretical Physics (ICTP), Strada Costiera 11, 34151 Trieste, Italy}
\email{giovanni.bellettini@unisi.it}
\author[Sh. Kholmatov]{Shokhrukh Yu. Kholmatov}
\address[Sh. Kholmatov]{University of Vienna,Oskar-Morgenstern-Platz 1, 1090  Vienna, Austria \& 
Samarkand State University, University boulevard 15, 140104 Samarkand, Uzbekistan}
\email{shokhrukh.kholmatov@univie.ac.at}
\subjclass[2010]{49Q15, 49Q20, 26A06, 26A45}
\keywords{$L^p$-perturbation, anisotropic area functional, Cartesian area, BV-function, De Giorgi conjecture}
\date{\today}
\begin{document}

\begin{abstract}
We prove a $C^{1,1}$-regularity of minimizers of the functional 
$$
\int_I \sqrt{1+|Du|^2} + \int_I |u-g|ds,\quad u\in BV(I),
$$
provided $I\subset\R$ is a bounded open interval and $\|g\|_\infty$ is sufficiently small, thus partially establishing a De Giorgi conjecture in dimension one and codimension one. We also extend our result to a suitable anisotropic setting.
\end{abstract}

\maketitle

\section{Introduction}

The non-parametric minimal surfaces, more generally, the  prescribed mean curvature surfaces, have been extensively studied in the literature from the variational perspective (see e.g. \cite{Giusti:1980,Massari:1974, MM:1984_book, Miranda:1964, LS:2018} and the references therein). 
Given an open set $\Omega\subset\R^n$ and a sufficiently regular function $H:\Omega\to\R,$ the underlying equation is rewritten as
\begin{equation}\label{poplanuyc}
\div \frac{\nabla u}{\sqrt{1+|\nabla u|^2}} = H\quad\text{in $\Omega$}
\end{equation}
with a prescribed Dirichlet or Neumann boundary condition, and corresponds to the Euler-Lagrange equation of the functional
\begin{equation}\label{azstcub112}
\int_\Omega\sqrt{1+|\nabla u|^2}dx + \int_\Omega Hu\,dx,\quad u\in C^1(\Omega).
\end{equation}
It is well-known that under suitable assumptions on $\Omega$ and $H,$ the minimizers are in fact locally $C^{2+\alpha},$ and hence solve \eqref{poplanuyc} in a classical sense.

In the context of functionals with linear growth, a related problem is the existence and regularity of minimizers of the (convex, but not strictly convex) functional 
$$
\cF(u):=\int_\Omega\sqrt{1+|\nabla u|^2}dx + \int_\Omega |u-g|\,dx,\quad u\in C^1(\Omega),
$$
where $g\in L^1(\Omega)$ is given, see \cite{Degiorgi:1992}. In this case, the associated Euler-Lagrange equation becomes formally a differential inclusion of the form
\begin{equation}\label{minsur_equation}
\div \frac{\nabla u}{\sqrt{1+|\nabla u|^2}} \in 
\begin{cases}
\{1\} & \text{in $\{u>g\},$}\\
[-1,1] & \text{in $\{u=g\},$} \\
\{-1\} & \text{in $\{u<g\},$}
\end{cases}
\end{equation}
thus, in the sets $\{u>g\}$ and $\{u<g\},$ the subgraph of $u$ has mean curvature equal to $1$ and $-1,$ respectively. 

Unlike the minimizers of the functional in \eqref{azstcub112},  the equation \eqref{minsur_equation} may admit nonregular solutions, as observed in \cite{Degiorgi:1992}. For instance, if $n=1,$ $\Omega=(-1,1)$ and 
$$
g(s) 
= 
\begin{cases}
2 & \text{if $s\in(0,1),$}\\
-2 & \text{if $s\in(-1,0),$}
\end{cases}
$$
one can readily check that the functions
\begin{equation}\label{ahst6bnega}
u_{a,b}(s)=
\begin{cases}
\sqrt{2s-s^2} + a & \text{if $s\in(0,1),$}\\
0 & \text{if $s=0,$} \\
\sqrt{-2s-s^2} + b & \text{if $s\in(-1,0)$}
\end{cases}
\end{equation}
with $-1\le b\le a\le 1$ satisfy \eqref{minsur_equation} and minimize $\cF,$ with $\cF(u_{a,b})=4+\frac{\pi}{2}.$ 
However, any $u_{a,b}$ is not continuously differentiable at $s=0,$ even worse -- it has a jump if $a>b.$ 

To study regularity of minimizers of $\cF,$ in \cite{Degiorgi:1992} De Giorgi posed the following conjecture, which seems nontrivial even when $n=k=1.$

\begin{conjecture}\label{conj:dg_area}
For any $n,k\ge1,$ there exists $\sigma:=\sigma(n,k)>0$ such that for any open ball $B\subset\R^n$  and $g\in L^\infty(B;\R^k)$ with $\|g\|_\infty\le \sigma$ the following minimum is achieved:
\begin{equation}\label{min_problem_area}
\min \Big\{\int_B \sqrt{1+\sum |M_i(\nabla u)|^2} dx  + \int_B |u-g|dx:\,\,u\in C^1(B;\R^k)\Big\},
\end{equation}
where the sum is  taken over all minors $M_i(\nabla u)$  of the  Jacobian matrix $\nabla u $ of $u.$
\end{conjecture}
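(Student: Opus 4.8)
The plan is to prove Conjecture \ref{conj:dg_area} by a \emph{relaxation-plus-regularity} scheme: first enlarge the competitor class so that a minimizer exists, and then show that the smallness of $\|g\|_\infty$ forces that minimizer to be smooth, hence an admissible competitor in \eqref{min_problem_area}. Write $A(u):=\int_B\sqrt{1+\sum_i|M_i(\nabla u)|^2}\,dx$ for the Cartesian area (so that $1+\sum_i|M_i(\nabla u)|^2=\det(I_n+\nabla u^{\mathsf T}\nabla u)$ for smooth $u$) and $\cF(u):=A(u)+\int_B|u-g|\,dx$. Let $\overline{\cF}$ be the $L^1(B;\R^k)$-relaxation of $\cF$, defined on the natural ambient space: for $k=1$ this is $BV(B)$, with $\overline{\cF}(u)=\int_B\sqrt{1+|Du|^2}+\int_B|u-g|\,ds$; for $k\ge2$ it is the space of Cartesian currents $\mathrm{cart}(B\times\R^k)$ of Giaquinta--Modica--Sou\v{c}ek, where $\overline{A}$ is the mass of the associated current and may carry, besides the absolutely continuous graph part, a jump part, a Cantor part and a vertical part. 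Since the Cartesian area is $L^1$-lower semicontinuous at Lipschitz graphs, $\overline{\cF}=\cF$ on $C^1(B;\R^k)$, and $\overline{\cF}$ is $L^1$-lower semicontinuous; hence it suffices to establish: \textbf{(i)} $\overline{\cF}$ attains its minimum at some $u^\ast$; \textbf{(ii)} there is $\sigma=\sigma(n,k)>0$ such that $\|g\|_\infty\le\sigma$ forces $u^\ast\in C^1(B;\R^k)$ with no singular part. Then $\inf_{C^1}\cF\le\cF(u^\ast)=\overline{\cF}(u^\ast)=\min\overline{\cF}\le\inf_{C^1}\cF$, so all these quantities coincide and the minimum in \eqref{min_problem_area} is achieved by $u^\ast$.

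For \textbf{(i)}, coercivity follows from $\sqrt{1+\sum_i|M_i(\nabla u)|^2}\ge|\nabla u|$, whence $A(u)\ge\int_B|\nabla u|$, together with $\int_B|u|\le\int_B|u-g|+|B|\,\|g\|_\infty\le\cF(u)+|B|\,\|g\|_\infty$; so along a minimizing sequence the associated graph currents have equibounded mass and boundary mass. The closure theorem for Cartesian currents (for $k=1$, $BV$-compactness) produces a weak-$\ast$ subsequential limit $u^\ast$, and $L^1$-lower semicontinuity of $\overline{A}$ plus $L^1$-continuity of the fidelity term give $\overline{\cF}(u^\ast)\le\liminf\overline{\cF}(u_j)=\inf\overline{\cF}$. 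This step is classical for $k=1$ and available from the Acerbi--Dal Maso / Giaquinta--Modica--Sou\v{c}ek theory for $k\ge2$.

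For \textbf{(ii)}, first a truncation argument bounds the minimizer. Replacing a competitor $u$ by $\tau_R\circ u$, where $\tau_R$ is the nearest-point projection of $\R^k$ onto the closed ball $\overline{B_R}$ with $R=\|g\|_\infty$, does not increase $\overline{A}$ --- indeed $\nabla(\tau_R\circ u)=B\,\nabla u$ with $\|B\|_{\mathrm{op}}\le1$, so $\det(I_n+\nabla(\tau_R\circ u)^{\mathsf T}\nabla(\tau_R\circ u))\le\det(I_n+\nabla u^{\mathsf T}\nabla u)$ pointwise --- and it strictly decreases $\int_B|u-g|$ wherever $|u|>R$; passing to the relaxation yields $\|u^\ast\|_\infty\le\|g\|_\infty\le\sigma$. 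Consequently $u^\ast$ is a $\Lambda$-almost minimizer of the Cartesian area in $B\times\R^k$ with $\Lambda\le C(n)$, confined to the slab $B\times\overline{B_\sigma}$; comparing with the flat graph $u\equiv0$, its excess relative to the horizontal $n$-plane is bounded by a power of $\sigma$. One then invokes the interior regularity theory for almost area-minimizing surfaces: for $k=1$, De Giorgi's theorem for the non-parametric area functional together with the Bombieri--De Giorgi--Miranda interior gradient estimate upgrade $u^\ast$ to $C^{1,\alpha}$, and bootstrapping through \eqref{minsur_equation} to $C^{2,\alpha}$, excluding jump/Cantor/vertical parts; for $k\ge2$, the $\varepsilon$-regularity theory for almost-minimizing integral currents in higher codimension (Almgren; De Lellis--Spadaro) first confines the current to a small Lipschitz graph when the excess is small, then via the frequency/monotonicity machinery rules out the branch set, so $u^\ast$ is a smooth minimal-type graph.

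The \textbf{main obstacle} is \textbf{(ii)} for $n,k\ge2$. When $n=1$ or $k=1$ the integrand $\sqrt{1+\sum_i|M_i(\nabla u)|^2}$ is convex, the regularity theory is classical, and the smallness of $\|g\|_\infty$ is needed only to control the size of the minimizer --- essentially the situation treated in the present paper. For $n,k\ge2$, $\overline{A}$ is polyconvex but \emph{not} convex, the relaxed minimizer is a priori only a Cartesian current that may develop a vertical/singular part, and even the higher-codimension regularity theory for \emph{minimal} currents admits branch points; the whole argument hinges on showing that $\|g\|_\infty\le\sigma(n,k)$ makes the excess so small that the singular part of $u^\ast$ is absent and no branching occurs, through an $\varepsilon$-regularity statement for the constrained equation \eqref{minsur_equation} proved by a monotonicity-formula plus tilt-excess-decay scheme. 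Pinning down the ambient functional space in which this $\varepsilon$-regularity holds uniformly, and ruling out the vertical part of the optimal Cartesian current, is where I expect the real difficulty to lie.
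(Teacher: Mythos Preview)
The statement you are attempting is labelled and treated in the paper as an \emph{open conjecture}; the paper does not prove it. What the paper establishes is only a partial result for $n=k=1$ (Theorems~\ref{teo:minmi_dg} and~\ref{teo:regular_minimizers}), and even there with the concession that $\sigma$ depends on the length $|I|$ of the interval---see the explicit formula \eqref{ashfgghh} and the remark following \eqref{eq:explicit_sigma}. There is thus no ``paper's own proof'' to compare against; your outline must be judged on its own terms and against the one-dimensional argument the paper does carry out.

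As a strategy your plan is sound, and you correctly flag its own gap: step \textbf{(ii)} for $n,k\ge 2$ hinges on an $\varepsilon$-regularity theorem for almost-minimizing Cartesian currents in higher codimension that would simultaneously exclude the vertical part and rule out branching, and you do not supply this. That is precisely why the conjecture is open; as written, the proposal is a roadmap, not a proof.

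Two concrete corrections are needed even in codimension one. First, your bootstrap of $u^\ast$ to $C^{2,\alpha}$ via \eqref{minsur_equation} overshoots: the right-hand side of that inclusion is the sign of $u-g$, hence discontinuous, and the paper's Example~\ref{exa:C_1_1} exhibits a minimizer that is exactly $C^{1,1}\setminus C^2$, so $C^{1,1}$ is the sharp regularity. Second, your argument does not yield a threshold $\sigma$ depending only on $(n,k)$. The excess bound you invoke---confining the minimizer to the slab $B\times\overline{B_\sigma}$ and comparing with the flat graph---gives an excess of order $\sigma/r$ in a cylinder of radius $r$, so the scale at which $\varepsilon$-regularity kicks in still depends on $B$. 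The paper's approach has the same defect (its $\sigma$ depends on $|I|$); removing this domain dependence is part of the difficulty of the full conjecture, and neither your sketch nor the paper addresses it.

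Methodologically, for $n=k=1$ the paper takes a route different from the Bombieri--De Giorgi--Miranda gradient estimate you invoke. It shows directly (Proposition~\ref{prop:uniform_wulff_locmin}, via the isoperimetric-type Lemma~\ref{lem:rel_iso_inq}) that the subgraph of a $(\gamma,\Lambda)$-local minimizer with small $L^\infty$-norm satisfies a two-sided tangent $\norm$-ball condition of uniform radius $\alpha_0/\Lambda$; this forces the generalized outer normal to stay uniformly transverse to the vertical direction (inequality~\eqref{normal_change}), whence Lipschitz regularity of $u$ by \cite{NP:2002_narwa}, and $C^{1,1}$ when $\norm$ is $C^2$ and elliptic. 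The payoff of the paper's argument is that it applies to any partially monotone anisotropy whose Wulff shape has no vertical facets and produces explicit constants; your parametric-regularity route, while more familiar, would be confined to the Euclidean (or at best smooth elliptic) setting and is specific to $k=1$.
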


A variation of this conjecture for $n=1$ and $k\ge1$ has been recently addressed in \cite{CLSS:2025}: using the Sobolev regularity theory for the minimizers of an Ambrosio-Tortorelli-type functional \cite{AT:1990_cpam}, the authors have shown the existence of $\sigma:=\sigma(k,|I|^{-1/2})>0,$ such that for any $g\in L^\infty(I;\R^k)$ with $\|g\|_\infty\le \sigma,$ the minimum  problem
\begin{equation}\label{min_problem_area2_scala}
\min\Big\{\int_I \sqrt{1+|u'|^2}\,ds + \int_I (u-g)^2\,ds:\,\, u\in C^1(I;\R^k)\Big\}
\end{equation}
admits a unique solution, where $I$ is a bounded interval. This result does not solve Conjecture \ref{conj:dg_area}, due to the exponent $2$ in the 
second integral of the functional and to the dependence of $\sigma$ on the length $|I|$ of the interval $I.$ To prove the existence of solutions, they observe  that if $u\in \Wulff^{1,\infty}(I;\R^k)$  minimizes the $\Gamma$-limit $F$ of a suitable sequence of approximating functionals,  then it also minimizes the functional in \eqref{min_problem_area2_scala}, which turns out to be Sobolev regular provided that $\|g\|_\infty$ is small enough depending only on $|I|.$ Next, they show that $u$ is in fact a solution to the corresponding Euler-Lagrange equation with suitable boundary conditions, which yield the continuity of the derivative (here the 
quadratic term $(u-g)^2$ is important in the analysis of the Euler-Lagrange equation).

In the present paper we consider $n=k=1$ and generalize the functional in \eqref{min_problem_area} to the anisotropic case with $L^p$-fidelity terms. Given an anisotropy (a norm) $\norm$ in $\R^2,$ $p\in[1,+\infty),$ a bounded open interval $I\subset\R$ and $g\in L^\infty(I),$ we consider the functional  
\begin{equation}\label{G_phi_pgI}
\cG(u) = \int_I\norm^o(-Du,1) + \int_I|u-g|^pds,\quad u\in L^1(I),
\end{equation}
where $\norm^o$ is the dual of $\norm$ and 
$$
\int_I \norm^o(-Du,1):=\sup\Big\{\int_I (uh_1'+h_2)ds:\,\, (h_1,h_2)\in C_c^1(I;\R^2),\,\, \|\norm(h_1,h_2)\|_\infty\le 1\Big\}
$$
is the $\norm$-total variation of $(-Du,\sL^1)$ when $u\in BV(I).$ The main result of this paper reads as follows (see also Theorem \ref{teo:regular_minimizers}).

\begin{theorem}\label{teo:minmi_dg}
Let $\norm$ be an anisotropy in $\R^2$ such that 
the unit ball $\Wulff^\norm:=\{\norm\le1\}$ is symmetric with respect to the coordinate axes and does not have vertical facets. Let $I\subset\R $ be a bounded open interval.  Then there exists $\sigma:=\sigma(\norm,p,|I|)>0$ such that for any $g\in L^\infty(I)$ with $\|g\|_\infty< \sigma$ every minimizer of $\cG$ is Lipschitz in $I.$ Additionally, if $\norm$ is $C^2$ out of the origin and elliptic (see Definition \ref{def:elliptic_aniso}), then minimizers are $C^{1,1}$ in $I.$ 
\end{theorem}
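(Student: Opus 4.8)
The plan is to prove the result in several stages, relaxing the problem to the whole space $BV(I)$ first, then proving $L^\infty$ bounds on minimizers, then upgrading to Lipschitz regularity via a comparison/reflection argument, and finally bootstrapping to $C^{1,1}$ using the Euler--Lagrange equation in the elliptic case. First I would note that, since $\norm^o$ is a norm and the map $\xi \mapsto \norm^o(-\xi,1)$ has linear growth with the right coercivity (the absence of vertical facets of $\Wulff^\norm$ is exactly what makes the integrand superlinear in $|\xi|$ only at rate $1$ but still forces the recession function to control $|D^s u|$ with a strictly positive density), the functional $\cG$ is lower semicontinuous on $BV(I)$ with respect to $L^1$-convergence and coercive once we have an a priori $L^\infty$ bound; hence a minimizer in $BV(I)$ exists. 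The first genuine step is to show that when $\|g\|_\infty < \sigma$ the minimizer $u$ satisfies $\|u\|_\infty \le C(\norm,p)\,\|g\|_\infty$: compare $u$ with its truncations $u \wedge t$ and $u \vee (-t)$ for $t = \|g\|_\infty$; the total-variation term does not increase under truncation, while the fidelity term strictly decreases on the set $\{|u|>t\}$ unless that set is negligible, so $\|u\|_\infty \le \|g\|_\infty$. In particular the minimizer stays deep inside the region where $\Wulff^\norm$ is "smooth enough" for the next step.

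\textbf{Lipschitz bound.} The core of the argument is the Lipschitz estimate. Here the idea is a barrier/translation argument exploiting that, away from vertical facets, the graph of a $\cG$-minimizer cannot develop an arbitrarily steep slope without paying too much in area relative to what it could ever save in fidelity, given that $\|g\|_\infty$ is tiny. Concretely, I would argue by contradiction: if $u$ had unbounded difference quotients near some point, one could find, by a blow-up, a halfplane-type or wedge-type competitor; alternatively, and more in the spirit of De Giorgi-type one-dimensional arguments, one compares $u$ on a small interval $(s_0,s_0+h)$ with the affine function joining the (one-sided) traces of $u$ at the endpoints, or with a translate of a fixed circular/Wulff arc. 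Since the fidelity term is controlled by $2\sigma |I|$ (because $\|u-g\|_\infty \le 2\sigma$) while replacing a very steep piece of graph by such a competitor saves an amount of $\norm$-length that blows up as the slope blows up, steepness beyond a threshold $L(\norm,p,|I|,\sigma)$ is impossible; choosing $\sigma$ small makes $L$ finite, giving $u \in \Lip(I)$ with an explicit bound. This also rules out jumps ($D^s u = 0$), so $u \in W^{1,\infty}(I)$.

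\textbf{Upgrade to $C^{1,1}$.} Once $u$ is Lipschitz, on the open sets $\{u>g\}$ and $\{u<g\}$ it solves the anisotropic prescribed-curvature equation $\div \,\nabla\norm^o(-u',1) = \pm 1$ (the $\norm$-analogue of \eqref{minsur_equation}), and on $\{u=g\}$ it coincides with $g$; but since $u$ is continuous and $\|g\|_\infty<\sigma$, if $g$ itself is merely $L^\infty$ the contact set must be handled carefully — in fact I expect the argument shows the contact set has empty interior or that $u$ is smooth across it by the obstacle-problem regularity philosophy. On each region where $u\ne g$, ellipticity and $C^2$ smoothness of $\norm$ out of the origin make the equation uniformly elliptic on the Lipschitz graph (slopes are bounded by the previous step), so elliptic regularity gives $u \in C^{1,\alpha}_{\loc}$ and then $u'' \in L^\infty$ there, with a uniform bound; matching the one-sided second derivatives (or rather $u'$) across the free boundary $\partial\{u=g\}$, where both one-sided limits of $u'$ equal the slope of $g$ in a weak sense, yields $u \in C^{1,1}(I)$ globally. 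The main obstacle I anticipate is precisely the Lipschitz step: making rigorous that "no vertical facets" quantitatively converts small $\|g\|_\infty$ into a finite slope bound, and doing so with a constant $\sigma$ depending only on $\norm,p,|I|$ and not on finer features of $g$; the truncation and barrier comparisons are the tools, but the bookkeeping of how much $\norm$-length is saved by flattening a steep arc — uniformly in the location and in the unknown traces of $u$ — is where the real work lies.
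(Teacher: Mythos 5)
Your existence and $L^\infty$ steps match the paper (Lemma \ref{lem:exist_unique}), but the core of your Lipschitz argument rests on a claim that is false: replacing a steep piece of the graph by the chord joining its traces, or by a Wulff arc, does \emph{not} save an amount of $\norm$-length that blows up with the slope. Segments of arbitrary slope --- and indeed arbitrary monotone graphs, jumps included --- are exact local minimizers of $\cA_\norm$ for partially monotone anisotropies (see Theorem \ref{teo:0_loc_minos} and the example $u_{a,b}$ in \eqref{ahst6bnega}, which is a genuine minimizer with a jump when $\|g\|_\infty$ is large). The recession part of the integrand charges a jump only the finite amount $\norm^o(\be_1)|u^+-u^-|$, and with prescribed traces on a subinterval this cost cannot be avoided by any competitor; so the comparison you propose yields no contradiction however steep $u$ is, and no threshold $L(\norm,p,|I|,\sigma)$ comes out of it. The smallness of $\|g\|_\infty$ must enter through a different, genuinely geometric mechanism. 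In the paper this is done by showing the subgraph is a $(\gamma,\Lambda)$-minimizer of $\cP_\norm$ with $\Lambda$ controlled by $\sigma$ and $p$ (Proposition \ref{prop:cond_locminimi}), proving via an isoperimetric-type comparison (Lemma \ref{lem:rel_iso_inq}) a \emph{uniform} interior and exterior tangent Wulff-ball condition at every graph point with radius up to $\alpha_0/\Lambda$ (Proposition \ref{prop:uniform_wulff_locmin}), and then observing that a nearly horizontal normal would force the tangent ball --- hence the graph --- to make a vertical excursion of order $\alpha_0/(\Lambda\norm(\be_2))$ or $|I|\norm(\be_1)/\norm(\be_2)$, contradicting $\|u\|_\infty\le\|g\|_\infty<\sigma$ (Corollary \ref{cor:lip_regular_funcos}). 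Note also that ``no vertical facets'' is not a coercivity/superlinearity condition on the integrand (the growth is always linear); its role is that $\pm\be_1$ is a normal to $\Wulff^\norm$ only at $\pm\be_1/\norm(\be_1)$, which pins the contact point of the tangent ball near its horizontal extreme and makes the vertical-excursion argument work.

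Your $C^{1,1}$ step is also shakier than it looks: $g$ is merely $L^\infty$, so one cannot speak of the ``slope of $g$'' on the contact set, and the obstacle-problem matching of one-sided derivatives across $\p\{u=g\}$ has no meaning at that regularity; the differential inclusion alone does not obviously self-improve along the route you sketch (though in 1D one could salvage it: the weak Euler--Lagrange equation shows $s\mapsto\p_{\xi_1}\norm^o(-u'(s),1)$ is Lipschitz with constant $p(2\sigma)^{p-1}$, and ellipticity plus the slope bound then gives $u'$ Lipschitz). The paper avoids this altogether: once the uniform two-sided tangent $\norm$-ball condition is available, ellipticity converts it (Proposition \ref{prop:elliptic_anis_propo}) into a classical two-sided ball condition of uniform radius, which directly yields the $L^\infty$ bound on $u''$ and hence $C^{1,1}$, independently of any regularity of $g$.
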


In the Euclidean case $\norm=|\cdot|,$ Theorem \ref{teo:minmi_dg} provides a positive solution to Conjecture  \ref{conj:dg_area} for $n=k=1,$ except that  our $\sigma$ depends on $|I|$ (as in \cite{CLSS:2025}); at the same time we gain an extra regularity of minimizers. 

To prove Theorem \ref{teo:minmi_dg}, we begin by observing that if $g$ is bounded, then  every minimizer $u$ of $\cG$ is also bounded, with $\|u\|_\infty\le \|g\|_\infty$ (see Lemma \ref{lem:exist_unique}). If, additionally, $\norm$ is even in each coordinate  (equivalently, $W^\norm$ is symmetric with respect to the coordinate axes), then the subgraph and the epigraph of $u$ are  $(\gamma,\Lambda)$-local minimizers of the $\norm$-perimeter, in the sense of Definition \ref{def:cond_locmin} below, with  suitable constants $\gamma,\Lambda$ (Proposition \ref{prop:cond_locminimi}). Next in Proposition \ref{prop:uniform_wulff_locmin}, we prove that if $u$ satisfies an appropriate $L^\infty$-bound depending only on $\Lambda$ and $|I|,$ a tangent $\norm$-ball condition  holds at each point $x$ on the graph of $u$ for all radii $r$ up to $\frac{\alpha_0}{\Lambda}>0$ for a constant $\alpha_0>0$ depending only on $\norm.$ 
The uniformity of the radii of these tangent balls allows to estimate the deviation of the  generalized normals of the reduced boundary of the subgraph of $u$ in $I\times \R$ from the vertical direction (see \eqref{normal_change}) which is away from $0$ provided that $\|u\|_\infty$ is small enough depending only on $\norm, $ $\Lambda$ and $|I|.$ In particular, this observation and \cite[Lemma 3.10]{NP:2002_narwa} imply that $u$ is Lipschitz in $I$ (Corollary \ref{cor:lip_regular_funcos}). Finally, an explicit choice of $\sigma$ is made, using the previous $L^\infty$-bounds for $u$  (Theorem \ref{teo:regular_minimizers}). When $\norm$ is $C^2$ and elliptic, then the tangent $\norm$-ball condition becomes  equivalent to the classical tangent ball condition, and hence $u$ must be $C^{1,1}$ in $I.$

Note that the function $u_{a,b}$ in \eqref{ahst6bnega} shows that in case $\|g\|_\infty$ is large, the validity of a tangent ball condition may not suffice for the regularity of minimizers of $\cF.$

When $\Lambda=0$ and $\gamma=+\infty,$ $(\gamma,\Lambda)$-local minimizers coincide with  
classical local minimizers. In this case, assuming $\norm$ is symmetric with respect 
to the 
coordinate axes, we can characterize all possible Cartesian 
local minimizers of the $\norm$-perimeter (see Theorem \ref{teo:0_loc_minos}).

The paper is organized as follows. In Section \ref{sec:preliminary} 
we introduce some preliminaries on 
anisotropies, $\Lambda$-local minimizers, and 
$\norm$-ball condition for Cartesian $\Lambda$-local minimizers. In Section \ref{sec:class_local_mini} we provide a characterization of local minimizers. Some regularity properties of $\Lambda$-local minimizers and 
their further  generalizations are studied in Section \ref{sec:regul_lambda_min}. Finally, we prove Theorem \ref{teo:minmi_dg} in Section \ref{sec:application}.

\subsection*{Acknowledgements}
The first author acknowledges the support of the INDAM/GNAMPA. The second author acknowledges support from the Austrian Science Fund (FWF) Stand-Alone project P 33716. The authors are very grateful to Umberto Massari, whose  papers, as well as his book on minimal surfaces written in collaboration with M. Miranda, have been a source of lasting mathematical inspiration. 

\section{Some preliminaries}\label{sec:preliminary}

In what follows, by $\sL^m$ (typically for $m=1,2$) and $\sH^t$ we denote the Lebesgue measure in $\R^m$ and the $t$-dimensional Hausdorff measure in $\R^2.$ Depending on the context, we use  $|\cdot|$ to denote the Euclidean norm of a vector in $\R^2,$ the length of a bounded interval on $\R$ and the measure of a (Lebesgue) measurable set in $\R^m$ for $m=1,2.$ The scalar product in $\R^2$ is indicated by $\scalarp{\cdot}{\cdot}.$ The 
symmetric difference of sets $A$ and $B$ 
is  denoted 
$A\Delta B.$ The symbol $B_r(x)$ stands for the Euclidean ball in $\R^2$ centered at $x$ and of radius $r>0.$ The topological closure, interior and boundary of $E\subset\R^2$ will be denoted by $\cl{E},$ $\mathring{E}$ and $\p E,$ respectively. Given an open interval $I\subset\R,$ we write $\cO(I)$ and $\cO_b(I)$ to denote the collection of all open and all bounded open subsets of $I,$ respectively.

\subsection{Anisotropies}

Let $\norm:\R^2\to[0,+\infty)$ be an anisotropy, i.e., a positively one-homogeneous even convex function with 
\begin{equation}\label{norm_bounds}
c\le \norm\le \frac{1}{c} \quad\text{on the unit circle $\S^1$}
\end{equation}
for some $c\in(0,1].$
We denote by $\norm^o$ the dual of $\norm,$ defined as
$$
\norm^o(\xi) = \max_{\norm(\eta)=1}\,\scalarp{\xi}{\eta},
$$
which is also an anisotropy in $\R^2.$ We say that $\norm$ is a $C^k$-anisotropy for some $k\ge1$ provided that $\norm\in C_\loc^k(\R^2\setminus \{0\}).$

The unit $\norm$-ball $\Wulff^\norm:=\{\norm\le1\}$ is sometimes called the Wulff shape of $\norm.$ We also introduce  the Wulff shape  of radius $r$ centered at $x$ as $\Wulff_r^\norm(x):=\{\norm(\cdot-x)\le r\};$ clearly $\oWulff_r^\norm(x)=\{\norm(\cdot-x)<r\}.$ Given $\eta \in \p \Wulff^\norm,$ we call any vector $\nu\in \p\norm^o(\eta)$ a \emph{normal} to $\Wulff^\norm$ at $\eta,$ where $\p $ is the subdifferential. Note that if $\Wulff^\norm$ is not regular at $\eta,$ for instance, it has a corner, its set of 
normals at $\eta$ forms a nonempty closed convex cone.

We write 
$\dist_\norm(x,S):=\inf\{\norm(x-y):\,\,y\in S\}$ to denote the $\norm$-distance function from a nonempty set $S.$

\begin{definition}[Elliptic anisotropy]\label{def:elliptic_aniso}
An anisotropy $\norm$ in $\R^2$ is \emph{elliptic} provided that there exists $\lambda>0$ such that $\phi - \lambda|\cdot|$  is also an anisotropy in $\R^2.$
\end{definition}

For instance, any anisotropy induced by some positive definite quadratic form, is elliptic.  The following proposition can be found in \cite[Appendix A]{Kholmatov:2024_ifb} and provides a characterization of elliptic $C^2$-anisotropies. 

\begin{proposition}\label{prop:elliptic_anis_propo}
For any $C^2$-anisotropy the following assertions are equivalent:
\begin{itemize}
\item[(a)] $\norm$ is elliptic;

\item[(b)] $\norm^o$ is $C^2$ and elliptic;

\item[(c)] there exists $\bar r\in(0,1)$ such that for any $z\in\p  \Wulff^\norm$ there exist $x_z,y_z\in\R^2$ such that 
\[
B_{\bar r}(x_z)\subset \Wulff^\norm \subset \cl{B_{1/\bar r}(y_z)}
\qquad\text{and}\qquad 
\p B_{\bar r}(x_z)\cap \p \Wulff^\norm =\p B_{1/\bar r}(y_z)\cap \p  \Wulff^\norm = \{z\}.
\]
\end{itemize} 
\end{proposition}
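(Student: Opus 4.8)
The plan is to reformulate each of (a), (b), (c) as a condition on the curvature of the convex curve $\p\Wulff^\norm$ and to exploit the Legendre duality between $\norm$ and $\norm^o$ carried by $F:=\tfrac12\norm^2$ and $F^*=\tfrac12(\norm^o)^2$. To this end, note first that since $\norm\in C^2(\R^2\setminus\{0\})$ and $\scalarp{\nabla\norm(x)}{x}=\norm(x)>0$ for $x\ne0$, the gradient $\nabla\norm$ is nonvanishing off the origin, so $\p\Wulff^\norm=\{\norm=1\}$ is a compact embedded $C^2$ curve bounding the convex body $\Wulff^\norm$; it carries a continuous, nonnegative curvature $\kappa$, hence $\kappa\le M<+\infty$. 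With $F=\tfrac12\norm^2$ one has $\nabla F=\norm\nabla\norm$, $D^2F=\nabla\norm\otimes\nabla\norm+\norm\,D^2\norm$, and classically $F^*=\tfrac12(\norm^o)^2$. A direct computation from $1$-homogeneity (so $D^2\norm(x)x=0$) gives, for $x\in\S^1$ and $\tau\perp x$ with $|\tau|=1$,
\begin{equation}\label{eq:plan-det}
\det D^2F(x)=\norm(x)^3\,D^2\norm(x)[\tau,\tau];
\end{equation}
since $D^2F$ is positive semidefinite ($F$ being convex) and $0$-homogeneous, it is positive definite on $\R^2\setminus\{0\}$ iff $D^2\norm(x)[\tau,\tau]>0$ for all such $x,\tau$, iff (by continuity on the compact set $\{(x,\tau)\in\S^1\times\S^1:\tau\perp x\}$) $D^2\norm(x)[\tau,\tau]\ge\lambda>0$ there for some $\lambda$; as $D^2(|\cdot|)(x)[\tau,\tau]=1$, this is precisely (a). A standard level-set computation gives, moreover, on $\p\Wulff^\norm$,
\begin{equation}\label{eq:plan-curv}
\kappa(x)=\frac{D^2\norm(x)[\tau,\tau]}{|\nabla\norm(x)|^{3}}\qquad(\tau\perp x,\ |\tau|=1),
\end{equation}
whose numerator is, up to the strictly positive continuous factor $|\nabla\norm(x)|^{-3}$, the quantity in (a); hence
\begin{equation}\label{eq:plan-ell}
\text{(a)}\ \Longleftrightarrow\ \inf_{\p\Wulff^\norm}\kappa>0\ \Longleftrightarrow\ 0<m\le\kappa\le M<+\infty\ \text{ on }\p\Wulff^\norm .
\end{equation}

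Next I would prove (a) $\Leftrightarrow$ (b). Assuming (a), by \eqref{eq:plan-det} together with $D^2\norm[\tau,\tau]\le\Lambda<\infty$ (automatic since $\norm\in C^2$) the eigenvalues of $D^2F$ stay in a compact subinterval of $(0,+\infty)$ on $\R^2\setminus\{0\}$; so $F$ is strictly convex and coercive, $\nabla F$ is a $C^1$-diffeomorphism of $\R^2\setminus\{0\}$ onto itself, and $F^*\in C^2(\R^2\setminus\{0\})$ with $D^2F^*(\xi)=(D^2F(\nabla F^*(\xi)))^{-1}$, whose eigenvalues likewise lie in a compact subinterval of $(0,+\infty)$. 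Thus $\norm^o=\sqrt{2F^*}$ is a $C^2$-anisotropy, and applying \eqref{eq:plan-det} to $\norm^o$ shows $D^2\norm^o[\tau',\tau']\ge\lambda'>0$ on $\S^1$, i.e. $\norm^o$ is elliptic; this is (b). The converse is the same argument with $\norm\leftrightarrow\norm^o$, using $(\norm^o)^o=\norm$.

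Then I would prove (a) $\Leftrightarrow$ (c): by \eqref{eq:plan-ell} it suffices to show that, for the compact $C^2$ convex curve $\p\Wulff^\norm$, condition (c) is equivalent to $0<m\le\kappa\le M<+\infty$. Given the latter, choose $\bar r\in(0,1)$ with $\bar r<\min\{m,1/M\}$ and set $x_z:=z-\bar r\nu_z$, $y_z:=z-\tfrac1{\bar r}\nu_z$ for $z\in\p\Wulff^\norm$ with outer unit normal $\nu_z$; since $\kappa<1/\bar r$ everywhere, the interior rolling-ball theorem (Blaschke) gives $B_{\bar r}(x_z)\subset\Wulff^\norm$, since $\kappa>\bar r=(1/\bar r)^{-1}$ the exterior version gives $\Wulff^\norm\subset\cl{B_{1/\bar r}(y_z)}$, and the strict curvature inequalities force each contact to be a single point, which is (c). Conversely, if (c) holds with some $\bar r\in(0,1)$, a second-order comparison of $\p\Wulff^\norm$ with $\p B_{\bar r}(x_z)$ at the shared point $z$ gives $\kappa(z)\le1/\bar r$, and with $\p B_{1/\bar r}(y_z)$ gives $\kappa(z)\ge\bar r$, so $\bar r\le\kappa\le1/\bar r$ on $\p\Wulff^\norm$ and (a) follows from \eqref{eq:plan-ell}.

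The main obstacle will be the self-duality (a) $\Leftrightarrow$ (b) --- that ``$C^2$ and elliptic'' is preserved by $\norm\mapsto\norm^o$ --- which the plan reduces to the Hessian reciprocity $D^2F^*=(D^2F)^{-1}$; the delicate feature is that the radial direction is always degenerate, so the relevant nondegeneracy, and the identities \eqref{eq:plan-det}--\eqref{eq:plan-curv}, must be read on $\S^1$ (respectively on $\p\Wulff^\norm$). The remaining ingredients --- the level-set curvature formula \eqref{eq:plan-curv} and the interior/exterior rolling-ball theorems together with their converses and single-contact refinements --- are classical but require some bookkeeping; note that the strict inequalities $\bar r<m$ and $\bar r<1/M$, i.e. the slack left by the open interval $(0,1)$ in (c), are precisely what yield the unique contact point.
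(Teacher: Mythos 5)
The paper does not actually prove this proposition — it is quoted from \cite[Appendix A]{Kholmatov:2024_ifb} — so there is no internal argument to compare with; your proposal supplies a self-contained proof along a natural route (tangential Hessian $\leftrightarrow$ boundary curvature, Legendre duality $D^2F^*=(D^2F)^{-1}$ for (a)$\Leftrightarrow$(b), Blaschke rolling balls for (a)$\Leftrightarrow$(c)), and the overall strategy is sound: \eqref{eq:plan-det} is correct, the identification of (a) with a uniform positive lower bound on $D^2\norm(x)[\tau,\tau]$ over $\{x\in\S^1,\ \tau\perp x,\ |\tau|=1\}$ is right, and the duality step via $F=\tfrac12\norm^2$, $F^*=\tfrac12(\norm^o)^2$ works as you describe.

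Two points need repair, both fixable. First, \eqref{eq:plan-curv} is not correct as stated: for $z\in\p\Wulff^\norm$ and unit $\tau\perp z$ the curvature is $\kappa(z)=D^2\norm(z)[\tau,\tau]\,/\,\bigl(|z|^2\,|\nabla\norm(z)|^3\bigr)$ (equivalently $\kappa=D^2\norm[\hat t,\hat t]/|\nabla\norm|$ with $\hat t$ the unit tangent to the level set, which is perpendicular to $\nabla\norm(z)$, not to $z$); the test case $\norm=|\cdot|/R$ shows your formula returns $R$ instead of $1/R$. This does not damage \eqref{eq:plan-ell}, since $|z|$ and $|\nabla\norm(z)|$ are bounded above and away from zero on the compact set $\p\Wulff^\norm$ (recall $\scalarp{\nabla\norm(z)}{z}=1$ there), but the constant-tracking should be redone with the corrected formula. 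Second, the ``single contact point'' refinement of the rolling-ball theorem is asserted rather than proved, and the naive second-order comparison at a putative second contact point $w$ only yields the non-strict inequality $\kappa(w)\le 1/\bar r$, which is no contradiction; you should justify uniqueness, e.g.\ via the support function $h$ of $\Wulff^\norm$ (which is $C^2$ with $h''+h=1/\kappa$ once $\kappa>0$): for the inner ball, $g:=h-h_{B_{\bar r}(x_z)}$ satisfies $g\ge0$, $g(\theta_z)=0$ and $g''+g\ge 1/M-\bar r>0$, and if $g$ vanished at a second angle one could test against $\sin$ on the shorter arc (of length at most $\pi$) between the two zeros and reach a contradiction by a Sturm-type comparison; the outer ball is handled symmetrically using $\bar r<m$. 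With these two adjustments your argument is complete.
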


\noindent 
Another interesting class of anisotropies is introduced in \cite[Section 4]{BKhN:2017_cpaa}:

\begin{definition}
We say an anisotropy $\norm$ is \emph{partially monotone} if
$$
(x_1,y_1),(x_2,y_2)\in\R^2,\quad |x_1|\le |x_2|,\,\,\,|y_1|\le |y_2|\qquad \Longrightarrow\quad \norm(x_1,y_1)\le \norm(x_2,y_2).
$$
\end{definition}

According to \cite[Appendix A]{BKhN:2017_cpaa} the following statements are equivalent: 
\begin{itemize}
\item $\norm$ is partially monotone; 
\item $\norm^o$ is partially monotone;
\item $\norm(x_1,x_2)=\norm(|x_1|,|x_2|)$ for all $x_1,x_2\in\R.$ 
\end{itemize}
Thus, $\norm$ is partially monotone if and only if it is even in each coordinates separately.  Equivalently, $\norm$ is partially monotone if and only if its Wulff shape $\Wulff^\norm$ is symmetric with respect to the coordinates axes.

\subsection{Anisotropic total variation and perimeter}

Let $\norm$ be an anisotropy in $\R^2$ and $I\subseteq\R$ be an open interval. Recall that a function $u:I\to\R$ has locally bounded variation in $I,$ and we write $u\in BV_\loc(I),$ if its distributional derivative $Du$ is a Radon measure in $I.$ If, additionally, 
$u\in L^1(I)$  and $Du$ is a bounded Radon measure in 
$I$, then $u$ is called a function of bounded variation and is denoted by $u\in BV(I).$ 

Given $u\in BV_\loc(I),$ the anisotropic area of the graph of $u\in BV_\loc(I)$ is  defined by the $\norm$-total variation of the Radon measure $(-Du,\sL^1)$ in an open set $J\strictlyincluded I$ as  
\begin{equation*}
\cA_\norm(u,J):=\int_J \norm^o(-Du,\sL^1):= \sup\Big\{\int_{J} (uh_1' + h_2)ds:\,\, (h_1,h_2)\in C_c^1(J;\R^2),\,\,\|\norm(h_1,h_2)\|_\infty\le 1\}.
\end{equation*}
When $u\in \Wulff_\loc^{1,1}(I),$ the Radon-Nikodym theorem implies
$$
\cA_\norm(u,J) = \int_J \norm^o(-u',1)ds,
$$
and hence, in the case of the Euclidean anisotropy,
$$
\cA_{|\cdot|}(u,J) = \int_J \sqrt{1+{u'}^2}\,ds.
$$
Note that  \cite[p. 390]{Dalmaso:1980}
\begin{align}\label{relaxing}
\cA_\norm(u,I) = &\int_I \norm^o(-u',1)ds + \norm^o(D^su,0)(I) \nonumber \\
=  &\int_I \norm^o(-u',1)ds + \sum_{x\in J_u}  \norm^o(\be_1)|u^+(x)-u^-(x)| + \norm^o(D^cu,0)(I),  
\end{align}
where $D^su$ and $D^cu$ are the singular part of $Du$ 
with respect to $\sL^1$ and the Cantor part respectively, 
$J_u$ is the jump set of $u,$ $u^\pm(x)$ are the right and left traces of $u$ at $x$ and 
$$
\norm^o(\mu,0)(I) = \sup\Big\{\int_I 
\eta \,d\mu:\,\, \eta\in C_c(I),\,\,\|\norm(\eta,0)\|_\infty\le 1\Big\}
$$ 
is the partial $\norm$-total variation of a Radon measure $\mu$
in $I$.

A measurable set $E\subset\R^2$ is called of locally finite perimeter in an open set $\Omega\subseteq\R^2,$ and denoted as $E\in BV_\loc(\Omega;\{0,1\}),$ provided that the distributional derivative  $D\chi_E$ of its characteristic function $\chi_E$ is a Radon measure in $\Omega.$ If, additionally, $D\chi_E$ is a bounded Radon measure
in $\Omega$, then $E$ has finite perimeter. We denote by $\p^*E$ 
and $\nu_E$ the reduced boundary and the generalized outer unit normal 
of $E,$ respectively. If $\chi_E\in BV(\R^2),$ we write $E\in BV(\R^2;\{0,1\}).$ 
We refer for instance 
to \cite{AFP:2000, Giusti:1980, Maggi:2012, MM:1984_book} for more information on $BV$-functions and sets of finite perimeter.

We define the $\norm$-perimeter of $E$ in the open set $\Omega\subseteq\R^2$ as
$$
\cP_\norm(E,\Omega) = \int_{\Omega\cap \p^*E} \norm^o(\nu_E)d\sH^1.
$$
We also set $\cP_\norm(E):=\cP_\norm(E,\R^2).$

For a function  $u:I\to\R$ we write 
$$
\subgraph(u):=\{(s,t)\in I\times \R:\,\, \,u(s)>t\}\subset \R^2
$$
to denote the (strict) subgraph of $u$ (sometimes called hypograph). There is a natural connection between the anisotropic area of the graph and the anisotropic perimeter of the subgraph. 

\begin{lemma}[{\cite{Dalmaso:1980}}]\label{lem:area_equal_perimeter}
$u\in BV_\loc(I)$ if and only if $\subgraph(u)$ has locally finite perimeter in $I\times\R.$ Moreover, in either case, for any $J\strictlyincluded I,$
$$
\cA_\norm(u,J) =  \cP_\norm(\subgraph(u),J\times\R).
$$
\end{lemma}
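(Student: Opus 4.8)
The plan is to prove the quantitative identity
$$
\cA_\norm(u,J)=\cP_\norm(\subgraph(u),J\times\R),\qquad J\strictlyincluded J'\strictlyincluded I,\quad u\in BV(J'),
$$
as an equality in $[0,+\infty]$, and to read off the ``if and only if'' from it. Both sides carry a dual description: by definition $\cA_\norm(u,J)$ is the supremum of $\int_J(uh_1'+h_2)\,ds$ over $(h_1,h_2)\in C_c^1(J;\R^2)$ with $\|\norm(h_1,h_2)\|_\infty\le1$, while the anisotropic Gauss--Green formula gives
$$
\cP_\norm(\subgraph(u),J\times\R)=\sup\Big\{\int_{\subgraph(u)}\div\Phi\,dx:\ \Phi\in C_c^1(J\times\R;\R^2),\ \|\norm(\Phi)\|_\infty\le1\Big\},
$$
the right-hand side being $+\infty$ precisely when $\subgraph(u)$ has infinite perimeter in $J\times\R$. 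So the identity reduces to comparing two suprema.

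For $\cA_\norm(u,J)\le\cP_\norm(\subgraph(u),J\times\R)$ I would associate to each competitor $(h_1,h_2)$ of the first supremum a competitor of the second with the same value. Since $u\in BV(J')\subset L^\infty(J')$, pick $\psi\in C_c^1(\R)$ with $0\le\psi\le1$ and $\psi\equiv1$ on an interval containing the values of $u$ on $\supp h_1\cup\supp h_2$, and set $\Phi(s,t):=\psi(t)\,(h_1(s),h_2(s))\in C_c^1(J\times\R;\R^2)$; then $\|\norm(\Phi)\|_\infty\le\|\psi\|_\infty\,\|\norm(h_1,h_2)\|_\infty\le1$ by one-homogeneity of $\norm$. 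A Fubini computation yields $\int_{\subgraph(u)}\div\Phi=\int_J\big(h_1'\,\Psi(u)+h_2\,\psi(u)\big)\,ds$ with $\Psi(\tau):=\int_{-\infty}^{\tau}\psi$, and since $\psi\equiv1$ on the range of $u$ over $\supp h_1\cup\supp h_2$ and $\int_J h_1'\,ds=0$, the right-hand side equals $\int_J(uh_1'+h_2)\,ds$. Passing to the supremum over $(h_1,h_2)$ gives the inequality.

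For the reverse inequality I first treat the smooth case: if $u\in C^1(J')$, then $\p\,\subgraph(u)\cap(J\times\R)$ is the graph of $u$ with outer normal $(-u',1)/\sqrt{1+|u'|^2}$, and parametrising by $s$ and using one-homogeneity of $\norm^o$ gives $\cP_\norm(\subgraph(u),J\times\R)=\int_J\norm^o(-u',1)\,ds=\cA_\norm(u,J)$. For general $u\in BV(J')$ I take a mollification $u_k$, which converges to $u$ in $L^1_\loc(J')$ area-strictly, i.e. $\cA_{|\cdot|}(u_k,J'')\to\cA_{|\cdot|}(u,J'')$ for $J''\strictlyincluded J'$. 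By Reshetnyak's continuity theorem — applicable because $\norm^o$ is continuous and positively one-homogeneous — it follows that $\cA_\norm(u_k,J)\to\cA_\norm(u,J)$ for every $J$ with $|Du|(\p J)=0$, and for the remaining $J$ by inner approximation. On the other hand $\|\chi_{\subgraph(u_k)}-\chi_{\subgraph(u)}\|_{L^1(K\times\R)}=\|u_k-u\|_{L^1(K)}\to0$ for $K\strictlyincluded J'$, so lower semicontinuity of $E\mapsto\cP_\norm(E,J\times\R)$ together with the smooth case gives
$$
\cP_\norm(\subgraph(u),J\times\R)\le\liminf_k\cP_\norm(\subgraph(u_k),J\times\R)=\liminf_k\cA_\norm(u_k,J)=\cA_\norm(u,J).
$$
This establishes the identity, and with it the ``moreover''; the equivalence now follows from \eqref{norm_bounds} and the coarea formula. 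Indeed, if $u\in BV_\loc(I)$ then $\cP_\norm(\subgraph(u),J\times\R)=\cA_\norm(u,J)<\infty$ for every $J\strictlyincluded I$, so $\subgraph(u)$ has locally finite perimeter in $I\times\R$; conversely, slicing $D\chi_{\subgraph(u)}$ along lines parallel to $\be_1$ gives $\int_\R|D\chi_{\{u>t\}}|(J)\,dt=|D_{\be_1}\chi_{\subgraph(u)}|(J\times\R)\le|D\chi_{\subgraph(u)}|(J\times\R)$, so finiteness of the perimeter on the strips $J\times\R$ ($J\strictlyincluded I$) forces $u\in BV(J)$, hence $u\in BV_\loc(I)$.

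I expect the main obstacle to be the passage from $C^1$ to general $u\in BV(J')$ in the reverse inequality: one must (i) ensure the approximants converge area-strictly, (ii) apply a Reshetnyak-type continuity theorem to the non-quadratic, merely continuous, one-homogeneous integrand $\norm^o$ — this is exactly what keeps the anisotropic case as cheap as the Euclidean one — and (iii) reconcile it with lower semicontinuity of the $\norm$-perimeter over the \emph{unbounded} strip $J\times\R$, checking that no mass of $D\chi_{\subgraph(u_k)}$ escapes vertically. An alternative that bypasses Reshetnyak, at the cost of Vol'pert's $BV$ chain rule for $s\mapsto\int_{-\infty}^{u(s)}\Phi^1(s,t)\,dt$ in order to integrate $\int_{\subgraph(u)}\div\Phi$ by parts directly, would settle both inequalities at once; in either case the analytic weight of the lemma rests at this step.
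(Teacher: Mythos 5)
The paper does not prove this lemma at all: it is quoted from Dal Maso (1980), going back to Miranda's classical subgraph--perimeter correspondence, so there is no ``paper proof'' to match. Your argument is a correct and essentially self-contained reconstruction of that classical result: the duality inequality $\cA_\norm(u,J)\le\cP_\norm(\subgraph(u),J\times\R)$ via the test field $\Phi(s,t)=\psi(t)(h_1(s),h_2(s))$ is sound (the Fubini computation with $\Psi(u)=u+\mathrm{const}$ on $\supp h_1$ and $\int_J h_1'\,ds=0$ is exactly right, using that $BV$ functions of one variable are locally bounded), and the reverse inequality via the smooth case, area-strict convergence of mollifications, Reshetnyak continuity for the one-homogeneous integrand $\norm^o$, and lower semicontinuity of the (dual formulation of the) $\norm$-perimeter is a standard and legitimate route; your worry (iii) about vertical loss of mass is harmless precisely because you only use lower semicontinuity on the strip and continuity on the interval.

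Two small points deserve attention. First, in the converse direction you pass from finiteness of $\int_\R |D\chi_{\{u>t\}}|(J)\,dt$ directly to $u\in BV(J)$ by coarea; the coarea-type converse needs $u\in L^1(J)$ (or local boundedness) first. This is easy to supply: if $u$ were essentially unbounded above (or below) on $J$, then $|\{u>t\}\cap J|$ would be strictly between $0$ and $|J|$ for all $t$ in an unbounded interval, forcing $P(\{u>t\},J)\ge 1$ there and making the integral infinite; so strip-finiteness gives $u\in L^\infty(J)$ and then coarea applies. Second, note that you have tacitly (and correctly) interpreted ``locally finite perimeter in $I\times\R$'' as finiteness of the perimeter on all strips $J\times\R$, $J\strictlyincluded I$, rather than on compact subsets of $I\times\R$ as the paper's literal definition of $BV_\loc(I\times\R;\{0,1\})$ would suggest; with the compact-subsets reading the ``if'' direction fails (e.g.\ $u(s)=|s|^{-1/2}$ on $(-1,1)$ has a subgraph of finite perimeter in every compact subset of $(-1,1)\times\R$ but is not in $BV_\loc$), so your strip reading is the one consistent with the cited classical statement and with how the lemma is used in the paper.
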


\subsection{Local minimizers} 
In this section we recall the notion of $\Lambda$-local minimizer.

\begin{definition}
Let $\norm$ be an anisotropy on $\R^2$ and $\Lambda\ge0.$

\begin{itemize}
\item We call a function $u\in BV_\loc(I)$ a $\Lambda$-local minimizer of $\cA_\norm$ in $I$ if
$$
\cA_\norm(u,J) \le \cA_\norm(u+\psi,J) + \Lambda\int_J|\psi|ds
$$
whenever $J\in \cO_b(I)$ and $\psi\in C_c^1(J).$ 

\item For an open set $\Omega\subseteq\R^2$ and $\Lambda\ge0,$ we call a set $E\in BV_\loc(\Omega;\{0,1\})$ a $\Lambda$-local minimizer of $\cP_\norm$ in $\Omega$ if 
$$
\cP_\norm(E,\Omega') \le \cP_\norm(F,\Omega') + \Lambda|E\Delta F|
$$
for any open $\Omega'\strictlyincluded \Omega$ and $F\in BV_\loc(\Omega;\{0,1\})$ with $E\Delta F\strictlyincluded \Omega'.$
\end{itemize}
When $\Lambda=0,$ following the literature, we shortly call $u$ (resp. 
$E$) a local minimizer.
\end{definition} 

By approximation, one can show that $u\in BV_\loc(I)$ 
is a $\Lambda$-local minimizer of $\cA_\norm$ in $I$ if and only if
$$
\cA_\norm(u,J) \le \cA_\norm(v,J) + \Lambda\int_J|u-v|ds
$$
whenever $J\in \cO_b(I)$ and $v\in BV_\loc(I)$ with $\supp(u-v)\strictlyincluded J.$ 

\begin{remark}\label{rem:finite_variat_local}
If $I$ is bounded and $u\in L^\infty(I)\cap BV_\loc(I)$ is a $\Lambda$-local minimizer of $\cA_\norm$ in $I,$ then $u\in BV(I).$ Indeed, for any open interval $J\strictlyincluded I$ consider the test function $v=u\chi_{I\setminus J}.$ Then 
$$
\cA_\norm(u,J) \le \norm(\be_2)|J| + 4\|u\|_\infty + 2\Lambda\|u\|_\infty |J|.
$$
Now letting $J\nearrow I$ we find $\cA_\norm(u,I)<+\infty$ and hence $u\in BV(I).$ In particular, the traces of $u$ on $\p I$ are well-defined. 
\end{remark}

These two notions are linked as follows.

\begin{proposition}\label{prop:locmin_func_set}
Let $u\in BV_\loc(I).$ 
\begin{itemize}
\item If the subgraph $\subgraph(u)$ is a $\Lambda$-local minimizer of $\cP_\norm$ in $I\times\R,$ then $u$ is a $\Lambda$-local minimizer of $\cA_\norm$ in $I.$

\item If $\norm$ is partially monotone and $u$ is a $\Lambda$-local minimizer of $\cA_\norm$ in $I,$  then  $\subgraph(u)$ is a $\Lambda$-local minimizer of $\cP_\norm$ in $I\times\R.$  
\end{itemize}
\end{proposition}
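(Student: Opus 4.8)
The plan is to prove the two implications of Proposition \ref{prop:locmin_func_set} separately, using Lemma \ref{lem:area_equal_perimeter} as the bridge between $\cA_\norm$ and $\cP_\norm$ in both directions.

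\textbf{First implication.} Suppose $\subgraph(u)$ is a $\Lambda$-local minimizer of $\cP_\norm$ in $I\times\R.$ Fix $J\in\cO_b(I)$ and a competitor $v\in BV_\loc(I)$ with $K:=\supp(u-v)\strictlyincluded J.$ First I would enlarge $J$ slightly (still compactly inside $I$) so that $K\strictlyincluded J,$ and observe $\subgraph(u)\Delta\subgraph(v)\subset K\times(\inf_K u\wedge\inf_K v,\sup_K u\vee\sup_K v)$; since $u,v\in BV_\loc$ are locally bounded near the compact set $K,$ this symmetric difference is compactly contained in an open set $\Omega'\strictlyincluded I\times\R$ of the form $J'\times(-M,M).$ Then the $\Lambda$-local minimality of $\subgraph(u)$ against the competitor $F=\subgraph(v)$ gives $\cP_\norm(\subgraph(u),\Omega')\le\cP_\norm(\subgraph(v),\Omega')+\Lambda|\subgraph(u)\Delta\subgraph(v)|.$ Finally, Lemma \ref{lem:area_equal_perimeter} applied on $J'$ converts both perimeter terms to $\cA_\norm(u,J')$ and $\cA_\norm(v,J')$, while by Fubini $|\subgraph(u)\Delta\subgraph(v)|=\int_{J'}|u-v|\,ds=\int_J|u-v|\,ds$; since $u=v$ outside $K$ the area terms also agree off $K$, so $\cA_\norm(u,J)\le\cA_\norm(v,J)+\Lambda\int_J|u-v|\,ds.$ A small technical point is that Lemma \ref{lem:area_equal_perimeter} requires the perimeter in the full cylinder $J'\times\R$; this is handled by noting that the reduced boundary of $\subgraph(u)$ meeting $J'\times\R$ lies in $J'\times(-M',M')$ once $|u|$ is bounded on $J'$, so perimeter in the bounded cylinder $\Omega'$ and in $J'\times\R$ coincide.

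\textbf{Second implication.} Now assume $\norm$ is partially monotone and $u$ is a $\Lambda$-local minimizer of $\cA_\norm$ in $I$. We must show $\subgraph(u)$ is a $\Lambda$-local minimizer of $\cP_\norm$ in $I\times\R$. Fix $\Omega'\strictlyincluded I\times\R$ and $F\in BV_\loc(I\times\R;\{0,1\})$ with $E\Delta F\strictlyincluded \Omega'$, where $E=\subgraph(u)$. The key step is to replace $F$ by a subgraph without increasing $\cP_\norm$: using partial monotonicity of $\norm$ (equivalently, symmetry of $\Wulff^\norm$ about the coordinate axes), a vertical rearrangement $F\mapsto\subgraph(v)$ with $v(s):=\sL^1(\{t:(s,t)\in F\})+c$ (suitably normalized so that $v=u$ outside the projection of $E\Delta F$) does not increase the $\norm$-perimeter inside any vertical cylinder. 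Here one uses that for a partially monotone $\norm$ the anisotropic perimeter is non-increasing under monotone (vertical) rearrangement; this is where I expect the main work, and I would invoke (or reprove in this 1D-graph setting) a polyhedral-approximation or slicing argument analogous to the classical isoperimetric-type rearrangement inequality, together with $\norm^o(\nu)=\norm^o(|\nu_1|,|\nu_2|)$. Since $E\Delta F\strictlyincluded\Omega'$, the rearranged set satisfies $v=u$ outside a set $\strictlyincluded J$ for some $J\in\cO_b(I)$, and $|E\Delta\subgraph(v)|\le|E\Delta F|$ (rearrangement also decreases the symmetric difference with the already-rearranged $E$). Applying the $\Lambda$-local minimality of $u$ in the form stated after the definition, $\cA_\norm(u,J)\le\cA_\norm(v,J)+\Lambda\int_J|u-v|\,ds$, and translating back via Lemma \ref{lem:area_equal_perimeter} and Fubini yields $\cP_\norm(E,\Omega')\le\cP_\norm(\subgraph(v),\Omega')+\Lambda|E\Delta\subgraph(v)|\le\cP_\norm(F,\Omega')+\Lambda|E\Delta F|$.

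\textbf{Main obstacle.} The delicate point is the rearrangement inequality in the second implication: one must show that replacing an arbitrary finite-perimeter competitor $F$ by the subgraph of its vertical layer-cake function does not raise the anisotropic perimeter in a cylinder, and simultaneously does not raise the symmetric difference with $\subgraph(u)$ (itself already a subgraph). The perimeter monotonicity is exactly where partial monotonicity of $\norm$ enters and cannot be dropped; I would prove it by first reducing to smooth or polyhedral $\partial F$, slicing $\cP_\norm$ by vertical lines to compare the measure of $\{v>t\}$-boundary against that of $\partial\{(s,t')\in F\}$ fibrewise, and using $\norm^o(\nu_1,\nu_2)=\norm^o(|\nu_1|,|\nu_2|)$ to discard cancellations that occur when $F$ is not a subgraph. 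The comparison of symmetric differences is elementary once one writes $|E\Delta F|=\int_I \sL^1(\{t: (s,t)\in E\Delta F\})\,ds$ and uses that for fixed $s$, among sets of prescribed measure the one minimizing $\sL^1$-symmetric-difference with the half-line $(-\infty,u(s))$ is itself a half-line.
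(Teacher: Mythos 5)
Your argument is essentially the paper's own proof: the first implication is obtained by testing the set-minimality of $\subgraph(u)$ against $\subgraph(u+\psi)$ in a bounded cylinder and converting via Lemma \ref{lem:area_equal_perimeter} and Fubini, and the second by vertical rearrangement of $F$, with partial monotonicity (i.e. $\norm^o(\nu)=\norm^o(|\nu_1|,|\nu_2|)$) yielding that the rearrangement does not increase $\cP_\norm$ — exactly the step the paper settles by invoking the estimates of \cite{BKhN:2017_cpaa}, namely $\sL^1(J')\le\int_{J'\times\R}|\scalarp{D\chi_F}{\be_2}|$ and $\int_{J'}|Dv|\le\int_{J'\times\R}|\scalarp{D\chi_F}{\be_1}|$. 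The only adjustment needed is that your normalization $v(s)=\sL^1(\{t:(s,t)\in F\})+c$ is infinite as written; one must measure the slice inside a bounded strip, as in the paper's choice $v(s)=a+\sH^1(\{t\in(a,b):(s,t)\in F\})$ with $\subgraph(u)\Delta F\strictlyincluded J\times(a,b)$, after which your slice-wise comparison of symmetric differences goes through.
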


At the moment  we do not have any explicit example showing the necessity of partial monotonicity in the second assertion of the  proposition.

\begin{proof}
Let  $\subgraph(u)$ be a $\Lambda$-local minimizer of $\cP_\norm$ in $I\times\R,$  and fix $J\in \cO_b(I)$ and $\psi\in C_c^1(J).$ Then 
$
\subgraph(u)\Delta \subgraph(u+\psi) \strictlyincluded J\times\R
$
and hence, for any bounded open set $\Omega'\strictlyincluded I\times\R$ compactly containing $\subgraph(u)\Delta \subgraph(u+\psi)$ we have 
$$
\cP_\norm(\subgraph(u),J\times\R) - \cP_\norm(\subgraph(u+\psi),J\times\R) = \cP_\norm(\subgraph(u),\Omega') - \cP_\norm(\subgraph(u+\psi),\Omega') \le \Lambda|\subgraph(u)\Delta \subgraph(u+\psi)|.
$$
By Lemma \ref{lem:area_equal_perimeter} and the equality 
$$
\int_J |u-v|ds = |[\subgraph(u)\Delta \subgraph(v)]\cap [J\times \R]|,
$$
the inequality above is equivalent to 
$$
\cA_\norm(u,J) -\cA_\norm(u+\psi,J) \le \Lambda \int_J|(u+\psi)-u|ds = \Lambda\int_J|\psi|ds,
$$
and hence  $u$ is a $\Lambda$-local minimizer of $\cA_\norm$ in $I.$

Conversely, assume that $\norm$ is partially monotone and $u$ is a $\Lambda$-local minimizer of $\cA_\norm$ in $ I.$
Let $F\in BV_\loc(I\times\R;\{0,1\})$ be such that $\subgraph(u)\Delta F\strictlyincluded J\times(a,b)\strictlyincluded I\times\R$ for some $J\in \cO_b(I)$ and $a,b\in\R.$ Let $v$ be the function, whose subgraph is the   vertical rearrangement of $F,$ i.e., 
$$
v(s) = a + \sH^1(\{x_2\in(a,b):\,\, (s,x_2)\in F\}),\quad s\in I.
$$
Note that by the definition of the rearrangement, for a.e. $s\in I,$ $v(s)$ satisfies 
\begin{equation*}
|u(s)-v(s)| = \sH^1((\subgraph(u)\Delta F)\cap \{x_1=s\})
\end{equation*}
so that by the Fubini-Tonelli theorem,
\begin{equation}\label{hstdzv}
\int_{J'}|u-v|ds = \int_{J'} \sH^1\big(
(\subgraph(u)\Delta F)\cap \{x_1=s\}\big)ds = |(\subgraph(u)\Delta F) \cap (J'\times\R)|\quad\text{for any $J'\in\cO_b(I)$}.
\end{equation}
Repeating the same arguments of \cite[Section 4]{BKhN:2017_cpaa}  (see also \cite{Miranda:1964}) we can show that $v\in BV_\loc^1(I),$ $\supp(u-v)\strictlyincluded J,$ 
\begin{equation}\label{ahstdv}
\sL^1 (J') \le \int_{J'\times\R}|\scalarp{D\chi_F}{\be_2}|\quad\text{and}\quad 
\int_{J'}|Dv| \le \int_{J'\times\R}|\scalarp{D\chi_F}{\be_1}|\quad\text{for any $J'\in \cO_b(I),$}
\end{equation}
where the Radon measures $-\scalarp{D\chi_F}{\be_1}$ and $-\scalarp{D\chi_F}{\be_2}$ are the horizontal and vertical 
components of $D\chi_F,$ which coincide with $\scalarp{\nu_F}{\be_1}\,\sH^1\res\p^*F$ and $\scalarp{\nu_F}{\be_2}\,\sH^1\res\p^*F,$ respectively. 
Since $\norm^o$ is partially monotone, by \eqref{ahstdv} 
\begin{align}\label{asfhfbb}
\cA_\norm(v,J) 
= & \int_{J'} \norm^o(-Dv,1) 
=
\int_{J'} \norm^o(|Dv|,\sL^1) 
\le   
\int_{J'\times\R} \norm^o\big(|\scalarp{D\chi_F}{\be_1}|,|\scalarp{D\chi_F}{\be_2}|\big) 
\nonumber
\\
= & 
\int_{J'\times\R} \norm^o\big(\scalarp{D\chi_F}{\be_1},
\scalarp{D\chi_F}{\be_2}\big) 
= 
\int_{J'\times\R} \norm^o(D\chi_F) = \cP_\norm(F,J'\times\R)
\end{align}
for all $J'\in \cO_b(I).$

Now, by Lemma \ref{lem:area_equal_perimeter}  and the $\Lambda$-local minimality of $u$,
\begin{align}\label{iwontbelieve1}
\cP_\norm(\subgraph(u),J\times \R) - \cP_\norm(\subgraph(v),J\times \R) 
= \cA_\norm(u,J) - \cA_\norm(v,J) 
\le \Lambda \int_J |u-v|ds.
\end{align}
Applying \eqref{hstdzv} and \eqref{asfhfbb} with $J'=J$ and recalling that $\subgraph(u)\Delta F \strictlyincluded J\times (a,b),$ from \eqref{iwontbelieve1} we conclude 
\begin{align*}
\cP_\norm(\subgraph(u),J\times (a,b)) - \cP_\norm(F,J\times (a,b)) = &\cP_\norm(\subgraph(u),J\times \R) - \cP_\norm(F,J\times \R) \\
\le  & \cP_\norm(\subgraph(u),J\times \R) - \cP_\norm(\subgraph(v),J\times \R) \le \Lambda|\subgraph(u)\Delta F|.
\end{align*}
Thus, by definition, $\subgraph(u)$ is a $\Lambda$-local minimizer of $\cP_\norm$ in $I\times\R.$
\end{proof}

Note that if $\norm$ is partially monotone, then $\cA_\norm(u,\cdot)=\cA_\norm(-u,\cdot),$ and hence  $u$ is $\Lambda$-local minimizer if and only if so is $-u.$ Thus, from Proposition \ref{prop:locmin_func_set} we get the following corollary.

\begin{corollary}\label{cor:inter_corol}
Let $u\in BV_\loc(I).$ For any partially monotone anisotropy $\norm,$  the following assertions are equivalent:
\begin{itemize}
\item $u$ is a $\Lambda$-local minimizer of $\cA_\norm$ in $I;$

\item  $-u$ is a $\Lambda$-local minimizer of $\cA_\norm$ in $I;$ 

\item the subgraph $\subgraph(u)$ of $u$ is a $\Lambda$-local minimizer of $\cP_\norm$ in $I\times\R;$

\item  the (strict) epigraph $\epigraph(u):=\{(s,t)\in I\times\R:\,\, u(s)<t\}$ is a $\Lambda$-local minimizer of $\cP_\norm$ in $I\times\R.$
\end{itemize}
\end{corollary}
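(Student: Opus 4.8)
The plan is to derive Corollary \ref{cor:inter_corol} purely from Proposition \ref{prop:locmin_func_set} together with the symmetry of the anisotropic area functional under the reflection $u\mapsto-u$ when $\norm$ is partially monotone. The first step is to record this symmetry: since $\norm$ is partially monotone, $\norm$ is even in each coordinate separately, so in particular $\norm(h_1,-h_2)=\norm(h_1,h_2)$ and $\norm(-h_1,h_2)=\norm(h_1,h_2)$; consequently $\norm^o$ is partially monotone as well, and from the very definition of $\cA_\norm(u,J)$ as a supremum over test pairs $(h_1,h_2)$, replacing $u$ by $-u$ and $(h_1,h_2)$ by $(-h_1,-h_2)$ (or using \eqref{relaxing}, noting $\norm^o(\be_1)$ and the partial variations are unaffected by sign changes in $Du$) gives $\cA_\norm(-u,J)=\cA_\norm(u,J)$ for every $J\strictlyincluded I$. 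From this identity it is immediate that $u$ is a $\Lambda$-local minimizer of $\cA_\norm$ in $I$ if and only if $-u$ is: indeed, testing the local minimality of $-u$ against a competitor $-u+\psi$ is, after multiplying by $-1$ inside $\cA_\norm$, exactly testing the local minimality of $u$ against $u-\psi$, and the penalization term $\Lambda\int_J|\psi|\,ds$ is unchanged. This establishes the equivalence of the first two bullet points.

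Next I would connect these to the subgraph statements. The equivalence ``$u$ is a $\Lambda$-local minimizer of $\cA_\norm$ in $I$'' $\iff$ ``$\subgraph(u)$ is a $\Lambda$-local minimizer of $\cP_\norm$ in $I\times\R$'' is exactly the content of Proposition \ref{prop:locmin_func_set}: the first assertion of that proposition gives the implication from the set to the function without any hypothesis, and the second gives the converse implication precisely under partial monotonicity, which we are assuming. Thus the first and third bullet points are equivalent. For the fourth bullet point, apply the already-proved equivalence (first $\iff$ third) with $u$ replaced by $-u$: $-u$ is a $\Lambda$-local minimizer of $\cA_\norm$ in $I$ if and only if $\subgraph(-u)$ is a $\Lambda$-local minimizer of $\cP_\norm$ in $I\times\R$. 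It then remains to observe that $\subgraph(-u)$ is the image of $\epigraph(u)$ under the reflection $(x_1,x_2)\mapsto(x_1,-x_2)$, and that this reflection maps $I\times\R$ onto itself and preserves $\cP_\norm$ because $\norm$ (hence $\norm^o$) is even in the second coordinate — so $\cP_\norm(\subgraph(-u),\Omega')=\cP_\norm(\epigraph(u),\tilde\Omega')$ for the reflected open set, and $\Lambda$-local minimality is transported along this isometry. Chaining the equivalences: (fourth) $\iff$ ($\epigraph(u)$ is a $\Lambda$-local minimizer) $\iff$ ($\subgraph(-u)$ is) $\iff$ ($-u$ is a $\Lambda$-local minimizer of $\cA_\norm$) $\iff$ ($u$ is), closing the loop.

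The main (and really only) obstacle is a bookkeeping one rather than a conceptual one: verifying carefully that the reflection in the second coordinate genuinely preserves $\cP_\norm$ and correctly swaps subgraph and epigraph up to a vertical translation/reflection, and that the $\Lambda$-penalization — which involves the Lebesgue measure of the symmetric difference — is invariant under this reflection (it is, since $\sL^2$ is reflection-invariant). One should also be slightly careful that $\epigraph(u)$ and $\subgraph(-u)$ differ only on an $\sL^2$-null set (the graph of $u$), so they have the same $\cP_\norm$ and the same local-minimality properties; this is standard since sets of finite perimeter are identified up to null sets. Apart from these routine checks the corollary follows formally from Proposition \ref{prop:locmin_func_set} and the reflection symmetry of $\cA_\norm$ and $\cP_\norm$, so the proof will be short.
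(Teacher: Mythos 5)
Your proposal is correct and follows essentially the same route as the paper: the authors also note that partial monotonicity gives $\cA_\norm(u,\cdot)=\cA_\norm(-u,\cdot)$, hence $u$ is a $\Lambda$-local minimizer if and only if $-u$ is, and then invoke Proposition \ref{prop:locmin_func_set} (with the reflection $(s,t)\mapsto(s,-t)$ identifying $\epigraph(u)$ with $\subgraph(-u)$) to close the chain of equivalences. Your additional bookkeeping about the reflection preserving $\cP_\norm$ and the symmetric-difference penalty is exactly the routine verification the paper leaves implicit.
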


\subsection{Density estimates}

The proof of the next lemma is well-known in the literature (see e.g. \cite{DeM:2015,Kholmatov:2024_ifb}) and can be proven, for instance, using the filling-in or cutting-out with balls.

\begin{lemma}\label{lem:density_estimates}
Given an anisotropy $\norm,$ $\Lambda\ge0$ and an open set $\Omega\subseteq\R^2,$ let $E\in BV_\loc(\Omega;\{0,1\})$ be a $\Lambda$-local minimizer of $\cP_\norm$ in $\Omega.$ Assume that $E=E^{(1)},$ i.e., $E$ coincides with its Lebesgue points. Then for any $\Omega'\strictlyincluded\Omega$ there exist constants $r_0:=r_0(\norm,\Lambda,\dist(\p \Omega',\p\Omega))>0$ and $q_0:=q_0(\norm,\Lambda)\in(0,1/2)$ such that 
\begin{equation*}
P(E,B_r(x)) \le \frac{r}{q_0},\quad x\in\Omega',\,\,\,r\in(0,r_0), 
\end{equation*}
\begin{equation*}
q_0 \le   \frac{|E\cap B_r(x)|}{|B_r(x)|} \le 1-q_0,\quad x\in \p E,\,\,\, r\in(0,r_0),
\end{equation*}
and 
\begin{equation*}
P(E,B_r(x)) \ge q_0 r,\quad x\in \p E,\,\,\, r\in(0,r_0).
\end{equation*}
\end{lemma}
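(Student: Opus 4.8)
The plan is to establish the three estimates by the standard comparison technique, exploiting $\Lambda$-local minimality against competitors obtained by filling in or cutting out Euclidean balls. Throughout I fix $\Omega'\Subset\Omega$ and set $d:=\dist(\p\Omega',\p\Omega)>0$, working only with balls $B_r(x)$ of radius $r<r_0\le d/2$ so that $B_r(x)\Subset\Omega$ whenever $x\in\Omega'$; the precise threshold $r_0$ will be chosen at the end. Since all quantities on the right are Euclidean perimeters while $\cP_\norm$ carries the anisotropy, I will repeatedly use the two-sided bound $c^{-1}\le\norm^o\le c^{-1}$ on $\S^1$ coming from \eqref{norm_bounds} (equivalently, $c\,\norm^o\le|\cdot|\le c^{-1}\norm^o$ after adjusting constants), so that $\cP_\norm$ and $P$ are comparable up to a factor depending only on $\norm$; the final $q_0$ absorbs this factor.

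For the \emph{perimeter upper bound}, I compare $E$ with $F:=E\setminus B_r(x)$ (or $E\cup B_r(x)$, whichever is the relevant half-space-type correction — here $F=E\setminus B_r(x)$). Then $E\Delta F\subseteq B_r(x)\Subset\Omega$, so the $\Lambda$-local minimality gives $\cP_\norm(E,B_{r}(x))\le\cP_\norm(F,B_r(x))+\Lambda|E\cap B_r(x)|$. On the competitor side $\p^*F$ inside $B_r(x)$ is contained in $\p^*E$ outside plus a portion of $\p B_r(x)$; taking $\Omega'$-localized balls and letting the radius of the comparison domain shrink to $r$, one obtains $\cP_\norm(F,B_r(x))\le \sH^1(\p B_r(x))\cdot\max_{\S^1}\norm^o \le C(\norm)\,r$, while $\Lambda|E\cap B_r(x)|\le\Lambda\pi r^2$. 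Combining and converting $\cP_\norm$ back to $P$ yields $P(E,B_r(x))\le C'(\norm)(r+\Lambda r^2)\le \tfrac{r}{q_0}$ once $r_0$ is small enough that $\Lambda r_0\le 1$, with $q_0=q_0(\norm,\Lambda)$. For the \emph{density estimates} at $x\in\p E$: pick $x\in\p E=\p\{E^{(1)}\}$, so by the characterization of Lebesgue points neither $|E\cap B_r(x)|$ nor $|B_r(x)\setminus E|$ vanishes for $r$ small. Set $m(r):=|E\cap B_r(x)|$; comparing $E$ with $E\setminus B_r(x)$ and using the relative isoperimetric inequality in the ball (the measure-theoretic boundary of $E\cap B_r(x)$ splits into $\p^*E\cap B_r(x)$ and part of $\p B_r(x)$) gives, after converting anisotropic to Euclidean perimeter,
\begin{equation*}
c_n\,m(r)^{1/2}\le P(E\cap B_r(x),\R^2)\le C(\norm)\big(\cP_\norm(E,B_r(x))+\sH^1(\p B_r(x)\cap E^{(1)})\big)\le C'(\norm)\big(m'(r)+\Lambda m(r)\big)
\end{equation*}
for a.e. $r$, where $m'(r)=\sH^1(\p B_r(x)\cap E)$ by the coarea formula and $\cP_\norm(E,B_r(x))\le C(\norm)\,m'(r)+\Lambda m(r)$ comes from the same $E\setminus B_r(x)$ comparison. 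Absorbing $\Lambda m(r)\le \Lambda\pi r^2\le\tfrac12 c_n m(r)^{1/2}$ for $r\le r_0(\norm,\Lambda)$ small, this is a differential inequality $m(r)^{1/2}\le C'' m'(r)$, i.e. $\big(m(r)^{1/2}\big)'\ge \tfrac{1}{2C''}$, which upon integrating from $0$ (using $m(0)=0$ and $m(r)>0$ for $r>0$) yields $m(r)\ge q_0\,\pi r^2$; applying the identical argument to $\R^2\setminus E$ (also a $\Lambda$-local minimizer, by symmetry of the defining inequality) gives $|B_r(x)\setminus E|\ge q_0\pi r^2$, hence the two-sided density bound. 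The \emph{lower perimeter bound} then follows: plugging $m(r)\ge q_0\pi r^2$ into the relative isoperimetric inequality above gives $c_n(q_0\pi)^{1/2}r\le C(\norm)(P(E,B_r(x))+2\pi r)$ — this is not yet what we want — so instead I argue directly: the relative isoperimetric inequality gives $P(E,B_r(x))\ge c_n\min\{m(r),|B_r(x)\setminus E|\}^{1/2}-\Lambda\pi r^2\cdot(\dots)$; more cleanly, combine the already-proved density lower bounds for both $E$ and its complement with the relative isoperimetric inequality $P(E,B_r(x))\ge c_n\,\min\{|E\cap B_r(x)|,|B_r(x)\setminus E|\}^{(n-1)/n}$ to get $P(E,B_r(x))\ge c_n(q_0\pi)^{1/2}r =: \tilde q_0 r$, and relabel $q_0:=\min\{q_0,\tilde q_0\}$.

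The main obstacle is bookkeeping the conversion between the anisotropic perimeter $\cP_\norm$ (which is what $\Lambda$-local minimality controls) and the Euclidean perimeter $P$ (which is what appears in the statement and in the relative isoperimetric inequality), while keeping track of which constants depend only on $\norm$ versus on $\Lambda$ and on $d=\dist(\p\Omega',\p\Omega)$. The $\Lambda$-dependence enters only through the smallness threshold needed to absorb the $\Lambda|E\cap B_r|\le\Lambda\pi r^2$ term into the leading-order term, which forces $r_0\lesssim \Lambda^{-1}$ (hence $r_0=r_0(\norm,\Lambda,d)$ with the $d$-dependence merely ensuring $B_{r_0}(x)\Subset\Omega$), whereas the final ratio constant $q_0$ depends only on $\norm$ and $\Lambda$ as claimed. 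A secondary technical point is justifying the differential inequality rigorously: $r\mapsto m(r)$ is absolutely continuous with $m'(r)=\sH^1(\partial B_r(x)\cap E)$ for a.e. $r$, and the anisotropic-perimeter comparison must be applied on the localized domains $B_{r'}(x)$ with $r'\downarrow r$ (or with $\Omega'$ replaced by a slightly larger set) to legitimately drop the $\Omega'\Subset\Omega$ restriction in the definition of local minimality — a routine but necessary approximation that I would dispatch with one or two sentences citing the standard references \cite{Maggi:2012, Giusti:1980}.
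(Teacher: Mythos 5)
The paper itself gives no proof of this lemma -- it cites \cite{DeM:2015,Kholmatov:2024_ifb} and remarks that it follows ``using the filling-in or cutting-out with balls'' -- and your proposal is exactly that standard comparison argument (competitors $E\setminus B_r(x)$, $E\cup B_r(x)$, coarea formula, isoperimetric inequality, differential inequality for $m(r)=|E\cap B_r(x)|$, and passing to the complement, which is again a $\Lambda$-local minimizer because $\norm^o$ is even), so in approach you coincide with the intended proof, and the constants come out with the stated dependencies ($r_0$ on $\norm,\Lambda,\dist(\p\Omega',\p\Omega)$ through $r_0\le\min\{d/2,c/\Lambda\}$, and $q_0$ on $\norm,\Lambda$). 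One step, however, is circular as written: you absorb the error term via ``$\Lambda m(r)\le\Lambda\pi r^2\le\tfrac12 c_2\,m(r)^{1/2}$'', and the second inequality presupposes precisely the lower density bound you are in the middle of proving -- at that stage nothing prevents $m(r)$ from being far smaller than $r^2$. The correct (and standard) absorption uses only the trivial bound $m(r)\le \pi r^2$ in the form $\Lambda m(r)=\Lambda\, m(r)^{1/2}m(r)^{1/2}\le \Lambda\sqrt{\pi}\,r\,m(r)^{1/2}$, which is $\le\tfrac12\,(c_2/C(\norm))\,m(r)^{1/2}$ once $r\le r_0(\norm,\Lambda)$; with this one-line repair the differential inequality $m(r)^{1/2}\le C''m'(r)$, its integration from $0$ (legitimate since $m$ is absolutely continuous by coarea and $m>0$ for $r>0$ because $E=E^{(1)}$ and $x\in\p E$), and the rest of your argument -- including the final lower perimeter bound from the relative isoperimetric inequality combined with the two-sided density estimate -- go through as you describe.
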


From Lemma \ref{lem:density_estimates} and a
covering argument we immediate deduce that every $\Lambda$-local minimizer $E=E^{(1)}$ in $\Omega$ satisfies 
\begin{equation}\label{topo_boundo}
\p E=\cl{\p^*E},\quad \sH^1\big(\Omega'\cap (\p E\setminus \p^*E)
\big) = 0 \quad\text{and}\quad \sH^1\big(\Omega'\cap 
(\cl{E}\setminus \mathring{E})\big) <+\infty\quad\text{for any open $\Omega'\strictlyincluded \Omega.$}
\end{equation}
In particular, possibly changing a negligible set, $E$ can be assumed open or closed. 

\begin{remark}\label{rem:lip_regular_sets}
Any $\Lambda$-local minimizer $E$ of $\cP_\norm$ in $\Omega$ satisfies
$$
\cP_\norm(E,B_\rho(x)) \le \cP_\norm(F,B_\rho(x)) + \Lambda \sqrt{\pi} \rho\sqrt{|E\Delta F|}
$$
whenever $x\in \p E,$ $B_\rho(x)\strictlyincluded \Omega$ and $E\Delta F\strictlyincluded B_\rho(x).$ Thus, $E$ is $\omega$-minimal in the sense of  \cite{NP:2005_ampa} with $\omega(\rho)=\Lambda\sqrt{\pi}\rho.$ In particular, by  \cite[Theorem 3.4]{NP:2005_ampa}, the set $\Sigma$ of all points $x\in \Omega\cap \p E$ around which $\Omega\cap \p E$ is not a Lipschitz graph is discrete and is empty if $\Wulff^\norm$ is not a quadrilateral\footnote{In fact, 
$\omega$-minimal sets are defined for any anisotropy, not necessarily even and \cite[Theorem 3.4]{NP:2005_ampa} shows that in general $\Sigma$ is discrete. Moreover, if $W^\norm$ is neither a triangle nor a quadrilateral, then $\Sigma$ is empty.}. 
\end{remark}

\section{Classification of 
local minimizers}\label{sec:class_local_mini}

In this section we classify the minimizers of $\cA_\norm,$ i.e., study functions $u\in BV_\loc(I)$ satisfying 
$
\cA_\norm(u,J)\le \cA_\norm(v,J)
$
for any open set $J\Subset I$ and $v\in BV_\loc(I)$ with $\supp(u-v)\Subset J.$ 

\begin{theorem}[Characterization of local minimizers]\label{teo:0_loc_minos}
Let $\norm$ be a partially monotone anisotropy, $I\subseteq\R$ be an interval and $u\in BV_\loc(I).$ Let $\Gamma_u:=(I\times\R)\cap\cl{\p^*\subgraph(u)}$ be the generalized graph of $u$ and $\nu_{\subgraph(u)}:\Gamma_u\to\S^1$ be the unit normal field, outer to $(I\times\R)\cap \subgraph(u),$ defined $\sH^1$-a.e. on $\Gamma_u.$ 
Then $u$ is a local minimizer 
of $\cA_\norm$ in $I$ if and only if there exists a vector $N\in\R^2$  such that 
\begin{equation}\label{vectorN}
\norm(N)=1\qquad\text{and}\qquad 
\scalarp{N}{\nu_{\subgraph(u)}} = \norm^o(\nu_{\subgraph(u)})\quad\text{$\sH^1$-a.e. on $\Gamma_u.$}
\end{equation}
Moreover, $u$ is monotone in $I.$
\end{theorem}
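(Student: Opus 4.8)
The plan is to exploit the duality between local minimality of $\cA_\norm$ and local minimality of the $\norm$-perimeter of the subgraph (Corollary \ref{cor:inter_corol}), and then to characterize when a Cartesian set $\subgraph(u)$ is a perimeter-local minimizer via the existence of a \emph{calibration} that is a constant vector field. Concretely, the classical fact I would invoke is: a set $E$ of locally finite perimeter in an open set $\Omega$ is a local minimizer of $\cP_\norm$ iff there is a vector field $N\colon\Omega\to\R^2$ with $\norm(N)\le1$, $\div N=0$ in the distributional sense, and $\scalarp{N}{\nu_E}=\norm^o(\nu_E)$ $\sH^1$-a.e.\ on $\p^*E$. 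For the ``if'' direction this is the standard calibration computation: for any competitor $F$ agreeing with $E$ outside a compact set, the Gauss--Green/divergence theorem applied to $N$ on $E\Delta F$ gives $\cP_\norm(E,\cdot)=\int_{\p^*E}\scalarp{N}{\nu_E}\le\int_{\p^*F}\scalarp{N}{\nu_F}\le\cP_\norm(F,\cdot)$, where the first equality uses the calibration condition on $\p^*E$, the middle equality uses $\div N=0$ together with $\p(E\Delta F)$ being contained in $\p^*E\cup\p^*F$ up to $\sH^1$-null sets, and the last inequality uses $\scalarp{N}{\nu}\le\norm^o(\nu)$ since $\norm(N)\le1$. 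When $N$ is moreover \emph{constant}, $\div N=0$ is automatic, so \eqref{vectorN} (with $\norm(N)=1$ forced by the calibration equality wherever $\Gamma_u\ne\emptyset$) is exactly sufficient for local minimality of $\subgraph(u)$, hence of $u$ by Corollary \ref{cor:inter_corol}.

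For the ``only if'' direction I would argue that a local minimizer $u$ has a graph $\Gamma_u$ whose generalized normal $\nu_{\subgraph(u)}$ never points straight up or straight down — because the subgraph, being a subgraph, cannot have $\nu=\pm\be_2$ on a set of positive $\sH^1$ measure without violating the structure (a vertical portion of normal on a subgraph would force $u$ to jump, and one can undercut/overcut such a jump with a competitor to strictly decrease $\cA_\norm$ when $\Wulff^\norm$ has no vertical facet — but here we do not even need that hypothesis; monotonicity will come out). The key structural step is to show $u$ is monotone: if $u$ were not monotone, it would have a strict local max above a strict local min on some subinterval, and one can construct an explicit competitor (a horizontal ``slab'' truncation, using that $\norm$ is partially monotone so that truncations do not increase $\cA_\norm$, in fact strictly decrease it here) contradicting local minimality; this is essentially the argument of \cite[Section 4]{BKhN:2017_cpaa} adapted to the local setting. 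Once $u$ is monotone, $\Gamma_u$ is a monotone curve, $\nu_{\subgraph(u)}$ takes values in a fixed half-circle (say with nonnegative first component if $u$ is nondecreasing), and one then shows that the direction in $\R^2$ dual to $\norm^o$ along $\Gamma_u$ must be $\sH^1$-a.e.\ the same vector $N$: indeed, localizing the minimality inequality on small arcs and comparing with the affine competitor joining the endpoints forces $\norm^o(\nu_{\subgraph(u)})$ to be linear in the displacement, i.e.\ equal to $\scalarp{N}{\nu_{\subgraph(u)}}$ for a single $N$ with $\norm(N)=1$; equivalently, $\int_{\Gamma_u}\nu_{\subgraph(u)}\,d\sH^1$-type quantities over sub-arcs must be colinear. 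I would phrase this last step by noting that local minimality of a monotone $u$ on every subinterval $(s_1,s_2)$ with fixed boundary values is equivalent to $\cA_\norm(u,(s_1,s_2))=\norm^o(u(s_2)-u(s_1),\,s_2-s_1)$, the value of the affine interpolant, and that the family of these equalities over all nested subintervals pins down a common supporting vector $N\in\p\norm^o$ at the direction $(u(s_2^-)-u(s_1^+),\,s_2-s_1)$, independent of $s_1,s_2$.

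The main obstacle I expect is the ``only if'' direction, specifically extracting the \emph{single} constant vector $N$ valid $\sH^1$-a.e.\ on all of $\Gamma_u$, including at jump points and on Cantor parts of $Du$, from the a priori only ``local'' minimality. The monotonicity reduction handles the gross behavior, but one still has to rule out, e.g., the normal wandering within a corner of $\Wulff^\norm$ in a way that is consistent with affine-interpolation equality on every subinterval yet not given by one $N$ — this requires a careful convexity argument showing that the subdifferentials $\p\norm^o\big(\nu_{\subgraph(u)}(x)\big)$ for $x$ ranging over $\Gamma_u$ have nonempty common intersection, which follows from a Helly-type / nested-intervals argument once the affine-interpolation equalities are in hand. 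I would also need the density estimates and \eqref{topo_boundo} to make sense of $\Gamma_u=(I\times\R)\cap\cl{\p^*\subgraph(u)}$ and of $\nu_{\subgraph(u)}$ being well-defined $\sH^1$-a.e., and Lemma \ref{lem:area_equal_perimeter} throughout to pass between $\cA_\norm$ and $\cP_\norm$.
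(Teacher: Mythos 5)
Your plan follows essentially the same route as the paper: a constant-field calibration (divergence theorem plus $\scalarp{N}{\nu}\le\norm^o(\nu)$) for sufficiency, transferred to $\cA_\norm$ via Corollary \ref{cor:inter_corol}; comparison with the affine interpolant on subintervals (anisotropic minimality of segments) together with a nested-interval compactness/convexity argument to extract a single constant $N$ for necessity; and a horizontal-truncation competitor exploiting partial monotonicity for the monotonicity claim --- these are exactly the paper's ingredients, with only the order of the monotonicity step differing. One caveat: your parenthetical claim that $\nu_{\subgraph(u)}$ can never be vertical is false (it equals $+\be_2$ on flat portions of the graph, e.g.\ for constant $u$; only $-\be_2$ is excluded), but since you explicitly do not rely on it, this does not affect the argument.
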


In the literature the vector $N$ satisfying \eqref{vectorN} is sometimes called a Cahn-Hoffman vector field associated to the rectifiable  curve  $\Gamma_u.$

\begin{proof}
We expect this result to be well-known in the literature; for  completeness we provide the proof.
\smallskip

$\Rightarrow.$ We apply a calibration argument as in \cite[Example 2.4]{BKhN:2017_cpaa}. Assume that there exists a vector  $N$   satisfying \eqref{vectorN} and let $F\in BV_\loc(I\times\R;\{0,1\})$ be such that $F\Delta \subgraph(u)\strictlyincluded J\times\R$ for some $J\strictlyincluded I.$ Then 
\begin{align*}
\cP_\norm(F,J\times\R) &= \int_{(J\times\R)\cap \p^*F} \norm^o(\nu_F)d\sH^1 \ge \int_{(J\times\R)\cap \p^*F} \scalarp{\nu_F}{N}\,d\sH^1. 
\end{align*}
On the other hand, by the divergence theorem
$$
0= \int_{F\setminus \subgraph(u)} \div N\,dx- \int_{\subgraph(u)\setminus F} \div N\,dx= \int_{(J\times\R)\cap \p^*F} \scalarp{\nu_F}{N}\,d\sH^1 -
\int_{(J\times\R)\cap \p^*\subgraph(u)} \scalarp{\nu_{\subgraph(u)}}{N}\,d\sH^1,
$$
and thus 
\begin{align*}
\int_{(J\times\R)\cap \p^*F} \scalarp{\nu_F}{N}\,d\sH^1 =\int_{(J\times\R)\cap \p^*\subgraph(u)} \scalarp{\nu_{\subgraph(u)}}{N}\,d\sH^1 = 
\int_{(J\times\R)\cap \p^*\subgraph(u)} \norm^o(\nu_{\subgraph(u)})\,d\sH^1 =\cP_\norm(\subgraph(u),J\times\R).
\end{align*}
Hence $\subgraph(u)$ is a local minimizer of $\cP_\norm$ in $I\times\R.$ Then Corollary \ref{cor:inter_corol} implies that $u$ is a local minimizer of $\cA_\norm$ in $I.$
\smallskip

$\Leftarrow.$ Assume that $u$ is a local minimizer of $\cA_\norm$ in $I.$ Since $|Du|(J)<+\infty$ for any open interval $J\strictlyincluded I,$ we have $u\in L^\infty(J).$ In particular, $u\in BV_\loc(I).$ 

Let $J\strictlyincluded I$ be any interval, whose boundary points are not on the jump set of $u;$ such an interval exists because $u$ has at most countably many jumps. Let $v$ be the function such that $v=u$ in $I\setminus J$ and linear in $J$ such that the traces of $u$ and $v$ on $\p J$ coincide. By the local boundedness of $u,$ $\subgraph(u)\Delta\subgraph(v)\strictlyincluded I\times\R.$ Then by the local minimality of $u$ and the anisotropic minimality of segments \cite{FFLM:2011_jmpa}, 
$$
\cP_\norm(\subgraph(u),J\times\R) \ge \cP_\norm(\subgraph(v),J\times\R) \ge \cP_\norm(\subgraph(u),J\times\R),
$$
and hence
$\cP_\norm(\subgraph(u),J\times\R) =  \cP_\norm(\subgraph(v),J\times\R).$ 
Choose a $N\in\R^2$ satisfying $\norm(N)=1$ and $\scalarp{\nu_{[p,q]}}{N}=\norm^o(\nu_{[p,q]})$ on $[p,q].$ As above, by the divergence formula 
$$
0= \int_{\subgraph(v)\setminus \subgraph(u)} \div N\,dx- \int_{\subgraph(u)\setminus \subgraph(v)} \div N\,dx= \int_{(J\times\R)\cap \p^*\subgraph(v)} \scalarp{\nu_F}{N}\,d\sH^1 -
\int_{(J\times\R)\cap \p^*\subgraph(u)} \scalarp{\nu_{\subgraph(u)}}{N}\,d\sH^1.
$$
Thus, 
\begin{multline}\label{vtbn13d}
\cP_\norm(\subgraph(v),J\times\R) =
\cP_\norm(\subgraph(u),J\times\R) = \int_{(J\times\R)\cap \p^*\subgraph(u)} \norm^o(\nu_{\subgraph(u)})d\sH^1 \ge \int_{(J\times\R)\cap \p^*\subgraph(u)} \scalarp{\nu_F}{N}\,d\sH^1 \\
=\int_{(J\times\R)\cap \p^*\subgraph(v)} \scalarp{\nu_{\subgraph(v)}}{N}\,d\sH^1 = 
\int_{(J\times\R)\cap \p^*\subgraph(v)} \norm^o(\nu_{\subgraph(v)})\,d\sH^1 =\cP_\norm(\subgraph(v),J\times\R),
\end{multline}
where in the fourth equality we used that $v$ is linear in $J.$ 
Thus, all inequalities in \eqref{vtbn13d} are in fact equalities. Since $\norm^o(\nu_{\subgraph(u)}) \ge \scalarp{\nu_{\subgraph(u)}}{N}$ $\sH^1$-a.e. on $(J\times\R)\cap \p^*\subgraph(u)$ and 
$$
\int_{(J\times\R)\cap \p^*\subgraph(u)} \norm^o(\nu_{\subgraph(u)})d\sH^1 = \int_{(J\times\R)\cap \p^*\subgraph(u)} \scalarp{\nu_F}{N}\,d\sH^1,
$$
from the Chebyshev inequality it follows that $\norm^o(\nu_{\subgraph(u)}) = \scalarp{\nu_F}{N}$ $\sH^1$-a.e. on $(J\times\R)\cap \p^*\subgraph(u).$  Now,
 consider a sequence  $J_k\nearrow I$ of open relatively
compact intervals and the  associated constant vectors $N_k\in \p \Wulff^\norm.$ Notice that each $N_k$ satisfies 
\begin{equation}\label{ahsgstdv}
\norm(N_k)=1\qquad\text{and}\qquad 
\norm^o(\nu_{\subgraph(u)}) = \scalarp{\nu_F}{N_k}\quad\text{$\sH^1$-a.e. on $(J_k\times\R)\cap \p^*\subgraph(u).$}
\end{equation}
Since $\p\Wulff^\norm$ is compact, there is no loss of generality in assuming $N_k\to N$ for some $N\in\p\Wulff^\norm.$ Note that, 
given $\bar k \in \mathbb N$, all $N_k$ with $k\ge\bar k$ satisfy  \eqref{ahsgstdv} in $J_{\bar k}.$ Since $N_k$ appear linearly in the second relation of \eqref{ahsgstdv}, it follows that any vector in the  closed convex hull $K_{\bar k}$ of $\cup_{k\ge\bar k}N_k$ also satisfies \eqref{ahsgstdv}. Clearly, $N$ belongs to $K_{\bar k}$ for all $\bar k.$ As $J_k\nearrow I,$ it follows that $N$ satisfies \eqref{vectorN}.

Finally, let us show that $u$ is monotone, i.e., it admits a monotone representative. Indeed, suppose that there exist
 $(a,b)\strictlyincluded I$ and $t\in\R$ such that $(a,t),(b,t)\in \Gamma_u.$ Let us define the competitor $v = u\chi_{I\setminus (a,b)} + t\chi_{(a,b)}.$ By the local minimality of $u,$ for any open interval $J$ with $(a,b)\strictlyincluded J\strictlyincluded I$ we have 
\begin{multline}\label{ctbunm}
0\le \cA_\norm(v,J) - \cA_\norm(u,J) = \int_{(a,b)} \Big(\norm^o(0,1)d\sL^1 - \norm^o(-Du,1)\Big) \\
+ \norm^o(\be_1)\Big( |v^+(a)-v^-(a)| - |u^+(a)-u^-(a)| + |v^+(b)-v^-(b)| - |u^+(b)-u^-(b)|\Big),
\end{multline}
where in the equality we used \eqref{relaxing}.
By the definition of $v$ and the choice of $t,$ $u^-(a)=v^-(a),$ $u^+(b)=v^+(b),$ $v^+(a)=v^-(b)=t$ and 
$$
|v^+(a)-v^-(a)| \le  |u^+(a)-u^-(a)|,\qquad|v^+(b)-v^-(b)| \le |u^+(b)-u^-(b)|.
$$
Moreover,  by the partial monotonicity of $\norm^o$ we have 
$$
\int_a^b \norm^o(-u',1)ds \ge \int_a^b \norm^o(0,1)ds,
$$
and hence, by \eqref{ctbunm} and \eqref{relaxing} we have 
\begin{multline*}
0\le \norm^o(\be_1)\Big(|u^+(a)-u^-(a)| - |v^+(a)-v^-(a)| + |u^+(b)-u^-(b)| -|v^+(b)-v^-(b)|\Big) \\
\le  \int_a^b \norm^o(0,1)ds - \int_a^b \norm^o(-u',1)ds - \norm^o(D^su,0)(a,b) \le 0. 
\end{multline*}
Thus, all inequalities are in fact equalities, $u^+(a)=u^-(b)=t,$ $u'=0$ a.e. in $(a,b)$ and $D^su=0.$ This implies $u=v$ in $(a,b).$ This observation shows that for any $\lambda\in \R$ the set $\{u=\lambda\}$ is either empty, or one point or an interval. Therefore, $u$ is monotone.
\end{proof}

\begin{example}[Strictly convex anisotropies]
Assume that $\norm^o$ is strictly convex, i.e., 
$$
\norm^o(x+y)<\norm^o(x)+\norm^o(y)\quad\text{whenever $|x|=|y|$ with $x\ne \pm y.$}
$$ 
Then for any interval $I\subseteq\R,$ the function $u\in BV_\loc(I)$ is a local minimizer of $\cA_\norm$ if and only if $u$ is linear. Indeed, by the strict convexity of $\norm,$ for any $N\in \p \Wulff^\norm$ there exists a unique $\nu\in \S^1$ such that $\scalarp{N}{\nu} = \norm^o(\nu).$ Thus, by Theorem \ref{teo:0_loc_minos} $u$ is a local minimizer of $\cA_\norm$ in $I$ if any only if $\Gamma_u$ admits a constant unit normal $\sH^1$-a.e., which is equivalent to say that $u$ is linear.
\end{example}

\begin{example}[Square anisotropy]
Let $\Wulff^\norm=[-1,1]^2$ and $I\subseteq\R$ be an interval, Then $u$ is  a local minimizer of $\cA_\norm$ if and only if $u$ is monotone. Indeed, by Theorem \ref{teo:0_loc_minos} every local minimizer is monotone. Conversely, consider any nondecreasing function $u:I\to\R.$ By monotonicity, the unit normals $\nu_u$ to $\Gamma_u$ lie in the smaller closed arc of $\S^1$ between $-\be_1$ (jump part) and $\be_2$ (constant part). 
Thus, any constant vector $N=(-1,1)\in\p\Wulff^\norm$ satisfies 
$$
\scalarp{N}{\nu_u} = |\scalarp{\nu_u}{\be_1}|+ |\scalarp{\nu_u}{\be_2}| =  \norm^o(\nu_u)\quad \text{$\sH^1$-a.e. on $\Gamma_u.$}
$$
Hence, by Theorem \ref{teo:0_loc_minos}, $u$ is a local minimizer.
\end{example}

\begin{example}[Lens-shaped anisotropies]
Given $a>0,$ let $\gamma\in C^1([-a,0])$ be a strictly increasing concave function with $\gamma(-a)=0$ and $\gamma'(0)=0.$ Let $\Wulff^\norm$ be the convex set symmetric with respect to the 
coordinate axes such that $((-\infty,0)\times(0,+\infty))\cap \p\Wulff^\norm$ is the graph of $\gamma.$ Let $I\subseteq \R$ be an interval. Then $u\in BV_\loc(I)$ is a local minimizer of $\cA_\norm$ in $I$ if any only if either $u$ is linear, or 
$u$ is monotone and piecewise linear, and all segments/half-lines of its graph are tangent\footnote{I.e., their normal belongs to $\p\norm(\be_1).$} to $\Wulff^\norm$ at exactly one of the two points $\pm\frac{\be_1}{\norm(\be_1)}.$
\end{example}

\section{Regularity of $\Lambda$-minimizers}\label{sec:regul_lambda_min}

Now consider the case $\Lambda>0$. In this case a general 
characterization of $\Lambda$-local minimizers as in 
Theorem \ref{teo:0_loc_minos} seems not available. 
In this section, under some assumptions of $\norm,$ we show that if the $L^\infty$-norm of a $\Lambda$-minimizer of $\cA_\norm$ in $I$ is sufficiently small, then $u$ is Lipschitz in $I.$

\begin{theorem}[Regularity of $\Lambda$-minimizers]\label{teo:jums_absent}
Let $\norm$ be  a partially monotone anisotropy such that $\Wulff^\norm$ does not have vertical facets (so that $\pm\be_1$ is an ``outer normal'' to $\Wulff^\norm$ only at $\frac{\pm\be_1}{\norm(\be_1)}$). Given a bounded open interval $I\subset\R$ and $\Lambda>0,$ let $u\in BV_\loc(I)$ be a $\Lambda$-local minimizer of $\cA_\norm$ in $I$ satisfying 
\begin{equation}\label{asdfvv}
\|u\|_\infty < \min\Big\{\tfrac{\alpha_0\norm(\be_1)}{4\Lambda}, \tfrac{\alpha_0}{2\Lambda\norm(\be_2)}, \, \tfrac{|I|\norm(\be_1)}{4\Lambda \norm(\be_2)}\Big\},
\end{equation}
with 
\begin{equation}\label{def:alfa0}
\alpha_0=\alpha_0(\norm):=\tfrac{\cP_\norm(\Wulff^\norm)}{(2\cP_\norm(\Wulff^\norm)+ 1) \,\sqrt{|\Wulff^\norm|} }>0.
\end{equation}
Then $u$ is Lipschitz in $I.$ Moreover, if $\norm$ is $C^2$ and elliptic, then $u\in C^{1,1}(I),$ that is, $u$ is continuously differentiable and its derivative $u'$ is Lipschitz in $I.$
\end{theorem}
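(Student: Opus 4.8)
The plan is to exploit the smallness of $\|u\|_\infty$ together with the $\Lambda$-local minimality to obtain a \emph{uniform} tangent Wulff-shape condition along the generalized graph $\Gamma_u$, and then conclude as in \cite{NP:2002_narwa}. By Corollary \ref{cor:inter_corol}, both $E:=\subgraph(u)$ and its complement are $\Lambda$-local minimizers of $\cP_\norm$ in $I\times\R$; by Remark \ref{rem:finite_variat_local} we may also assume $u\in BV(I)$, and by \eqref{topo_boundo} we may take $E$ to be closed with $\p E=\cl{\p^*E}$. The first main step is to prove the tangent ball estimate: there is $\alpha_0=\alpha_0(\norm)$ (given by \eqref{def:alfa0}) such that, as long as the graph point $x=(s,u(s))$ is far enough from $\p(I\times\R)$ — which is where the third term in \eqref{asdfvv} and the bound on $\|u\|_\infty$ enter, since $\|u\|_\infty$ small keeps $\Gamma_u$ inside a central horizontal strip at definite distance from the lateral boundary on most of $I$ — a translated Wulff shape $\Wulff_r^\norm$ of radius $r$ up to $\alpha_0/\Lambda$ touches $\Gamma_u$ at $x$ from the subgraph side, and likewise one touches from the epigraph side. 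This is the promised Proposition \ref{prop:uniform_wulff_locmin}–type statement: one slides a Wulff shape toward $\Gamma_u$, and if it were blocked before reaching $x$ one compares $E$ with $E\cup \Wulff_r^\norm$ (or $E\setminus\Wulff_r^\norm$), using that $\cP_\norm(\Wulff_r^\norm)=r\,\cP_\norm(\Wulff^\norm)$ and $|\Wulff_r^\norm|=r^2|\Wulff^\norm|$, so that the perimeter gain is of order $r$ while the $\Lambda|E\Delta F|$ penalty is of order $\Lambda r^2$; for $r<\alpha_0/\Lambda$ the minimality inequality is violated, a contradiction. The precise constant $\alpha_0$ in \eqref{def:alfa0} comes from balancing these two terms through the isoperimetric-type inequality $\cP_\norm(\Wulff^\norm)\sqrt{|A|}\le \sqrt{|\Wulff^\norm|}\,(\cP_\norm(A)+\ldots)$ for the "cap" $A$ cut off by the sliding Wulff shape.

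The second step turns the uniform tangent Wulff condition into a bound on the normal. At every point of $\p^*E$ the outer normal $\nu_E$ is a normal to the touching Wulff shape $\Wulff_r^\norm$, and since $\Wulff^\norm$ has \emph{no vertical facets}, the only way $\nu_E=\pm\be_1$ can occur is if the contact point is $\pm\be_1/\norm(\be_1)$; the geometry of a Wulff shape of a fixed radius $r_\ast:=\alpha_0/\Lambda$ touching from both sides then forces
\begin{equation}\label{normal_change}
|\scalarp{\nu_E(x)}{\be_2}|\ge c_1(\norm)>0\qquad\text{for $\sH^1$-a.e.\ }x\in\Gamma_u,
\end{equation}
provided $\|u\|_\infty$ is small enough relative to $r_\ast$ and $|I|$ — precisely the first two terms in \eqref{asdfvv}: the oscillation of $u$ cannot exceed what a Wulff arc of radius $r_\ast$ spanning a horizontal interval of length $\lesssim|I|$ allows, and once $\|u\|_\infty<\tfrac{\alpha_0\norm(\be_1)}{4\Lambda}$ and $\|u\|_\infty<\tfrac{\alpha_0}{2\Lambda\norm(\be_2)}$ the vertical component of $\nu_E$ stays bounded away from $0$. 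Estimate \eqref{normal_change} says the measure-theoretic tangent to $\Gamma_u$ is never vertical, so $\Gamma_u$ has no jump part and no vertical Cantor part; combined with \cite[Lemma 3.10]{NP:2002_narwa} (which upgrades such a one-sided minimizing graph with non-vertical normal to a Lipschitz graph) this yields $u\in\Lip(I)$, with Lipschitz constant controlled by $c_1(\norm)$.

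For the final $C^{1,1}$ assertion, assume $\norm$ is $C^2$ and elliptic. By Proposition \ref{prop:elliptic_anis_propo}(c), every boundary point of $\Wulff^\norm$ admits an interior tangent Euclidean ball $B_{\bar r}$ and an exterior one $B_{1/\bar r}$ touching only there; consequently the tangent $\norm$-ball condition at radius $r_\ast$ along $\Gamma_u$ upgrades to a genuine two-sided tangent \emph{Euclidean} ball condition at a uniform radius $\bar r\, r_\ast$. A set (here $\subgraph(u)$, already known to be a Lipschitz subgraph) satisfying a uniform two-sided interior/exterior ball condition has $C^{1,1}$ boundary, with $C^{1,1}$ norm controlled by the ball radius; writing the boundary locally as a graph gives $u\in C^{1,1}(I)$, i.e.\ $u'$ is Lipschitz. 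I expect the main obstacle to be the bookkeeping in the first step: making the sliding-Wulff-shape argument work \emph{up to radius $\alpha_0/\Lambda$} uniformly in the base point while simultaneously tracking the distance of $\Gamma_u$ from the lateral boundary $\p I\times\R$, which is exactly why all three terms in the $L^\infty$-threshold \eqref{asdfvv} appear and must be reconciled; the normal-deviation estimate \eqref{normal_change} and the appeals to \cite{NP:2002_narwa} and to Proposition \ref{prop:elliptic_anis_propo} are then comparatively routine.
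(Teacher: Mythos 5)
Your overall architecture matches the paper's (uniform tangent Wulff condition $\Rightarrow$ normal bounded away from horizontal $\Rightarrow$ Lipschitz via \cite[Lemma 3.10]{NP:2002_narwa}, then Proposition \ref{prop:elliptic_anis_propo}(c) to upgrade to a Euclidean two-sided ball condition and $C^{1,1}$), and your second and third steps are essentially the paper's Corollary \ref{cor:lip_regular_funcos} and the end of its proof of the theorem. The genuine gap is in your first step, which is the quantitative heart of the argument. Comparing $E$ with $E\cup \Wulff_r^\norm$ (or $E\setminus\Wulff_r^\norm$) when a slid Wulff shape is ``blocked'' does not produce a perimeter gain of order $r$: attaching a whole Wulff ball that merely touches $\p E$ at one point \emph{increases} perimeter by roughly $\cP_\norm(\Wulff_r^\norm)$, so no contradiction with $\Lambda$-minimality arises. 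The correct competitor is $E_u\setminus D$ (resp.\ $E_u\cup D$), where $D$ is the cap enclosed between the graph and the Wulff boundary when the contact set $\cl\Gamma_u\cap\p\Wulff_r^\norm$ has two separated components; the perimeter gain is then not ``of order $r$'' but exactly the content of the paper's Lemma \ref{lem:rel_iso_inq}, namely at least $\alpha_0\min\{\sqrt{|D|},|D|/r\}$ (this two-case isoperimetric comparison is where $\alpha_0$ in \eqref{def:alfa0} actually comes from, and your sketch has no substitute for it), while the penalty is $\Lambda|D|$, not $\Lambda r^2|\Wulff^\norm|$.

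This mis-stated balance is not just cosmetic: under your scheme (``gain $\sim r$ beats $\Lambda r^2$ for $r<\alpha_0/\Lambda$'') the smallness of $\|u\|_\infty$ would play no role in this step, whereas in the paper the first threshold $\|u\|_\infty<\tfrac{\alpha_0\norm(\be_1)}{4\Lambda}$ enters precisely here, through the bound $|D|\le 4\|u\|_\infty r/\norm(\be_1)$ (the cap is trapped in a strip of height $2\|u\|_\infty$ and width $2r/\norm(\be_1)$), which is what makes $\alpha_0\sqrt{|D|}\le\Lambda|D|$ contradictory; without it the argument cannot close, consistently with the jump example $u_{a,b}$ in \eqref{ahst6bnega}. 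You also misplace the third threshold $\tfrac{|I|\norm(\be_1)}{4\Lambda\norm(\be_2)}$: it is not about keeping $\Gamma_u$ away from the lateral boundary, but handles the case in the normal-deviation estimate where the center of the touching Wulff ball falls outside $I\times\R$, by rerunning the argument with the smaller radius $\rho=\tfrac{|I|}{2}\norm(\be_1)$ (the second threshold covers centers inside the strip). Finally, to get a touching ball \emph{at the given point} $x$ from both sides you need the connectedness statement (a)/(b) of Proposition \ref{prop:uniform_wulff_locmin} together with the arcwise connectedness of $\Gamma_u$ (Remark \ref{rem:lip_regular_sets}); sliding alone only gives a ball touching somewhere, and caps reaching $\p I\times\R$ require the truncation $D\cap(J\times\R)$, $J\strictlyincluded I$, which your sketch omits.
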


Note that every partially monotone elliptic anisotropy satisfies the assumption of the theorem. Moreover, by Remark \ref{rem:finite_variat_local}, $u\in BV(I).$ Furthermore, by the partial monotonicity of $\norm$ and Corollary \ref{cor:inter_corol}, the  subgraph and epigraph of $u$ are $\Lambda$-local minimizers of $\cP_\norm$ in $I\times\R.$ 
Notice also 
that when $\Wulff^\norm$ is not a quadrilateral,  Remark \ref{rem:lip_regular_sets} ensures that the boundaries of these sets in $I\times\R$ are locally given by a Lipschitz graph. However, this graphicality property of $\subgraph(u)$ and $\epigraph(u)$ does not yield that $u$ itself is Lipschitz; for instance, $u$ may have jump discontinuities (see the function $u_{a,b}$ in \eqref{ahst6bnega}).
 
The proof of Theorem  \ref{teo:jums_absent} is postponed to the end of the section after some ancillary results. We start with the following property of Wulff shapes.

\begin{lemma}\label{lem:rel_iso_inq}
Let a bounded $D\in BV(\R^2;\{0,1\})$ and a Wulff shape $\Wulff_r^\norm$ with $r>0$ be such that $\Wulff_r^\norm \cap D=\emptyset$ and $\sH^1(\p^* D\cap \p \Wulff_r^\norm)>0.$  Then 
\begin{equation}\label{atertas}
\int_{\p^*D\setminus \p \Wulff_r^\norm}\norm^o(\nu_D)\,d\sH^1 - \int_{\p^*D\cap \p \Wulff_r^\norm} \norm^o(\nu_D)\,d\sH^1 \ge \alpha_0 \, \min\Big\{\sqrt{|D|},\frac{|D|}{r}\Big\}, 
\end{equation}
where $\alpha_0>0$ is given in \eqref{def:alfa0}.
\end{lemma}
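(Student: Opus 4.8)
The plan is a ``filling-in'' argument: set $F:=D\cup\Wulff_r^\norm$ and rewrite the left-hand side of \eqref{atertas} as the honest perimeter difference $\cP_\norm(F)-\cP_\norm(\Wulff_r^\norm)$. For this reduction one uses that $D$ and $\Wulff_r^\norm$ are essentially disjoint and $\Wulff_r^\norm$ is convex with nonempty interior: hence (up to $\sH^1$-null sets) $\p^*D\setminus\p\Wulff_r^\norm\subseteq\R^2\setminus\cl{\Wulff_r^\norm}$, while at $\sH^1$-a.e.\ point of $\p^*D\cap\p^*\Wulff_r^\norm$ the two blow-up half-planes are complementary, so $\nu_D=-\nu_{\Wulff_r^\norm}$ and, by evenness of $\norm^o$, $\norm^o(\nu_D)=\norm^o(\nu_{\Wulff_r^\norm})$ there. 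The standard description of $\p^*(A\cup B)$ for essentially disjoint sets then gives, modulo $\sH^1$-null sets, $\p^*F=(\p^*D\setminus\p\Wulff_r^\norm)\cup(\p^*\Wulff_r^\norm\setminus\p^*D)$, the overlap $\p^*D\cap\p\Wulff_r^\norm$ lying in $F^{(1)}$; summing the corresponding surface integrals,
\[
\cP_\norm(F)=\int_{\p^*D\setminus\p\Wulff_r^\norm}\norm^o(\nu_D)\,d\sH^1+\cP_\norm(\Wulff_r^\norm)-\int_{\p^*D\cap\p\Wulff_r^\norm}\norm^o(\nu_D)\,d\sH^1,
\]
that is, the left-hand side of \eqref{atertas} equals $\cP_\norm(F)-\cP_\norm(\Wulff_r^\norm)$. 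The hypothesis $\sH^1(\p^*D\cap\p\Wulff_r^\norm)>0$ only records that $F$ exceeds $\Wulff_r^\norm$ by a set of positive measure.

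Next I would apply the anisotropic isoperimetric (Wulff) inequality $\cP_\norm(E)\ge\frac{\cP_\norm(\Wulff^\norm)}{\sqrt{|\Wulff^\norm|}}\sqrt{|E|}$ with $E=F$. Using the scaling identities $\cP_\norm(\Wulff_r^\norm)=r\,\cP_\norm(\Wulff^\norm)$, $|\Wulff_r^\norm|=r^2|\Wulff^\norm|$, and the fact that $\Wulff_r^\norm\subseteq F$ forces $|F|=|D|+r^2|\Wulff^\norm|$, one gets
\[
\cP_\norm(F)-\cP_\norm(\Wulff_r^\norm)\ \ge\ \frac{\cP_\norm(\Wulff^\norm)}{\sqrt{|\Wulff^\norm|}}\Big(\sqrt{|D|+r^2|\Wulff^\norm|}-r\sqrt{|\Wulff^\norm|}\Big)=\frac{\cP_\norm(\Wulff^\norm)\,|D|}{\sqrt{|\Wulff^\norm|}\,\big(\sqrt{|D|+r^2|\Wulff^\norm|}+r\sqrt{|\Wulff^\norm|}\big)}.
\]
It then remains to bound the last quantity below by $\alpha_0\min\{\sqrt{|D|},|D|/r\}$. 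By $\sqrt{a+b}\le\sqrt a+\sqrt b$ the denominator is $\le\sqrt{|D|}+2r\sqrt{|\Wulff^\norm|}$, and since $\min\{\sqrt{|D|},|D|/r\}=|D|/\max\{\sqrt{|D|},r\}$ one estimates $\sqrt{|D|}+2r\sqrt{|\Wulff^\norm|}$ by a multiple of $\max\{\sqrt{|D|},r\}$, separating the regimes $r\le\sqrt{|D|}$ and $r\ge\sqrt{|D|}$. Inserting the identity $\cP_\norm(\Wulff^\norm)=2|\Wulff^\norm|$, valid for every Wulff shape, the powers of $|\Wulff^\norm|$ and $r$ reorganise and one is left with a constant of the form \eqref{def:alfa0}.

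I expect the first step — the measure-theoretic identity turning the signed expression on the left of \eqref{atertas} into $\cP_\norm(F)-\cP_\norm(\Wulff_r^\norm)$ — to be the only substantive obstacle: it rests on the fine structure of reduced boundaries of unions together with the density properties of the convex body $\Wulff_r^\norm$, so that the overlapping portions of $\p^*D$ and $\p^*\Wulff_r^\norm$ cancel (here evenness of $\norm^o$ is crucial) and the remaining portion of $\p\Wulff_r^\norm$ is accounted for correctly. After that, the proof is the Wulff inequality (a cited black box) plus an elementary optimisation in the single scalar variable $|D|$, the only real delicacy being the bookkeeping that produces the precise constant $\alpha_0$.
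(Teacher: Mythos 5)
Your first two steps are sound and coincide with the paper's starting point: the identity rewriting the left-hand side of \eqref{atertas} as $\cP_\norm(D\cup \Wulff_r^\norm)-\cP_\norm(\Wulff_r^\norm)$ (disjointness, cancellation of the contact part via $\nu_D=-\nu_{\Wulff_r^\norm}$ and evenness of $\norm^o$) and the application of the Wulff inequality to $D\cup\Wulff_r^\norm$, which gives exactly the paper's intermediate bound $c_\norm|D|\big/\big(\sqrt{|D|+r^2|\Wulff^\norm|}+r\sqrt{|\Wulff^\norm|}\big)$ with $c_\norm:=\cP_\norm(\Wulff^\norm)/\sqrt{|\Wulff^\norm|}$. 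The gap is in the last step, which you dismissed as bookkeeping: the purely elementary estimate $\sqrt{|D|+r^2|\Wulff^\norm|}+r\sqrt{|\Wulff^\norm|}\le (1+2\sqrt{|\Wulff^\norm|})\max\{\sqrt{|D|},r\}$ yields the constant $c_\norm/(1+2\sqrt{|\Wulff^\norm|})$, not $\alpha_0=c_\norm/(1+2\cP_\norm(\Wulff^\norm))$; inserting $\cP_\norm(\Wulff^\norm)=2|\Wulff^\norm|$ (a correct identity in $\R^2$) these two agree only when $|\Wulff^\norm|=1/4$, and your constant is strictly smaller than $\alpha_0$ whenever $|\Wulff^\norm|<1/4$. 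Worse, this is not repairable by smarter algebra within your chain: at $|D|=r^2$ the quantity $c_\norm|D|\big/\big(\sqrt{|D|+r^2|\Wulff^\norm|}+r\sqrt{|\Wulff^\norm|}\big)$ equals $c_\norm r\big/\big(\sqrt{1+|\Wulff^\norm|}+\sqrt{|\Wulff^\norm|}\big)$, which drops below $\alpha_0\min\{\sqrt{|D|},|D|/r\}=\alpha_0 r$ as soon as $\sqrt{1+|\Wulff^\norm|}+\sqrt{|\Wulff^\norm|}>1+4|\Wulff^\norm|$, e.g.\ for any Wulff shape with $|\Wulff^\norm|\le 1/16$ (a small disc does it). So the isoperimetric inequality applied to $D\cup\Wulff_r^\norm$ alone cannot produce the stated constant.

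What the paper does differently, and what your proposal is missing, is a dichotomy in terms of a parameter $\alpha>1$ comparing the ``free'' part $\int_{\p^*D\setminus\p\Wulff_r^\norm}\norm^o(\nu_D)\,d\sH^1$ with the ``contact'' part $\int_{\p^*D\cap\p\Wulff_r^\norm}\norm^o(\nu_D)\,d\sH^1$. When the free part is controlled by $\alpha$ times the contact part, the case hypothesis is used to bound $\sqrt{|D\cup\Wulff_r^\norm|}$ by a multiple of $\cP_\norm(\Wulff_r^\norm)$, i.e.\ the denominator is controlled by perimeter rather than by area, giving a term proportional to $|D|/r$; in the opposite case the isoperimetric inequality is applied to $D$ \emph{alone}, since then most of $\cP_\norm(D)$ sits on the free boundary, giving a term proportional to $\sqrt{|D|}$. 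Optimizing $\alpha=1+1/\cP_\norm(\Wulff^\norm)$ is what produces the specific $\alpha_0$ of \eqref{def:alfa0}. Your argument does prove \eqref{atertas} with the positive constant $\cP_\norm(\Wulff^\norm)\big/\big(\sqrt{|\Wulff^\norm|}(1+2\sqrt{|\Wulff^\norm|})\big)$, which would suffice for the qualitative regularity statements, but not for the lemma as stated nor for the explicit thresholds \eqref{asdfvv} and \eqref{ashfgghh} that are built on $\alpha_0$.
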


\begin{proof}
The proof is similar to \cite[Proposition 6.3]{FFLM:2011_jmpa}.  Recall that the isoperimetric inequality (see e.g. \cite{Fonseca:1991}) says 
\begin{equation}\label{isop_ineq_aniso}
\cP_\norm(E) \ge c_\norm\,\sqrt{|E|},\quad E\in BV(\R^2;\{0,1\}),
\end{equation}
where 
$
c_\norm:=\frac{\cP_\norm(\Wulff^\norm)}{\sqrt{|\Wulff^\norm|}},
$
and the equality in \eqref{isop_ineq_aniso} holds if and only if $E = x+ r\Wulff^\norm=W_r^\norm(x)$ for some $x\in\R^2$ and $r\ge0.$
When $\norm$ is Euclidean, $c_\norm=\sqrt{4\pi}.$

Fix any $\alpha>1.$ First consider the case
\begin{equation}\label{temp1}
\int_{\p^*D\setminus \p \Wulff_r^\norm}\norm^o(\nu_D)\,d\sH^1 \le \alpha \int_{\p^*D\cap \p \Wulff_r^\norm} \norm^o(\nu_D)\,d\sH^1 
\end{equation}
Since $\Wulff_r^\norm \cap D=\emptyset,$ by \cite[Theorem 16.3]{Maggi:2012} one has $\p^*(D\cup \Wulff_r^\norm) \approx_{\sH^1} [\p^*D\setminus \Wulff_r^\norm]\cup [\p^*\Wulff_r^\norm\setminus \p^*D],$ where $A\approx_\mu B$ stands for $\mu(A\Delta B)=0.$ Therefore, 
\begin{multline}\label{tempsodp1}
\int_{\p^*D\setminus \p \Wulff_r^\norm}\norm^o(\nu_D)\,d\sH^1 - \int_{\p^*D\cap \p \Wulff_r^\norm} \norm^o(\nu_D)\,d\sH^1 = 
\int_{\p^*(D\cup \Wulff_r^\norm)}\norm^o(\nu_{D\cup \Wulff_r^\norm})\,d\sH^1 - \int_{\p \Wulff_r^\norm} \norm^o(\nu_{\Wulff_r^\norm})\,d\sH^1 \\
= \cP_\norm(D\cup \Wulff_r^\norm) - \cP_\norm(\Wulff_r^\norm) \ge c_\norm\Big(\sqrt{|D\cup \Wulff_r^\norm|} - \sqrt{|\Wulff_r^\norm|}\Big) = 
\tfrac{c_\norm|D|}{\sqrt{|D\cup \Wulff_r^\norm|} + \sqrt{|\Wulff_r^\norm|}},
\end{multline}
where in the first inequality we used the isoperimetric inequality \eqref{isop_ineq_aniso}.  
Moreover, by \eqref{temp1} and again by the isoperimetric inequality and the assumption $D\cap \Wulff_r^\norm=\emptyset,$ 
\begin{multline*}
c_\norm \sqrt{|D\cup \Wulff_r^\norm|} \le P_\norm(D\cup \Wulff_r^\norm) = 
\int_{\p^*D\setminus \p \Wulff_r^\norm}\norm^o(\nu_D)\,d\sH^1 + \int_{\p^*\Wulff_r^\norm\setminus \p D} \norm^o(\nu_{\Wulff_r^\norm})\,d\sH^1 \\
\le \alpha \int_{\p^*D\cap \p \Wulff_r^\norm} \norm^o(\nu_D)\,d\sH^1 + \int_{\p^*\Wulff_r^\norm\setminus \p D} \norm^o(\nu_{\Wulff_r^\norm})\,d\sH^1  \le \alpha  \int_{\p^*\Wulff_r^\norm} \norm^o(\nu_{\Wulff_r^\norm})d\sH^1 = \alpha \cP_\norm(\Wulff_r^\norm). 
\end{multline*}
Thus, recalling
$
c_\norm\sqrt{|\Wulff_r^\norm|} = \cP_\norm(\Wulff_r^\norm),
$ from \eqref{tempsodp1} we get  
\begin{equation}\label{temp_sol1}
\int_{\p^*D\setminus \p \Wulff_r^\norm}\norm^o(\nu_D)\,d\sH^1 - \int_{\p^*D\cap \p \Wulff_r^\norm} \norm^o(\nu_D)\,d\sH^1  \ge 
\tfrac{c_\norm|D|}{(\alpha +1)\cP_\norm(\Wulff_r^\norm)} = 
\tfrac{|D|}{(\alpha +1)r\sqrt{|\Wulff^\norm|} } .
\end{equation}

\medskip
Now consider the case
\begin{equation}\label{temp2}
\int_{\p^*D\setminus \p \Wulff_r^\norm}\norm^o(\nu_D)\,d\sH^1 > \alpha  \int_{\p^*D\cap \p \Wulff_r^\norm} \norm^o(\nu_D)\,d\sH^1 
\end{equation}
so that 
\begin{equation}\label{kadikal}
\int_{\p^*D\setminus \p \Wulff_r^\norm}\norm^o(\nu_D)\,d\sH^1 - \int_{\p^*D\cap \p \Wulff_r^\norm} \norm^o(\nu_D)\,d\sH^1 > \Big(1-\frac{1}{\alpha }\Big) \int_{\p^*D\setminus \p \Wulff_r^\norm}\norm^o(\nu_D)\,d\sH^1.
\end{equation}
Then by the isoperimetric inequality, \eqref{kadikal} and \eqref{temp2}   we get 
\begin{align*}
c_\norm\,|D|^{1/2} \le & \cP_\norm(D) = \int_{\p^* D\setminus \p \Wulff_r^\norm}\norm^o(\nu_D)\,d\sH^1  + \int_{\p^*D\cap \p \Wulff_r^\norm}\norm^o(\nu_D)\,d\sH^1 \\
\le & \Big(1 + \frac{1}{\alpha }\Big)\,\int_{\p^* D\setminus \p \Wulff_r^\norm}\norm^o(\nu_D)\,d\sH^1  \le 
\frac{\alpha +1}{\alpha -1}\,\Big( \int_{\p^*D\setminus \p \Wulff_r^\norm}\norm^o(\nu_D)\,d\sH^1 - \int_{\p^*D\cap \p \Wulff_r^\norm} \norm^o(\nu_D)\,d\sH^1 \Big).
\end{align*}
Combining this inequality with \eqref{temp_sol1} we deduce
$$
\int_{\p^*D\setminus \p \Wulff_r^\norm}\norm^o(\nu_D)\,d\sH^1 - \int_{\p^*D\cap \p \Wulff_r^\norm} \norm^o(\nu_D)\,d\sH^1  
\ge 
\min\Big\{ \tfrac{(\alpha -1)c_\norm \sqrt{|D|}}{\alpha +1}, 
\tfrac{|D|}{(\alpha +1)r\sqrt{|\Wulff^\norm|}}\Big\}.
$$
Now, choosing $\alpha :=1+\frac{1}{\cP_\norm(\Wulff^\norm)}$ we get \eqref{atertas}.
\end{proof}

Next we ``improve'' the regularity of subgraph and epigraph of $\Lambda$-minimizers $u.$ For simplicity,  let $E_u$ and $F_u$ be the open representatives of $\subgraph(u)$ and $\epigraph(u),$ respectively (see Lemma \ref{lem:density_estimates}), and let 
$$
\Gamma_u:=(I\times \R)\cap \p E_u=(I\times\R)\cap \p F_u
$$
be the (generalized) graph of $u$ in $I.$ By Remark \ref{rem:lip_regular_sets} $\Gamma_u$ is a locally Lipschitz curve\footnote{Possibly out of a discrete set when $\Wulff^\norm$ is a quadrilateral.} (thus an arcwise connected set) and, as the traces of $u$ on $\p I$ are well-defined (see Remark \ref{rem:finite_variat_local}), its topological closure $\cl \Gamma_u$ consists of the union of $\Gamma_u$ and two points on $\p I,$ whose vertical coordinates correspond to the traces  of $u.$

\begin{proposition}[Contact $\norm$-ball condition]\label{prop:uniform_wulff_locmin}
Let $\norm$ be a partially 
monotone anisotropy in $\mathbb R^2$, $I\subset \R$ be a bounded open interval, $\Lambda>0$ and $u\in BV_\loc(I)$ be a $\Lambda$-local minimizer of $\cA_\norm$ in $I$ with 
\begin{equation}\label{u_L_chek_baho}
\|u\|_\infty \le  \frac{\alpha_0\norm(\be_1)}{4\Lambda},
\end{equation}
where $\alpha_0$ is given by \eqref{def:alfa0}. Then for any $r\in(0,\frac{\alpha_0}{\Lambda}):$ 
\begin{itemize}
\item[(a)] if $\oWulff_r^\norm(y)\cap F_u=\emptyset$ with $\Gamma_u\cap \p \oWulff_r^\norm(y)\ne\emptyset,$ then $\cl \Gamma_u \cap \p \Wulff_r^\norm(y)$ is connected (possibly singletons);

\item[(b)] if $\oWulff_r^\norm(z)\cap E_u=\emptyset$ with $\Gamma_u\cap \p \oWulff_r^\norm(z)\ne\emptyset,$ then $\cl \Gamma_u \cap \p \Wulff_r^\norm(z)$ is connected (possibly a singleton);

\item[(c)] for any $x\in\Gamma_u$ there exist $\norm$-balls $\Wulff_r^\norm(y)$ and $\Wulff_r^\norm(z)$ such that  $\oWulff_r^\norm(y)\cap E_u=\emptyset,$ $\oWulff_r^\norm(z)\cap F_u=\emptyset,$ and $\Gamma_u\cap \p\Wulff_r^\norm(y)$ and $\Gamma_u \cap \p \Wulff_r^\norm(z)$ are connected sets (possibly a singleton) containing $x.$

\end{itemize}

\end{proposition}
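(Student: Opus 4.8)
The plan is to prove Proposition \ref{prop:uniform_wulff_locmin} as a ``barrier'' statement, using Lemma \ref{lem:rel_iso_inq} to compare the $\norm$-perimeter of $E_u$ (or $F_u$) with the competitor obtained by truncating with a Wulff shape, and then to run a continuity/connectedness argument in the radius.

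\medskip

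\emph{Step 1: Parts (a) and (b) by an exchange competitor.} Suppose $\oWulff_r^\norm(y)\cap F_u=\emptyset$ with $\Gamma_u\cap\p\oWulff_r^\norm(y)\ne\emptyset,$ and suppose for contradiction that $\cl\Gamma_u\cap\p\Wulff_r^\norm(y)$ is disconnected. Since $\Gamma_u$ is an arcwise connected locally Lipschitz curve (Remark \ref{rem:lip_regular_sets}) with the two endpoints of $\cl\Gamma_u$ lying on $\p I$ (Remark \ref{rem:finite_variat_local}), a disconnection of $\cl\Gamma_u\cap\p\Wulff_r^\norm(y)$ forces $\Gamma_u$ to enter and exit $\oWulff_r^\norm(y)$, and since $\oWulff_r^\norm(y)\cap F_u=\emptyset$ means $\oWulff_r^\norm(y)\subset \cl E_u$, the set $D:=\oWulff_r^\norm(y)\setminus E_u$ is a nonempty bounded set of finite perimeter with $D\cap E_u=\emptyset$ and $\sH^1(\p^*D\cap\p\Wulff_r^\norm(y))>0$; glue $D$ onto $E_u$, i.e. take $F:=E_u\cup D$ as competitor. (For (b) one argues symmetrically with $F_u$ in place of $E_u$, removing the portion of $\oWulff_r^\norm(z)\cap E_u$.) The key point is that passing from $E_u$ to $E_u\cup D$ replaces the arc of $\p^*D$ lying on $\p\Wulff_r^\norm(y)$ (which is \emph{inside} $E_u\cup D$, hence contributes nothing) by the arc of $\p^*D$ lying on $\Gamma_u$; so the change in $\cP_\norm$ is exactly
$$
\cP_\norm(E_u\cup D, \cdot) - \cP_\norm(E_u,\cdot) = \int_{\p^*D\cap\p\Wulff_r^\norm(y)}\norm^o(\nu_D)\,d\sH^1 - \int_{\p^*D\setminus\p\Wulff_r^\norm(y)}\norm^o(\nu_D)\,d\sH^1 \le -\alpha_0\min\Big\{\sqrt{|D|},\tfrac{|D|}{r}\Big\}
$$
by Lemma \ref{lem:rel_iso_inq} (the normals match up correctly precisely because $D$ sits against the outside of $\Wulff_r^\norm(y)$). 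On the other hand, $\Lambda$-local minimality of $E_u$ (Corollary \ref{cor:inter_corol}, applied on a bounded open set compactly containing $D$) gives $\cP_\norm(E_u,\cdot)\le\cP_\norm(E_u\cup D,\cdot)+\Lambda|D|$, so $\alpha_0\min\{\sqrt{|D|},|D|/r\}\le\Lambda|D|$. If $\sqrt{|D|}\le|D|/r$ this forces $|D|\ge(\alpha_0/\Lambda)^2$, and if $|D|/r\le\sqrt{|D|}$ it forces $r\ge\alpha_0/\Lambda$; either way contradicts $r<\alpha_0/\Lambda$ once one also bounds $|D|$ from above. For the latter: $D\subset\oWulff_r^\norm(y)$ with $r<\alpha_0/\Lambda$, and the $L^\infty$-bound \eqref{u_L_chek_baho} on $u$ localizes $\Gamma_u$ near the strip $I\times(-\|u\|_\infty,\|u\|_\infty)$; combined with the fact that $\oWulff_r^\norm(y)$ must meet $\Gamma_u$, this gives $|D|\le |\oWulff_r^\norm(y)|=r^2|\Wulff^\norm|<(\alpha_0/\Lambda)^2|\Wulff^\norm|$, which together with $\alpha_0=\cP_\norm(\Wulff^\norm)/((2\cP_\norm(\Wulff^\norm)+1)\sqrt{|\Wulff^\norm|})$ closes the case $\sqrt{|D|}\le|D|/r$. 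Thus $\cl\Gamma_u\cap\p\Wulff_r^\norm(y)$ is connected, proving (a) and (b).

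\medskip

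\emph{Step 2: Part (c) by sliding a Wulff shape.} Fix $x\in\Gamma_u.$ For the ball tangent from above, consider translates $\Wulff_\rho^\norm(y)$ with $y=x+\rho n$ for the appropriate outward direction $n$ (the one making $\p\Wulff_\rho^\norm(y)$ pass through $x$ and curve away from $E_u$), and let $\rho$ increase from $0.$ For small $\rho$ the Wulff shape is disjoint from $E_u$ by the density estimates of Lemma \ref{lem:density_estimates} and \eqref{topo_boundo} (the complement $F_u$ has positive density at every boundary point, uniformly on compact subintervals). Let $\rho^*$ be the supremum of $\rho\le r$ for which $\oWulff_\rho^\norm(y)\cap E_u=\emptyset$ along a suitably chosen continuous family of centers; if $\rho^*\ge r$ we are done, taking radius $r$. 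If $\rho^*<r,$ then at $\rho=\rho^*$ the Wulff shape $\oWulff_{\rho^*}^\norm(y^*)$ still satisfies $\oWulff_{\rho^*}^\norm(y^*)\cap E_u=\emptyset$ (by closedness of $E_u$, $F_u$ from \eqref{topo_boundo}) and touches $\Gamma_u$ at a second point; but $\rho^*<r<\alpha_0/\Lambda$, so part (a)/(b) applies with this radius and forces $\cl\Gamma_u\cap\p\Wulff_{\rho^*}^\norm(y^*)$ to be connected — a connected subset of $\p\Wulff_{\rho^*}^\norm(y^*)$ containing both $x$ and the second contact point — and one then argues (using that $u$ is monotone-free only locally, or directly that a connected contact set on a Wulff shape against which the graph lies on one side cannot obstruct further sliding) that the family can be continued past $\rho^*$, contradicting maximality. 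Hence $\rho^*\ge r$; the resulting $\norm$-ball $\Wulff_r^\norm(y)$ has $\oWulff_r^\norm(y)\cap E_u=\emptyset$ and $\Gamma_u\cap\p\Wulff_r^\norm(y)$ connected containing $x$ by (a). The ball from below is symmetric, using $F_u$ and part (b); here partial monotonicity of $\norm$ is what allows the same argument for $F_u$ via Corollary \ref{cor:inter_corol}.

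\medskip

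\emph{Main obstacle.} The delicate point is Step 2 — making the sliding argument rigorous when the contact set is not a single point but a connected arc of $\p\Wulff_\rho^\norm$, and ensuring the family of centers can be chosen continuously so that ``$\oWulff_\rho^\norm(y)\cap E_u=\emptyset$'' is an open-and-closed condition up to $\rho=r$. This is exactly where connectedness of $\Gamma_u$, the closedness of $E_u$/$F_u$ from \eqref{topo_boundo}, and parts (a)--(b) (which control what the contact set can look like for every admissible radius, not just at the end) must be combined; the fidelity-free Lemma \ref{lem:rel_iso_inq} does all the quantitative work, but the topological bookkeeping of the barrier family is the part that needs care.
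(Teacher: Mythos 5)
There is a genuine gap in Step 1, and it propagates. Under the hypothesis of (a), the open ball $\oWulff_r^\norm(y)$ is disjoint from $F_u$, hence (inside $I\times\R$) it is contained in $\cl{E_u}$ and cannot be ``entered and exited'' by $\Gamma_u$: every point of $\Gamma_u$ lies in $\p F_u$, so $\Gamma_u\cap\oWulff_r^\norm(y)=\emptyset$. Consequently your set $D:=\oWulff_r^\norm(y)\setminus E_u$ is $\sL^2$-negligible in the strip (and the portion outside $I\times\R$ is not an admissible perturbation), so the competitor $E_u\cup D$ changes nothing; symmetrically, in (b) the set $\oWulff_r^\norm(z)\cap E_u$ you propose to remove is empty by hypothesis. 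Moreover, even formally, Lemma \ref{lem:rel_iso_inq} requires $D\cap\Wulff_r^\norm=\emptyset$, whereas your $D$ sits \emph{inside} the ball. The correct excess region is the one \emph{outside} the ball: the open set enclosed between the subcurves of $\cl\Gamma_u$ and $\p\oWulff_r^\norm(y)$ joining the two extreme contact points of a disconnected contact set; this $D$ is contained in $E_u$, is disjoint from the ball, and the competitor is $E_u\setminus D$. With that $D$ the lemma applies and gives $\alpha_0\min\{\sqrt{|D|},|D|/r\}\le\Lambda|D|$, but then your way of closing the case $\sqrt{|D|}\le|D|/r$ also fails: the bound $|D|\le r^2|\Wulff^\norm|$ does not contradict $|D|\ge(\alpha_0/\Lambda)^2$ unless $|\Wulff^\norm|<1$. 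What is needed (and what the hypothesis \eqref{u_L_chek_baho} is for) is the geometric estimate $|D|\le 4\|u\|_\infty r/\norm(\be_1)$, which uses the symmetry of $\Wulff^\norm$ (the extreme horizontal points of the ball are at the height of its center) and the fact that the contact points lie on the graph; combined with $r<\alpha_0/\Lambda$ this forces $\|u\|_\infty>\alpha_0\norm(\be_1)/(4\Lambda)$, the desired contradiction. You also do not treat the case when $D$ reaches $\p I\times\R$, where the perturbation is not compactly contained in $I\times\R$ and one must truncate $D$ to $D\cap(J\times\R)$ and pass to the limit.

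For (c), your sliding argument is not carried out at its crucial step, as you yourself note. The claim that for small $\rho$ a Wulff shape tangent at $x$ is disjoint from $E_u$ does not follow from the density estimates of Lemma \ref{lem:density_estimates} (volume densities do not exclude a corner at $x$ obstructing every small tangent ball in the chosen direction), and the continuation of the family past the critical radius $\rho^*$ is asserted but not proved. A cleaner route, which avoids pointwise tangency at $x$ altogether, is to consider the set $\Sigma$ of centers $y$ with $\dist_\norm(y,F_u)=\dist_\norm(y,\Gamma_u)=r$: each such ball is admissible for (a), and if no ball centered in $\Sigma$ passed through $x$, the arcwise connectedness of $\Gamma_u$ would produce a ball whose contact set contains two points with $x$ strictly between them on $\Gamma_u$ but $x\notin\p\Wulff_r^\norm(y)$, contradicting the connectedness of the contact set established in (a). As it stands, your proposal does not yield (a), (b) or (c).
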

 
\begin{wrapfigure}[13]{l}{0.3\textwidth}
\includegraphics[width=0.27\textwidth]{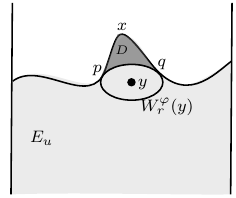}
\caption{\small }\label{fig:wulff_touch}
\end{wrapfigure}
Note that the $\norm$-balls $\oWulff_r^\norm(y)$ and $\oWulff_r^\norm(z)$ need not to lie in $I\times\R.$ The assertion (c) is related to the $r\Wulff^\norm$-condition in the literature, see e.g. \cite[Definition 5]{BCChN:2006}.
 
\begin{proof}
(a) Assume by contradiction that there exist $r\in(0,\frac{\alpha_0}{\Lambda})$ and a $\norm$-ball $\Wulff_r^\norm(y)$ such that $\oWulff_r^\norm(y)\cap F_u=\emptyset$ 
and the intersection $\cl \Gamma_u \cap \p\oWulff_r^\norm(y)$ is not connected. Let us denote by $p\ne q$ the upmost left and upmost right points of this intersection; 
in case there are several upmost left and/or upmost right points we select
 those with the smallest vertical coordinate. Note that $p,q\in \cl{I}\times\R.$

Let $D$ be the nonempty open set enclosed by the subcurves of $\cl \Gamma_u$ and $\p \oWulff_r^\norm(y)$ between $p$ and $q,$ not intersecting $\oWulff_r^\norm(y),$ see Fig. \ref{fig:wulff_touch}. Since $W^\norm$ is symmetric with respect to the coordinate axes, $y\pm r\frac{\be_1}{\norm(\be_1)}$ are the upmost left and upmost right points of $\Wulff_r^\norm(y)$ and hence,
$$
\scalarp{y-r\tfrac{\be_1}{\norm(\be_1)}}{\be_1} \le \min\{\scalarp{p}{\be_1},\scalarp{q}{\be_1}\} \le \max\{\scalarp{p}{\be_1},\scalarp{q}{\be_1}\} \le \scalarp{y-r\tfrac{\be_1}{\norm(\be_1)}}{\be_1}.
$$
Thus, recalling that $p$ and $q$ lie on the graph of $u,$
$$
D\subset \Big[\scalarp{y}{\be_1}-\tfrac{r}{\norm(\be_1)},\scalarp{y}{\be_1} + \tfrac{r}{\norm(\be_1)}\Big]\times [-\|u\|_\infty,\|u\|_\infty]
$$
and therefore 
\begin{equation}\label{atsz6fhff}
0<|D| \le \frac{4\|u\|_\infty r}{\norm(\be_1)}.
\end{equation}

First assume that 
$$
D\strictlyincluded I\times\R.
$$ 
Then by $\Lambda$-minimality, for any open set $\Omega'\strictlyincluded I\times\R,$ compactly contaning $D\subset E_u,$ we have
\begin{equation}\label{gstzfvb}
\cP_\norm(E_u,\Omega') \le \cP_\norm(E_u\setminus D,\Omega') + \Lambda|D|.
\end{equation}
Since $D\cap \Wulff_r^\norm (y) =\emptyset$ and $\sH^1(\p^*D\cap \p 
\Wulff_r^\norm(y)
)>0$ (because $\Wulff_r^\norm(y)$ 
touches $\p E_u$ at two different points), we can apply Lemma \ref{lem:rel_iso_inq} to get 
\begin{equation*}
\cP_\norm(E_u,\Omega') -\cP_\norm(E_u\setminus D,\Omega') = 
\int_{\p^*D\setminus \p \Wulff_r^\norm(y)}\norm^o(\nu_D)\,d\sH^1 - \int_{\p^*D\cap \p \Wulff_r^\norm(y)} \norm^o(\nu_D)\,d\sH^1 \ge \alpha_0 \, \min\Big\{\sqrt{|D|},\frac{|D|}{r}\Big\}, 
\end{equation*}
and thus 
\begin{equation}\label{ahczs6gg}
\alpha_0 \, \min\Big\{\sqrt{|D|},\frac{|D|}{r}\Big\} \le \Lambda|D|.
\end{equation} 
Now, if $\sqrt{|D|}\le \frac{|D|}{r},$ then by \eqref{ahczs6gg}, \eqref{atsz6fhff} and the  assumption $r<\frac{\alpha_0}{\Lambda}$ we have 
$$
\alpha_0 \le \Lambda\sqrt{|D|} < \Lambda\sqrt{ \tfrac{4\|u\|_\infty \alpha_0 }{\Lambda\norm(\be_1)} }\qquad\text{so that}\qquad \|u\|_\infty > \tfrac{\alpha_0\norm(\be_1)}{4\Lambda},
$$
which contradicts  \eqref{u_L_chek_baho}. On the other hand, if $\frac{|D|}{r}<\sqrt{|D|},$ then again by \eqref{ahczs6gg} and the choice of $r,$
$$
\frac{\alpha_0}{\Lambda} \le r<\frac{\alpha_0}{\Lambda},
$$
a contradiction.
\smallskip 

In case 
$$
D\cap (\p I\times\R) \ne\emptyset,
$$ 
we fix $\epsilon>0$ and choose an interval $J\strictlyincluded I$ with $|I\setminus J|<\epsilon$ and replace $D$ with $D_\epsilon:=D\cap ( J\times \R).$ Note that for small $\epsilon,$ $D_\epsilon$ is non-empty and satisfies $D_\epsilon\cap \Wulff_r^\norm (y) =\emptyset$ and $\sH^1(\p^*D_\epsilon\cap \p \Wulff_r^\norm(y))>0.$ Thus, we can use $E_u\setminus D_\epsilon$ as a competitor in \eqref{gstzfvb} to get \eqref{ahczs6gg} with $D_\epsilon$ in place of $D.$  Now letting $\epsilon\to0^+$ we conclude \eqref{ahczs6gg} and the remaining part of the contradictory argument runs as above. 
\smallskip

(b) is proven as (a).
\smallskip

(c) Fix any $x\in\Gamma_u,$  $r\in (0,\frac{\alpha_0}{\Lambda})$ and consider the set 
$$
\Sigma:=\{y\in\R^2:\,\,\dist_\norm(y,F_u)=\dist_\norm(y,\Gamma_u)=r\}.
$$ 
Note that $\Sigma $ contains two little arcs outside the strip $I\times\R,$ and hence, tangent balls may have centers outside it. Note that for any $y\in \Sigma$ the $\norm$-ball $\oWulff_r^\norm(y)$ is ``tangent'' to $\cl \Gamma_u$ at some point  and  does not intersect $F_u.$ In view of (a) and (b), it suffices    to show that there exists $\bar y\in\Sigma$ such that $x\in \p\Wulff_r^\norm(\bar y).$ Indeed, otherwise, as $\Gamma_u$ is a graph (an arcwise connected set), we could find $y\in \Sigma$ such that $ \Gamma_u \cap \p\Wulff_r^\norm(y)$ contains two distinct points $p$ and $q,$ and  $x$ lies in the relative interior of the subcurve of $\Gamma_u$ with endpoints $p$ and $q,$ but does not belong to $\p\Wulff_r^\norm(y).$ However, by (a), the set $\cl \Gamma_u \cap \p \Wulff_r^\norm(y)$ is connected, and hence $x\in \p\Wulff_r^\norm(y),$ a contradiction. 

For a similar reason, for any $x\in\Gamma_u$ and $r\in (0,\frac{\alpha_0}{\Lambda})$ there exists $\Wulff_r^\norm(y)$ such that $\oWulff_r^\norm(y)\cap E_u=\emptyset$ and $x\in \Gamma_u \cap \p\Wulff_r^\norm(y).$
\end{proof}

One corollary of Proposition  \ref{prop:uniform_wulff_locmin} 
is the following lipschitzianity of $u.$

\begin{corollary}\label{cor:lip_regular_funcos}
Let $\norm$ be  a partially monotone anisotropy such that $\Wulff^\norm$ does not have vertical facets and, 
given a bounded open interval $I\subset\R$ and $\Lambda>0,$ let $u\in BV_\loc(I)$ be a $\Lambda$-local minimizer of $\cA_\norm$ in $I$ satisfying \eqref{asdfvv}. Then $u$ is Lipschitz in $I.$
\end{corollary}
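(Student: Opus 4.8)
The plan is to turn the contact $\norm$-ball condition of Proposition~\ref{prop:uniform_wulff_locmin} into a uniform lower bound for the vertical component of the generalized normal to $E_u$ along $\Gamma_u$, and then to invoke \cite[Lemma~3.10]{NP:2002_narwa}. First I would record the set-up: since \eqref{asdfvv} in particular forces $\|u\|_\infty<\tfrac{\alpha_0\norm(\be_1)}{4\Lambda}$, which is exactly \eqref{u_L_chek_baho}, Remark~\ref{rem:finite_variat_local} gives $u\in BV(I)$, Corollary~\ref{cor:inter_corol} gives that the open subgraph $E_u$ and epigraph $F_u$ are $\Lambda$-local minimizers of $\cP_\norm$ in $I\times\R$, and Proposition~\ref{prop:uniform_wulff_locmin}(c) applies: for every $x\in\Gamma_u$ and every $r\in(0,\tfrac{\alpha_0}{\Lambda})$ there are $\norm$-balls $\Wulff_r^\norm(y)$ and $\Wulff_r^\norm(z)$ through $x$ with $\oWulff_r^\norm(y)\cap E_u=\emptyset$ and $\oWulff_r^\norm(z)\cap F_u=\emptyset$. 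Since $E_u$ is a subgraph, $\langle\nu_{E_u},\be_2\rangle\ge0$ holds $\sH^1$-a.e. on $\p^*E_u\cap(I\times\R)$, which coincides $\sH^1$-a.e. with $\Gamma_u$; so it remains to push this quantity off $0$.

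The core step is to show that there is $c_0=c_0(\norm)>0$ with $\langle\nu_{E_u}(x),\be_2\rangle\ge c_0$ for $\sH^1$-a.e. $x=(s_0,u(s_0))\in\Gamma_u$. I would argue by contradiction at a density point of $\p^*E_u$. If $\langle\nu_{E_u}(x),\be_2\rangle<c_0$, then $\nu_{E_u}(x)$ is close to $+\be_1$ or to $-\be_1$ (the closeness improving as $c_0\to0^+$); assume the former, the other case being symmetric. Internal tangency of the two $\norm$-balls at $x$ (the ball $\Wulff_r^\norm(z)$ from inside $\cl{E_u}$, the ball $\Wulff_r^\norm(y)$ from inside $\cl{F_u}$) gives, after blow-up, $\nu_{E_u}(x)\in\p\norm^o\!\big(\tfrac{x-z}{r}\big)$ and $-\nu_{E_u}(x)\in\p\norm^o\!\big(\tfrac{x-y}{r}\big)$. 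Because $\Wulff^\norm$ has no vertical facet, $\pm\be_1$ is a normal to $\Wulff^\norm$ only at $\pm\tfrac{\be_1}{\norm(\be_1)}$, so $x$ is within $r\,\omega(c_0)$ of the rightmost point of $\Wulff_r^\norm(z)$ and of the leftmost point of $\Wulff_r^\norm(y)$, for a modulus $\omega(c_0)\to0^+$ depending only on $\norm$. Thus $\Wulff_r^\norm(z)$ lies essentially to the left of the vertical line through $x$ and $\Wulff_r^\norm(y)$ essentially to its right, each with a topmost/bottommost point at horizontal distance $\approx\tfrac{r}{\norm(\be_1)}$ from $s_0$ and at vertical distance $\approx\tfrac{r}{\norm(\be_2)}$ above/below $u(s_0)$. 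Using $\oWulff_r^\norm(z)\cap F_u=\emptyset$ at the abscissa $s_0-\tfrac{r}{\norm(\be_1)}$, or $\oWulff_r^\norm(y)\cap E_u=\emptyset$ at the abscissa $s_0+\tfrac{r}{\norm(\be_1)}$ (whichever lies in $I$), forces the oscillation of $u$ on an interval through $s_0$ of length $\le\tfrac{2r}{\norm(\be_1)}$ to be $\gtrsim\tfrac{r}{\norm(\be_2)}$, hence $2\|u\|_\infty\gtrsim\tfrac{r}{\norm(\be_2)}$. Choosing $r$ in the range $\big(2\norm(\be_2)\|u\|_\infty,\tfrac{\alpha_0}{\Lambda}\big)$, which is nonempty since $\|u\|_\infty<\tfrac{\alpha_0}{2\Lambda\norm(\be_2)}$, and small enough that $\tfrac{2r}{\norm(\be_1)}<|I|$ so that at least one of the two abscissae falls in $I$ (here the last bound in \eqref{asdfvv} enters), gives the contradiction once $c_0=c_0(\norm)$ has been fixed small.

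Granting this estimate, $E_u$ is the subgraph of a function whose generalized normal stays uniformly away from $\pm\be_1$; in particular $u$ has neither jump nor Cantor part, and by \cite[Lemma~3.10]{NP:2002_narwa} it is Lipschitz in $I$ with $\|u'\|_\infty\le\tfrac{\sqrt{1-c_0^2}}{c_0}$. I expect the main obstacle to be the bookkeeping in the oscillation estimate: converting ``tangency to a $\norm$-ball near one of its horizontal extreme points'' into a clean two-point lower bound for $u$, uniformly in the position of $s_0$ inside $I$ — in particular when $x$ is close to $\p I$, where the $\norm$-balls may stick out of the strip $I\times\R$ and all three smallness hypotheses in \eqref{asdfvv} are needed simultaneously — together with verifying that the threshold $c_0$ can indeed be chosen depending on $\norm$ alone.
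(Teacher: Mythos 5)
Your proposal follows essentially the same route as the paper: the contact $\norm$-ball condition of Proposition \ref{prop:uniform_wulff_locmin}(c), combined with the absence of vertical facets, is used to push $\scalarp{\nu_{E_u}}{\be_2}$ uniformly away from $0$ on $\Gamma_u$ by comparing the vertical extent $\approx \tfrac{r}{\norm(\be_2)}$ of a tangent Wulff ball with $2\|u\|_\infty$, exactly as in the proof of \eqref{normal_change}, and then \cite[Lemma 3.10]{NP:2002_narwa} gives the Lipschitz bound. The only (inessential) difference is how you handle points near $\p I$: you shrink $r$ below $\tfrac{|I|\norm(\be_1)}{2}$ so that one of the abscissae $s_0\pm\tfrac{r}{\norm(\be_1)}$ stays in $I$, while the paper keeps $r$ close to $\tfrac{\alpha_0}{\Lambda}$ and, when the ball's center leaves the strip, reapplies the tangency with the smaller radius $\rho=\tfrac{|I|}{2}\norm(\be_1)$ — both variants use the third bound in \eqref{asdfvv} in the same way.
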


\begin{proof}
For simplicity, suppose  $I=(-a,a)$ for some $a>0.$ We claim that there exists $\lambda>0$ such that
\begin{equation}\label{normal_change}
\scalarp{\nu_{E_u} }{\be_2}\ge\lambda \quad\text{$\sH^1$-a.e. on $ (I\times\R)\cap \p^* E_u$},
\end{equation}
where $\nu_{E_u}$ is the generalized outer unit normal to $E_u.$ Indeed, by contradiction, assume that there exists a sequence $(x_k)\subset (I\times\R)\cap \p^* E_u$ such that $\scalarp{\nu_{E_u}(x_k)}{\be_2}\to0.$ Possibly passing to a not relabelled subsequence,  replacing $u$ with $-u$ and changing the orientation of $I$ (i.e., using the mirror symmetry with respect to the vertical axis) if necessary, we may assume $(x_k)\subset(-a,0]\times\R$ and $\nu_{E_u}(x_k)\cdot \be_1\to -1$ as $k\to+\infty.$
\begin{figure}[htp!]
\includegraphics[width=0.7\textwidth]{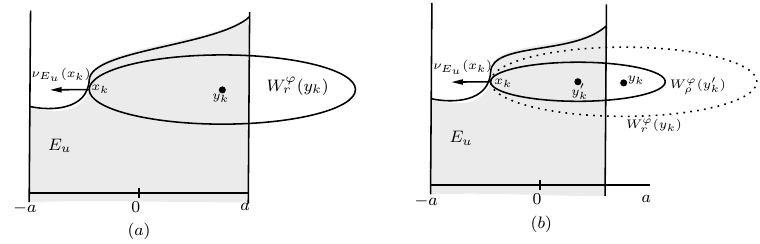}
\caption{\small }\label{fig:onct}
\end{figure}

Given $\epsilon\in(0,1),$ from Proposition \ref{prop:uniform_wulff_locmin} (c) 
select $r:=\frac{\alpha_0}{(1+\epsilon)\Lambda}$ and 
$\Wulff_r^\norm(y_k)$ so that $\oWulff_r^\norm(y_k)\cap F_u=\emptyset$ and $x_k\in \p\Gamma_u\cap \p\Wulff_r^\norm(y_k).$ 
Since $\nu_{E_u}(x_k)$ is an ``outer normal'' also to $\Wulff_r^\norm(y_k) $ at $x_k$ and $\Wulff^\norm$ has no vertical facets, it follows that 
$$
\scalarp{x_k-y_k}{\be_2} \to0.
$$ 
In particular, 
\begin{equation}\label{hstdzvb}
x_k-y_k = s_k \be_1 + t_k\be_2 
\quad \text{with $s_k\to \tfrac{r}{\norm(\be_1)}$ and $t_k\to0.$}
\end{equation}

First assume that, up to a not relabelled subsequence, $y_k\in (-a,a]\times\R$ for all $k\ge1,$ see Fig. \ref{fig:onct} (a). Then,  recalling that $\oWulff_r^\norm(y_k)\cap (I\times\R)\subset E_u,$ we have 
\begin{equation}\label{noski_bros_stirk}
\|u\|_\infty \ge 
\scalarp{y_k + \tfrac{r\be_2}{\norm(\be_2)}}{\be_2} = \scalarp{y_k}{\be_2} + \tfrac{r}{\norm(\be_2)} = \scalarp{x_k}{\be_2} + \tfrac{r}{\norm(\be_2)}-t_k,
\end{equation}
where $y_k + \tfrac{r\be_2}{\norm(\be_2)}$ is the top point of $\Wulff_r^\norm(y_k)$ in the vertical direction and in the last equality we used \eqref{hstdzvb}. As $x_k$ lies on the graph of $u,$
$
\scalarp{x_k}{\be_2} \ge -\|u\|_\infty,
$
and thus, from \eqref{noski_bros_stirk} and the definition of $r$ we deduce 
$$
2\|u\|_\infty \ge \tfrac{\alpha_0}{(1+\epsilon)\Lambda\norm(\be_2)}-t_k.
$$
Now first letting $k\to+\infty$ and then $\epsilon\to0^+$ we deduce 
$$
\|u\|_\infty \ge \tfrac{\alpha_0}{2\Lambda \norm(\be_2)},
$$
which contradicts  \eqref{asdfvv}.

Now assume that $y_k\in(a,+\infty)\times\R$ for all $k\ge1,$ see Fig. \ref{fig:onct} (b). In this case, by the partial monotonicity of $\norm,$ we have 
$
r>\rho:=a\norm(\be_1)
$
and hence $\rho<\frac{\alpha_0}{\Lambda}.$
Applying Proposition \ref{prop:uniform_wulff_locmin} (c) with $r=\rho$ we can find $\Wulff_\rho^\norm(y_k')$ such that $\oWulff_\rho^\norm(y_k')\cap F_u=\emptyset$ with $x_k\in \Gamma_u\cap \p \Wulff_\rho^\norm(y_k').$
As above, $\scalarp{x_k-y_k'}{\be_2} \to0,$ i.e., 
\begin{equation*}
x_k-y_k' = s_k' \be_1 + t_k'\be_2 
\quad \text{with $s_k'\to \tfrac{\rho}{\norm(\be_1)}$ and $t_k'\to0.$}
\end{equation*}
Since $\oWulff_\rho^\norm(y_k')\cap (I\times\R)\subset E_u,$ we can repeat the same arguments leading to \eqref{noski_bros_stirk} to get
$$
\|u\|_\infty \ge 
\scalarp{y_k' + \tfrac{\rho \be_2}{\norm(\be_2)}}{\be_2} = \scalarp{y_k'}{\be_2} + \tfrac{\rho}{\norm(\be_2)} = \scalarp{x_k}{\be_2} + \tfrac{\rho}{\norm(\be_2)}-t_k' \ge -\|u\|_\infty + \tfrac{\rho}{\norm(\be_2)}-t_k'. 
$$
Now letting $k\to+\infty$ and recalling the definition of $\rho$ we get 
$$
\|u\|_\infty \ge \tfrac{a\norm(\be_1)}{2\Lambda \norm(\be_2)} = 
\tfrac{|I|\norm(\be_1)}{4\Lambda \norm(\be_2)},
$$
which again contradicts   \eqref{asdfvv}.
\smallskip

These contradictions show the validity of \eqref{normal_change}.  In view of \eqref{normal_change} and \cite[Lemma 3.10]{NP:2002_narwa}, it follows that $\Gamma_u:=(I\times\R)\cap \p E_u$ is the graph of a Lipschitz function (with a Lipschitz constant $\sqrt{1/\lambda^2-1}$) in the vertical direction. Thus, $u$ admits a Lipschitz representative.
\end{proof}

Now we are ready to prove the regularity of $\Lambda$-local minimizers.

\begin{proof}[Proof of Theorem \ref{teo:jums_absent}]
By Corollary \ref{cor:lip_regular_funcos}, $u$ is Lipschitz in $I.$ 
Assume now $\norm$ is $C^2,$ elliptic and  partially monotone. Then so is $\norm^0.$ Moreover, the boundary of $\Wulff^\norm$ is a closed $C^2$-curve without segments. By Proposition  \ref{prop:uniform_wulff_locmin}, the subgraph $E_u:=\subgraph(u)$ of $u$ satisfies uniform\footnote{The radii of the tangent Wulff shapes can be choosen a constant $r\in(0,\frac{\alpha_0}{\Lambda})$ along the graph of $u.$} interior and exterior $\norm$-ball conditions at every point of $(I\times\R) \cap \p E_u.$  
In view of Proposition \ref{prop:elliptic_anis_propo} (c) this implies that $E_u$  and $F_u$ satisfies the classical ball condition of radius $\rho>0$ with a suitable $\rho>0$ depending only on $\alpha_0,$ $\Lambda$ and the constant $\bar r$ in Proposition \ref{prop:elliptic_anis_propo} (c). This allows us to obtain an $L^\infty$-bound for the second derivative of $u$ in terms of $1/\rho,$ which yields $u'$ is also Lipschitz in $I,$ see for instance   \cite[Section 2]{JN:2024_arma} for details. This and the lipschitzianity of $u$ imply $u\in C^{1,1}(I).$ 
\end{proof}

\subsection{Some generalizations}

In this section we relax the $\Lambda$-local minimality assumption on $u$ in Theorem \ref{teo:jums_absent}. To this aim, we start with the following 

\begin{definition}[$(\gamma,\Lambda)$-local minimizer]\label{def:cond_locmin}
Given an anisotropy $\norm$ in $\R^2,$ $\gamma>0,$  
$\Lambda\ge0,$ and a bounded open interval $I\subset\R,$ we say a function $u\in BV_\loc(I)\cap L^\infty(I)$ is a \emph{$(\gamma,\Lambda)$-local minimizer} provided that its subgraph $E_u:=\subgraph(u)$ satisfies 
\begin{equation}\label{dashstdfg}
\cP_\norm(E_u,\Omega) \le \cP_\norm(F,\Omega) + \Lambda|E_u\Delta F|  
\end{equation}
for any open set $\Omega\strictlyincluded I\times (-\gamma,\gamma)$ and $F\in BV_\loc(I\times\R;\{0,1\})$ with $E_u\Delta F\strictlyincluded \Omega.$
\end{definition}

Note that $(\gamma,\Lambda)$-local minimizers are not necessarily $\Lambda$-local minimizers, as local perturbations are taken only in $I\times (-\gamma,\gamma).$ Still, we can readily check that the density estimates in Lemma \ref{lem:density_estimates} and properties in \eqref{topo_boundo} hold, and therefore we can speak about closed and open representatives of $E_u.$ 
Moreover, in case $I$ and $u$ are bounded, we can apply \eqref{dashstdfg} with the set $D$ in the proof of Proposition  \ref{prop:uniform_wulff_locmin} provided for instance 
\begin{equation}\label{ming_dollasd}
\gamma > \frac{\alpha_0\norm(\be_1)}{2\Lambda};
\end{equation} 
for such $\gamma,$ if $\norm$ is partially monotone and $u$ satisfies \eqref{u_L_chek_baho}, all assertions  of Proposition  \ref{prop:uniform_wulff_locmin} are valid. This was sufficient to prove Theorem \ref{teo:jums_absent}. Thus, we have shown:

\begin{theorem}\label{teo:relaxed_jump}
Let $\norm$ be a partially monotone anisotropy in $\R^2$ such that $\Wulff^\norm$ does not have vertical facets,  $\gamma>0$ satisfy \eqref{ming_dollasd}, $\Lambda>0$ and $I\subset\R$ be a bounded interval. Then every $(\gamma,\Lambda)$-local minimizer $u$ of $\cA_\norm$ in  $I,$ satisfying the $L^\infty$-bound \eqref{asdfvv}, is Lipschitz in $I.$ If, additionally, $\norm$ is elliptic and $C^2,$ then $u\in C^{1,1}(I).$
\end{theorem}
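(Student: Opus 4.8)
The plan is to reduce Theorem \ref{teo:relaxed_jump} to Theorem \ref{teo:jums_absent} by showing that a $(\gamma,\Lambda)$-local minimizer enjoys all the structural properties that were actually used in the proof of Theorem \ref{teo:jums_absent}, despite not being a genuine $\Lambda$-local minimizer. The point is that the proof of Theorem \ref{teo:jums_absent} — via Corollary \ref{cor:lip_regular_funcos} and Proposition \ref{prop:uniform_wulff_locmin} — never perturbs $E_u$ outside a thin horizontal strip: the competitor sets $E_u\setminus D$ (and $E_u\setminus D_\epsilon$) constructed in the proof of Proposition \ref{prop:uniform_wulff_locmin} differ from $E_u$ only inside the region $[\,\cdot\,,\cdot\,]\times[-\|u\|_\infty,\|u\|_\infty]$, by the key containment $D\subset[\langle y,\be_1\rangle-\tfrac{r}{\norm(\be_1)},\langle y,\be_1\rangle+\tfrac{r}{\norm(\be_1)}]\times[-\|u\|_\infty,\|u\|_\infty]$ coming from partial monotonicity.

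First I would observe that, since $u$ satisfies the $L^\infty$-bound \eqref{asdfvv}, in particular $\|u\|_\infty<\tfrac{\alpha_0\norm(\be_1)}{4\Lambda}$, so $2\|u\|_\infty<\tfrac{\alpha_0\norm(\be_1)}{2\Lambda}<\gamma$ by \eqref{ming_dollasd}; hence the perturbation region $[\cdot,\cdot]\times[-\|u\|_\infty,\|u\|_\infty]$ is contained in $I\times(-\gamma,\gamma)$. Consequently, for every competitor $F$ used in the arguments of Proposition \ref{prop:uniform_wulff_locmin}, the set $E_u\Delta F$ is compactly contained in some open $\Omega\Subset I\times(-\gamma,\gamma)$, so inequality \eqref{dashstdfg} applies verbatim and yields precisely the estimate \eqref{gstzfvb} (and the $D_\epsilon$ variant). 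One then reruns the contradiction argument of Proposition \ref{prop:uniform_wulff_locmin} word for word — the isoperimetric Lemma \ref{lem:rel_iso_inq}, the volume bound \eqref{atsz6fhff}, and the case split on $\sqrt{|D|}$ versus $|D|/r$ — to conclude that assertions (a), (b), (c) of Proposition \ref{prop:uniform_wulff_locmin} hold for $(\gamma,\Lambda)$-local minimizers under the stated hypotheses.

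Next I would note that the density estimates of Lemma \ref{lem:density_estimates} and the consequences \eqref{topo_boundo}, \eqref{rem:lip_regular_sets} are local in nature and only involve perturbations by filling-in or cutting-out with small Euclidean balls, which live in an arbitrarily small neighborhood of a boundary point; since any fixed point of $\Gamma_u=(I\times\R)\cap\p E_u$ lies in $I\times(-\|u\|_\infty,\|u\|_\infty)\subset I\times(-\gamma,\gamma)$, these small-ball competitors are again admissible in \eqref{dashstdfg}. Hence $E_u$ (and $F_u$) admit open/closed representatives and $\Gamma_u$ is a locally Lipschitz curve exactly as before. With Proposition \ref{prop:uniform_wulff_locmin} in hand, the proof of Corollary \ref{cor:lip_regular_funcos} goes through unchanged: one derives the uniform bound \eqref{normal_change} on $\langle\nu_{E_u},\be_2\rangle$ by the same three-case analysis (interior touching ball, right-escaping ball, using $\rho=a\norm(\be_1)$), each case producing a contradiction with \eqref{asdfvv}; then \cite[Lemma 3.10]{NP:2002_narwa} gives that $\Gamma_u$ is the graph of a Lipschitz function, i.e. $u$ is Lipschitz in $I$.

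Finally, for the $C^{1,1}$ statement under the additional assumption that $\norm$ is $C^2$ and elliptic, I would reproduce the argument in the proof of Theorem \ref{teo:jums_absent}: the uniform interior and exterior $\norm$-ball condition at every point of $\Gamma_u$ (just established) combined with Proposition \ref{prop:elliptic_anis_propo}(c) upgrades to a classical two-sided ball condition of some radius $\rho>0$ depending only on $\alpha_0$, $\Lambda$, $\bar r$, which yields an $L^\infty$-bound on the second derivative of $u$ and hence $u\in C^{1,1}(I)$, invoking \cite[Section 2]{JN:2024_arma}. The main obstacle — really the only point requiring care — is verifying that every competitor appearing in the cited proofs is supported in $I\times(-\gamma,\gamma)$; this is exactly what the chain of inequalities $2\|u\|_\infty<\tfrac{\alpha_0\norm(\be_1)}{2\Lambda}<\gamma$ guarantees, and it is where the hypothesis \eqref{ming_dollasd} on $\gamma$ is used. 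Everything else is a verbatim repetition of Sections preceding this one, which is why the theorem is stated with the remark that the proof has, in effect, already been given.
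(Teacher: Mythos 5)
Your proposal is correct and takes essentially the same route as the paper: the authors likewise justify Theorem \ref{teo:relaxed_jump} by noting that every competitor used in Lemma \ref{lem:density_estimates}, Proposition \ref{prop:uniform_wulff_locmin} and hence Corollary \ref{cor:lip_regular_funcos} perturbs $E_u$ only inside $I\times(-\gamma,\gamma)$ once \eqref{ming_dollasd} and the $L^\infty$-bound \eqref{asdfvv} hold, so the proof of Theorem \ref{teo:jums_absent} (including the $C^{1,1}$ upgrade via Proposition \ref{prop:elliptic_anis_propo}(c)) carries over verbatim. Your verification of the containment $2\|u\|_\infty<\tfrac{\alpha_0\norm(\be_1)}{2\Lambda}<\gamma$ is exactly the point the paper relies on.
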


\section{Applications}\label{sec:application}

\subsection{Minimizers of the perturbed area}

Let $\norm$ be an anisotropy in $\R^2,$  $I \subset\R$ be a bounded open interval and $g\in L^\infty(I).$ Given $p\ge1,$ consider the functional in \eqref{G_phi_pgI}, i.e., 
$$
\cG(u):=\cA_\norm(u,I) + \int_I|u-g|^pds,\quad u\in L^1(I),
$$
where we set $\cG(u)=+\infty$ if $u\notin BV(I)$ or $u-g\notin L^p(I).$ 

\begin{lemma}\label{lem:exist_unique}
There exists a minimizer $u\in L^1(I)$ of $\cG.$ Moreover, $u\in BV(I)\cap L^\infty(I)$ and $\|u\|_\infty \le \|g\|_\infty.$ Finally, if $p>1,$ then $u$ is unique.
\end{lemma}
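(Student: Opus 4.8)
The plan is to use the direct method of the calculus of variations together with a truncation argument. First I would establish existence of a minimizer. Since $g\in L^\infty(I)$, the constant function $u\equiv 0$ is admissible, so $\inf\cG\le \cG(0)=\cA_\norm(0,I)+\int_I|g|^pds = \norm^o(\be_2)\,|I| + \|g\|_p^p<+\infty$, and in particular the infimum is finite and minimizing sequences $(u_n)$ can be taken with $\cG(u_n)$ bounded. From the bound on $\cA_\norm(u_n,I)$ and the lower bound $\norm^o(-u_n',1)\ge \norm^o(\be_2)+c'|u_n'|$ (equivalently the comparison $\norm^o\ge c|\cdot|$ from \eqref{norm_bounds}) one controls the total variation $|Du_n|(I)$, and from the bound on $\int_I|u_n-g|^pds$ together with $g\in L^1(I)$ one controls $\|u_n\|_{L^1(I)}$; hence $(u_n)$ is bounded in $BV(I)$. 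By $BV$ compactness, up to a subsequence $u_n\to u$ in $L^1(I)$ and a.e., with $u\in BV(I)$. Lower semicontinuity of $u\mapsto \cA_\norm(u,I)$ with respect to $L^1$-convergence (it is a supremum of continuous linear functionals, cf. the definition of the $\norm$-total variation) and Fatou's lemma applied to $\int_I|u_n-g|^pds$ give $\cG(u)\le\liminf_n\cG(u_n)=\inf\cG$, so $u$ is a minimizer.

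Next I would prove the $L^\infty$-bound $\|u\|_\infty\le\|g\|_\infty$ by a truncation/comparison argument. Set $M:=\|g\|_\infty$ and let $\widetilde u:=\max\{-M,\min\{M,u\}\}$ be the truncation of $u$ at level $M$. Then $\widetilde u\in BV(I)$ with $|D\widetilde u|(I)\le|Du|(I)$; more precisely, truncation does not increase the $\norm$-total variation, so $\cA_\norm(\widetilde u,I)\le\cA_\norm(u,I)$ — this follows because on the part of the graph where $u$ is truncated the generalized graph of $\widetilde u$ is flat (horizontal), and using \eqref{relaxing} together with $\norm^o(0,1)\le\norm^o(-u',1)$ pointwise and the fact that truncation can only decrease jumps; alternatively, via Lemma \ref{lem:area_equal_perimeter}, $\subgraph(\widetilde u)=\subgraph(u)\cap(I\times(-M,M))$ up to sets projecting outside, and intersecting with a slab whose boundary is flat (horizontal facets, hence compatible with any $\norm$ having $\norm^o(\be_2)$ finite) does not increase $\norm$-perimeter. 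At the same time $|\widetilde u-g|\le|u-g|$ a.e. because $g$ takes values in $[-M,M]$ and truncating $u$ towards that interval can only decrease the distance to $g(s)$. Hence $\cG(\widetilde u)\le\cG(u)$, and since $u$ is a minimizer, equality holds throughout; in particular $\int_I|\widetilde u-g|^pds=\int_I|u-g|^pds$, which together with $|\widetilde u-g|\le|u-g|$ forces $|\widetilde u-g|=|u-g|$ a.e., and then on $\{|u|>M\}$ this is only possible if that set is null. Thus $\|u\|_\infty\le M=\|g\|_\infty$. Note that any minimizer is automatically in $BV(I)$ since $\cG(u)<+\infty$ forces it, and then in $L^\infty(I)$ by the above.

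Finally, for uniqueness when $p>1$, I would use strict convexity. The map $u\mapsto\int_I|u-g|^pds$ is strictly convex on $L^p(I)$ when $p>1$ (as $t\mapsto|t|^p$ is strictly convex), while $u\mapsto\cA_\norm(u,I)$ is convex (it is a supremum of affine functionals). Hence $\cG$ is strictly convex on the convex set $\{u\in BV(I):u-g\in L^p(I)\}$, so it has at most one minimizer: if $u_1\ne u_2$ were both minimizers, then $\cG(\tfrac{u_1+u_2}{2})<\tfrac12\cG(u_1)+\tfrac12\cG(u_2)=\inf\cG$, a contradiction.

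I expect the main obstacle to be the careful verification that truncation does not increase the anisotropic area $\cA_\norm$ — one must handle the singular part of $Du$ (Cantor and jump parts) in \eqref{relaxing} and confirm that cutting the graph at the horizontal levels $\pm M$ is compatible with the anisotropy (here it is enough that $\norm^o(\be_2)<\infty$, which holds by \eqref{norm_bounds}, since the new pieces of boundary are horizontal); the cleanest route is via the subgraph formulation of Lemma \ref{lem:area_equal_perimeter}, reducing the claim to the statement that intersecting a set of finite $\norm$-perimeter with a horizontal slab does not increase its $\norm$-perimeter, which is a standard cutting lemma.
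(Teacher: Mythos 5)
Your proposal is correct and follows essentially the same route as the paper: direct method with $BV$-compactness, lower semicontinuity plus Fatou for existence; a truncation argument reduced, via Lemma \ref{lem:area_equal_perimeter}, to the fact that cutting the (sub/epi)graph with a horizontal half-plane does not increase $\cP_\norm$ (the paper performs this one level at a time on $\epigraph(u)$, citing the cutting-with-half-spaces lemma, exactly your ``cleanest route''); and strict convexity of the $L^p$-term for uniqueness when $p>1$. Two small inaccuracies in your side remarks: the pointwise bound $\norm^o(0,1)\le\norm^o(-u',1)$ used in your alternative argument via \eqref{relaxing} requires partial monotonicity of $\norm$, which is \emph{not} assumed in this lemma (the half-plane cutting argument, by contrast, works for every anisotropy), and the identity $\subgraph(\widetilde u)=\subgraph(u)\cap(I\times(-M,M))$ is not correct as written --- rather $\subgraph(\widetilde u)=\big(\subgraph(u)\cup(I\times(-\infty,-M))\big)\cap(I\times(-\infty,M))$, i.e.\ one cuts above and fills below, each operation being perimeter-nonincreasing (also the parenthetical $\norm^o(-t,1)\ge\norm^o(\be_2)+c'|t|$ fails near $t=0$; the correct coercivity is $\norm^o(-Du,1)\ge c|Du|-C$, as you also note). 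None of these affects the validity of your main argument.
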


Note that if $p=1,$ then in general minimizers are not unique, see  the Introduction.

\begin{proof}
The proof is standard and we provide it for completeness. Let $(u_k) \subset L^1(I)$ be a minimizing sequence. We may assume $\cG(u_k) \le \cG(0)$ for all $k,$ therefore, 
$$
\cA_\norm(u_k,I)\le \cG(0)\quad \text{and}\quad \int_I |u-g|^pds\le \cG(0).
$$
By the convexity of $\norm^o$ and \eqref{norm_bounds} we have 
$$
\cG(0)\ge \cA_\norm(u_k,I) \ge \cV_\norm(u_k,I) - \norm^o(\be_2)|I| \ge c \int_I |Du_k| - \norm^o(\be_2)|I|.
$$
Moreover, by the H\"older inequality,
$$
\int_I|u_k|ds \le \int_I|u_k-g|ds +\|g\|_\infty|I| \le  \Big(\int_I|u_k-g|^pds\Big)^{1/p} |I|^{1-\frac{1}{p}} \|g\|_\infty|I| \le (\cG(0))^{1/p}+\|g\|_\infty|I|.
$$
Thus, the sequence $(u_k)_k$ is bounded in $BV(I)$ and by the $L^1$-compactness in $BV,$ up to a not relabelled  subsequence, $u_k\to u$  in $L^1(I)$ for some $u\in BV(I).$ By the Riesz-Fischer lemma, we may also assume $u_k\to u$ a.e. in $I.$ Then by  the $L_\loc^1(I)$-lower semicontinuity of $\cA_\norm(\cdot,I),$
$$
\liminf_{k\to+\infty} \cA_\norm(u_k,I) \ge \cA_\norm(u,I)
$$
and by the Fatou's  lemma 
$$
\liminf_{k\to+\infty} \int_I|u_k-g|^pds \ge \int_I|u-g|^pds.  
$$
Thus, $u \in BV(I)$ is a minimizer of $\cG.$ 

To show that $\|u\|_\infty \le \|g\|_\infty,$ let
$
v:=\max\{u,-\|g\|_\infty\}.
$
Since $|u-g| \ge|v-g|$ a.e. in $I,$ we have 
$$
\int_I|u-g|^pds \ge \int_I|v-g|^pds
$$
with the strict inequality if the set $\{u<-\|g\|_\infty\}$ has positive measure.
Moreover, by Lemma \ref{lem:area_equal_perimeter} 
$$
\int_I\norm^o(-Du,1) = \cP_\norm(\subgraph(u),I\times\R)
=\cP_{\norm}(\epigraph(u),I\times\R).
$$ 
Since 
$$
\cP_{\norm}(\epigraph(v),I\times\R) = 
\cP_{\norm}(\epigraph(u) \cap [I\times (-\|g\|_\infty,+\infty)],I\times\R) \le 
\cP_{\norm}(\epigraph(u),I\times\R),
$$
where in the last inequality we used a cutting with half-spaces argument, see e.g. \cite{BKhN:2017_cpaa}, it follows that 
$$
\cA_\norm(u,I) = \cP_{\norm}(\epigraph(u),I\times\R) \ge \cP_{\norm}(\epigraph(v),I\times\R) \ge \cA_\norm(v,I).
$$
Thus $\cG(u)\ge \cG(v)$ with the strict inequality if the set $\{u<-\|g\|_\infty\}$ has positive measure. Then the minimality of $u$ implies $u\ge -\|g\|_\infty$ a.e. in $I.$ Similarly, we can show $u\le \|g\|_\infty$ a.e. in $I.$

The uniqueness of $u$ in case $p>1$ directly follows from the strict convexity of the $L^p$-norm.
\end{proof}

\begin{remark}\label{rem:truncation}
In fact, 
$$
-\|g^-\|_\infty \le -\|u^-\|_\infty \le \|u^+\|_\infty \le \|g^+\|_\infty,
$$
where $a^\pm=\max\{\pm a,0\}$ are the positive and negative parts of $a.$
\end{remark}

The next proposition establishes a bridge between minimizers of $\cG$ and $(\gamma,\Lambda)$-minimizers of $\cA_\norm.$

\begin{proposition}\label{prop:cond_locminimi}
Assume that $\norm$ is partially monotone and let $u\in BV(I)\cap L^\infty(I)$ be  a minimizer of $\cG.$ Then $u$ is a  $(\gamma,\Lambda)$-minimizer of $\cA_\norm$ in $I$ for any $\gamma>2\|g\|_\infty,$ where 
$$
\Lambda:=p(\gamma + \|g\|_\infty)^{p-1}.
$$
\end{proposition}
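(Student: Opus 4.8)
The plan is to test the minimality of $u$ for $\cG$ against the competitor obtained from an arbitrary competing set by vertical rearrangement — the same device used in the second half of the proof of Proposition~\ref{prop:locmin_func_set} — and then to absorb the difference of the two $L^p$-fidelity terms into $\Lambda$. By Lemma~\ref{lem:exist_unique}, $u\in BV(I)\cap L^\infty(I)$ with $\|u\|_\infty\le\|g\|_\infty<\gamma$, so only inequality \eqref{dashstdfg} needs to be checked.

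So I would fix $\gamma>2\|g\|_\infty$, an open set $\Omega\strictlyincluded I\times(-\gamma,\gamma)$ and $F\in BV_\loc(I\times\R;\{0,1\})$ with $E_u\Delta F\strictlyincluded\Omega$, where $E_u:=\subgraph(u)$. Since $\|u\|_\infty<\gamma$, up to a negligible set $E_u\supseteq I\times(-\infty,-\gamma]$ and $E_u\cap(I\times[\gamma,+\infty))=\emptyset$, and, because $F=E_u$ off $\Omega\subseteq I\times(-\gamma,\gamma)$, the same holds for $F$. Let $K\strictlyincluded I$ be the first-coordinate projection of $E_u\Delta F$, choose an open interval $J$ with $K\strictlyincluded J\strictlyincluded I$, fix $a<-\gamma<\gamma<b$, and let $v(s):=a+\sH^1(\{t\in(a,b):(s,t)\in F\})$ be the vertical rearrangement of $F$ in $(a,b)$. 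Repeating the arguments of Proposition~\ref{prop:locmin_func_set} (see also \cite[Section~4]{BKhN:2017_cpaa}) one obtains $v\in BV_\loc(I)$, $\supp(u-v)\subseteq\overline K\strictlyincluded J$, the identity $\int_{J'}|u-v|\,ds=|(E_u\Delta F)\cap(J'\times\R)|$ for all $J'\in\cO_b(I)$, and — here partial monotonicity of $\norm^o$ is used — $\cP_\norm(\subgraph(v),J\times\R)\le\cP_\norm(F,J\times\R)$. From the level structure of $F$ at $\pm\gamma$ one reads off $\|v\|_\infty\le\gamma$, hence $v-g\in L^\infty(I)\subseteq L^p(I)$; combined with $v=u$ off $\overline K$ and $\cP_\norm(\subgraph(v),J\times\R)<+\infty$ this also gives $v\in BV(I)$. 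Thus $v$ is an admissible competitor for $\cG$.

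Next I would localise both quantities. Since $\overline{E_u\Delta F}$ is a compact subset of both $\Omega$ and $J\times\R$ and $\chi_{E_u}=\chi_F$ off $E_u\Delta F$, the measures $\norm^o(\nu_{E_u})\sH^1\res\p^*E_u$ and $\norm^o(\nu_F)\sH^1\res\p^*F$ agree off $\overline{E_u\Delta F}$, so $\cP_\norm(E_u,\Omega)-\cP_\norm(F,\Omega)=\cP_\norm(E_u,J\times\R)-\cP_\norm(F,J\times\R)$; and since the signed measure $\cA_\norm(u,\cdot)-\cA_\norm(v,\cdot)$ is supported in $\supp(u-v)\subseteq J$, Lemma~\ref{lem:area_equal_perimeter} gives $\cA_\norm(u,I)-\cA_\norm(v,I)=\cA_\norm(u,J)-\cA_\norm(v,J)=\cP_\norm(E_u,J\times\R)-\cP_\norm(\subgraph(v),J\times\R)$. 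Chaining these with the rearrangement inequality above and with the minimality $\cG(u)\le\cG(v)$ yields
$$
\cP_\norm(E_u,\Omega)-\cP_\norm(F,\Omega)\le\cA_\norm(u,I)-\cA_\norm(v,I)\le\int_I|v-g|^p\,ds-\int_I|u-g|^p\,ds.
$$
Finally, for a.e. $s$ one has $|u(s)-g(s)|,\,|v(s)-g(s)|\le\gamma+\|g\|_\infty$ and $\big||v(s)-g(s)|-|u(s)-g(s)|\big|\le|u(s)-v(s)|$, so the elementary inequality $|A^p-B^p|\le p\,(\max\{A,B\})^{p-1}|A-B|$ bounds the last difference by $p(\gamma+\|g\|_\infty)^{p-1}\int_I|u-v|\,ds=\Lambda\,|E_u\Delta F|$, where the final equality is the rearrangement identity with $J'=I$. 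This proves \eqref{dashstdfg}.

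I expect the only real obstacle to be the rearrangement step — verifying that $v$ is genuinely an admissible competitor (of bounded variation with $\|v\|_\infty\le\gamma$, so that the fidelity term is finite) and that passing from $F$ to $\subgraph(v)$ does not increase the $\norm$-perimeter. However this is exactly the computation already carried out for the second assertion of Proposition~\ref{prop:locmin_func_set}, and it transfers without change; the localisation of the perimeter/area and the $L^p$-estimate are then routine.
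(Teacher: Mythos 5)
Your argument is correct and follows essentially the same route as the paper's own proof: test the minimality of $\cG$ against the vertical rearrangement $v$ of $F$ (borrowing the estimates $\int_I|u-v|\,ds=|E_u\Delta F|$ and $\cP_\norm(\subgraph(v),\cdot)\le\cP_\norm(F,\cdot)$ from Proposition \ref{prop:locmin_func_set}), note $\|v\|_\infty\le\gamma$ because $F$ cannot cross the horizontal lines $\{x_2=\pm\gamma\}$, and absorb the difference of the $L^p$-fidelity terms via the Lipschitz bound $|A^p-B^p|\le p(\gamma+\|g\|_\infty)^{p-1}|A-B|$, which is exactly the paper's estimate \eqref{uszc7bb}. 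Your extra care in localizing the perimeters to $\Omega$ versus $J\times\R$ is a harmless elaboration of a step the paper leaves implicit.
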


\begin{proof}
By Lemma \ref{lem:exist_unique}, $\|u\|_\infty\le \|g\|_\infty.$ Let $E_u:=\subgraph(u)$ be the subgraph of $u$ and consider any $F\in BV_\loc(I\times\R;\{0,1\})$ with $E_u\Delta F\strictlyincluded I\times (-\gamma,\gamma).$ Let $v$ be the vertical rearrangement of $F$ as in the proof of Proposition \ref{prop:locmin_func_set}. By construction, $v\in BV_\loc(I),$ $\supp(u-v)\strictlyincluded I$ and 
hence $v\in BV(I).$ In addition, by the Fubini-Tonelli theorem and the  partial monotonicity of $\norm,$
\begin{equation}\label{agctsfg}
|E_u\Delta F| = \int_I |u-v|ds\quad\text{and}\quad \cP_\norm(F,I\times\R) \ge \cP_\norm(\subgraph(v),I\times\R) = \cA_\norm(v,I),
\end{equation}
see for instance the proof of Proposition \ref{prop:locmin_func_set}. 
Moreover, since $\gamma>2\|u\|_\infty$ and $F$ does not cross the horizontal sides of the rectangle $I\times (-\gamma,\gamma),$ we have $\|v\|_\infty < \gamma.$ Thus, by the minimality of $u$ and \eqref{agctsfg},
\begin{multline}\label{uszc7bb}
\cP_\norm(E_u,I\times\R) = \cA_\norm(u,I) \le \cA_\norm(v,I) + \int_I\Big(|v-g|^p -|u-g|^p\Big)ds \\
\le \cP_\norm(F,I\times\R) + p\max\{|v-g|^{p-1},|u-g|^{p-1}\} \int_I|u-v|ds \le \cP_\norm(F,I\times\R) + \Lambda |E_u\Delta F|,
\end{multline}
where in the last inequality we used 
\begin{align*}
\max\{|v-g|^{p-1},|u-g|^{p-1}\}\le &  \max\{(\|v\|_\infty + \|g\|_\infty)^{p-1},(\|u\|_\infty + \|g\|_\infty)^{p-1}\} \\
\le & \max\{(\gamma + \|g\|_\infty)^{p-1},(2\|g\|_\infty)^{p-1}\} \le(\gamma + \|g\|_\infty)^{p-1}.
\end{align*}
Since $E_u\Delta F\strictlyincluded I\times(-\gamma,\gamma),$ comparing \eqref{uszc7bb} with \eqref{dashstdfg} we conclude that $u$ is a  $(\gamma,\Lambda)$-local minimizer of $\cA_\norm.$
\end{proof}

From Proposition \ref{prop:cond_locminimi} and Theorem \ref{teo:relaxed_jump} we deduce the following 

\begin{theorem}[Regularity of minimizers]\label{teo:regular_minimizers}
Let $\norm$ be a partially monotone anisotropy in 
$\mathbb R^2$ such that $\Wulff^\norm$ does not have vertical facets, $I\subset\R$ be a bounded open interval and $p\ge1.$ Let 
\begin{equation}\label{ashfgghh}
\sigma: = \Big( \tfrac{1}{4^{p-1}p}\,\min\Big\{\tfrac{\alpha_0 \norm(\be_1)}{4},\, \tfrac{\alpha_0} {2\norm(\be_2)},\, \tfrac{|I|\norm(\be_1)}{4\norm(\be_2)}\Big\}\Big)^{1/p},
\end{equation}
where $\alpha_0>0$ is defined in \eqref{def:alfa0}. Let $g\in L^\infty(I)$ be such that 
\begin{equation}\label{hstfbvznn}
\|g\|_\infty <\sigma.
\end{equation}
Then every minimizer $u$ of the functional $\cG$ in \eqref{G_phi_pgI} is Lipschitz in $I.$ Moreover, if $\norm$ is elliptic and $C^2,$ then $u\in C^{1,1}(I).$ 
\end{theorem}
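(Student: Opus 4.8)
The plan is to deduce Theorem~\ref{teo:regular_minimizers} directly by combining the structural results already established: Lemma~\ref{lem:exist_unique}, Proposition~\ref{prop:cond_locminimi}, and Theorem~\ref{teo:relaxed_jump}. First I would let $u$ be any minimizer of $\cG$; by Lemma~\ref{lem:exist_unique} we have $u\in BV(I)\cap L^\infty(I)$ with $\|u\|_\infty\le\|g\|_\infty$. The whole proof is then a matter of choosing the parameter $\gamma$ in Proposition~\ref{prop:cond_locminimi} cleverly so that the resulting $\Lambda$-bound is compatible both with the hypothesis \eqref{ming_dollasd} of Theorem~\ref{teo:relaxed_jump} and with the $L^\infty$-bound \eqref{asdfvv}, under the sole assumption $\|g\|_\infty<\sigma$ with $\sigma$ given by \eqref{ashfgghh}.

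The natural choice, which makes the algebra clean, is to take $\gamma$ comparable to $\|g\|_\infty$, say $\gamma=2\|g\|_\infty$ (or any $\gamma$ slightly larger, then pass to the infimum); Proposition~\ref{prop:cond_locminimi} then gives that $u$ is a $(\gamma,\Lambda)$-local minimizer of $\cA_\norm$ in $I$ with
\[
\Lambda=p(\gamma+\|g\|_\infty)^{p-1}\le p\,(3\|g\|_\infty)^{p-1}.
\]
Actually it is cleaner to keep $\gamma$ arbitrary with $\gamma>2\|g\|_\infty$ and send $\gamma\downarrow 2\|g\|_\infty$ at the end, since the conclusion of Theorem~\ref{teo:relaxed_jump} is a property of $u$ alone. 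With $\gamma\to 2\|g\|_\infty^+$ we get, in the limiting sense, a valid $(\gamma,\Lambda)$-minimality for every $\gamma>2\|g\|_\infty$ with $\Lambda=\Lambda(\gamma)\to p(3\|g\|_\infty)^{p-1}$; hmm, to avoid this limiting subtlety I would instead just fix $\gamma := 3\|g\|_\infty$ when $\|g\|_\infty>0$ (the case $g\equiv 0$ being trivial since then $u\equiv 0$). Wait — I should double-check: with $\gamma=3\|g\|_\infty$ one has $\gamma+\|g\|_\infty=4\|g\|_\infty$, so $\Lambda=p(4\|g\|_\infty)^{p-1}=4^{p-1}p\,\|g\|_\infty^{p-1}$, which is exactly the quantity whose reciprocal appears in the definition \eqref{ashfgghh} of $\sigma^p$. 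This is clearly the intended choice.

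The remaining step is to verify the two required inequalities. For \eqref{ming_dollasd}: we need $\gamma>\frac{\alpha_0\norm(\be_1)}{2\Lambda}$, i.e. $3\|g\|_\infty>\frac{\alpha_0\norm(\be_1)}{2\cdot 4^{p-1}p\|g\|_\infty^{p-1}}$, which rearranges to $\|g\|_\infty^p>\frac{\alpha_0\norm(\be_1)}{6\cdot 4^{p-1}p}$ — so this goes the wrong way and would require $\|g\|_\infty$ \emph{large}, not small. Hence $\gamma=3\|g\|_\infty$ does \emph{not} satisfy \eqref{ming_dollasd} when $\|g\|_\infty$ is small. The correct reading is that $\gamma$ should be chosen \emph{independently} of $\|g\|_\infty$ — large enough to satisfy \eqref{ming_dollasd} — while still giving a usable $\Lambda$. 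The cleanest device: take $\gamma$ to be any number with $\gamma>\max\{2\|g\|_\infty,\ \frac{\alpha_0\norm(\be_1)}{2\Lambda}\}$; but $\Lambda$ depends on $\gamma$, so one must solve the fixed-point-type constraint $2p\,\gamma(\gamma+\|g\|_\infty)^{p-1}>\alpha_0\norm(\be_1)$, which holds for all sufficiently large $\gamma$. So: \textbf{fix} $\gamma$ large (depending on $\norm,p,\|g\|_\infty$) so that \eqref{ming_dollasd} holds, noting $\Lambda=p(\gamma+\|g\|_\infty)^{p-1}$. Then Theorem~\ref{teo:relaxed_jump} applies \emph{provided} $u$ also satisfies the $L^\infty$-bound \eqref{asdfvv} with this $\gamma,\Lambda$. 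And here is where $\sigma$ enters: since $\|u\|_\infty\le\|g\|_\infty$, it suffices that $\|g\|_\infty<\min\{\frac{\alpha_0\norm(\be_1)}{4\Lambda},\frac{\alpha_0}{2\Lambda\norm(\be_2)},\frac{|I|\norm(\be_1)}{4\Lambda\norm(\be_2)}\}$, i.e. $\Lambda\,\|g\|_\infty<\min\{\frac{\alpha_0\norm(\be_1)}{4},\frac{\alpha_0}{2\norm(\be_2)},\frac{|I|\norm(\be_1)}{4\norm(\be_2)}\}=:M$. Now bound $\Lambda\|g\|_\infty=p(\gamma+\|g\|_\infty)^{p-1}\|g\|_\infty$; choosing $\gamma\le 3\|g\|_\infty$ would give $\le 4^{p-1}p\|g\|_\infty^p<4^{p-1}p\,\sigma^p=M$ by \eqref{ashfgghh} — so I genuinely want $\gamma$ small (comparable to $\|g\|_\infty$) for the $L^\infty$-bound but large for \eqref{ming_dollasd}.

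\textbf{Resolution of the tension, and the main obstacle.} The two constraints are reconcilable precisely because \eqref{ming_dollasd} is needed only to be able to \emph{use the competitor set $D$} from the proof of Proposition~\ref{prop:uniform_wulff_locmin}, and that set $D$ lives in the strip $I\times[-\|u\|_\infty,\|u\|_\infty]$; re-examining the excerpt, Theorem~\ref{teo:relaxed_jump} already packages this and asks only for $\gamma$ with \eqref{ming_dollasd} \emph{together with} \eqref{asdfvv}, and \eqref{asdfvv} forces $\|u\|_\infty$ (hence $\|g\|_\infty$) small, which via $\Lambda=p(\gamma+\|g\|_\infty)^{p-1}$ and \eqref{ming_dollasd} ($\gamma>\frac{\alpha_0\norm(\be_1)}{2\Lambda}$, i.e. $2p\gamma(\gamma+\|g\|_\infty)^{p-1}>\alpha_0\norm(\be_1)$) does admit a solution $\gamma$ in a nonempty interval: as $\gamma$ ranges over $(2\|g\|_\infty,\infty)$, $\Lambda\|g\|_\infty$ increases continuously from $p(3\|g\|_\infty)^{p-1}\|g\|_\infty$ toward $\infty$ while the lower constraint on $\gamma$ from \eqref{ming_dollasd} is eventually met; since $p(3\|g\|_\infty)^{p-1}\|g\|_\infty<4^{p-1}p\sigma^p=M$ under \eqref{hstfbvznn}, there is indeed $\gamma$ in the (nonempty, by a continuity/intermediate-value argument) range where \emph{both} \eqref{ming_dollasd} and $\Lambda\|g\|_\infty<M$ hold. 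Thus I would: (i) dispose of $g\equiv0$; (ii) invoke the intermediate-value argument to pick a single admissible $\gamma$; (iii) apply Proposition~\ref{prop:cond_locminimi} to get $(\gamma,\Lambda)$-minimality; (iv) apply Theorem~\ref{teo:relaxed_jump} to conclude $u$ is Lipschitz, and $C^{1,1}$ when $\norm$ is elliptic and $C^2$. I expect the \emph{only} real obstacle to be the bookkeeping in step~(ii) — verifying that the interval of admissible $\gamma$ is nonempty exactly under \eqref{hstfbvznn} with the specific $\sigma$ of \eqref{ashfgghh}; everything else is a direct citation of the preceding results. (If the authors instead simply take $\gamma=3\|g\|_\infty$ and quietly observe that the proof of Theorem~\ref{teo:relaxed_jump} only uses \eqref{ming_dollasd} to validate the competitor in the strip $|t|\le\|u\|_\infty\le\|g\|_\infty<\gamma/3<\gamma$ — i.e. \eqref{ming_dollasd} is used as a \emph{sufficient} condition that is here replaced by the weaker, and satisfied, requirement $\gamma>2\|u\|_\infty$ — then step~(ii) collapses and the computation $\Lambda\|g\|_\infty=p(4\|g\|_\infty)^p/(4\|g\|_\infty)\cdot4\|g\|_\infty=4^{p-1}p\|g\|_\infty^p<M$ finishes it immediately; I would present it this way if the strip-localization is indeed all that is needed.)
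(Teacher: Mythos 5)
Your overall route (Lemma \ref{lem:exist_unique} $\to$ Proposition \ref{prop:cond_locminimi} $\to$ Theorem \ref{teo:relaxed_jump}) is exactly the paper's, and your closing parenthetical is in substance the paper's proof: the paper fixes $\gamma:=3\sigma$ and $\Lambda:=(4\sigma)^{p-1}p$, so that \eqref{ashfgghh} becomes $\sigma=\min\{\alpha_0\norm(\be_1)/(4\Lambda),\,\alpha_0/(2\Lambda\norm(\be_2)),\,|I|\norm(\be_1)/(4\Lambda\norm(\be_2))\}$ and $\|u\|_\infty\le\|g\|_\infty<\sigma$ yields \eqref{asdfvv} immediately. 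The argument you actually commit to, however, has a genuine gap: the intermediate-value selection of $\gamma$ satisfying simultaneously \eqref{ming_dollasd} and $\Lambda\|g\|_\infty<M$ (with $M:=4^{p-1}p\,\sigma^p$ the min in \eqref{ashfgghh}) can fail because the admissible window may be empty. For $p>1$, the constraint $p(\gamma+\|g\|_\infty)^{p-1}\|g\|_\infty<M$ caps $\gamma$ from above; when $\|g\|_\infty$ is close to $\sigma$ it forces $\gamma+\|g\|_\infty\lesssim 4\sigma$, hence $\Lambda\lesssim(4\sigma)^{p-1}p$, and then \eqref{ming_dollasd} would require roughly $6\cdot4^{p-1}p\,\sigma^p=6M>\alpha_0\norm(\be_1)$, which is false whenever $M<\alpha_0\norm(\be_1)/6$ — for instance whenever $|I|<\tfrac23\alpha_0\norm(\be_2)$, since then the third term in the min is below $\alpha_0\norm(\be_1)/6$. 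So the nonemptiness you assert ``by continuity'' is not true in general, and this step is not mere bookkeeping.

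The correct repair is the one you offer only tentatively at the end, and it is how the argument must be read: \eqref{ming_dollasd} enters only as a sufficient condition guaranteeing that the competitor $D\subset\cl{I}\times[-\|u\|_\infty,\|u\|_\infty]$ from the proof of Proposition \ref{prop:uniform_wulff_locmin} is an admissible perturbation inside $I\times(-\gamma,\gamma)$ (horizontal compact containment being handled by the $D_\epsilon$ trimming), so what is really needed, besides \eqref{u_L_chek_baho}/\eqref{asdfvv}, is just $\gamma>\|u\|_\infty$ (your $\gamma>2\|u\|_\infty$ certainly suffices). With $\gamma=3\sigma$ as in the paper — or your $\gamma=3\|g\|_\infty$ after disposing of $g\equiv0$ — this holds since $\|u\|_\infty\le\|g\|_\infty<\sigma<\gamma$, and $\Lambda\|u\|_\infty\le 4^{p-1}p\|g\|_\infty^p<4^{p-1}p\,\sigma^p=M$ gives \eqref{asdfvv}; Theorem \ref{teo:relaxed_jump} then yields the Lipschitz and $C^{1,1}$ conclusions. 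Note that the paper's own sentence ``thus $\gamma$ satisfies \eqref{ming_dollasd}'' suffers from exactly the arithmetic obstruction you stumbled on (for $\gamma=3\sigma$ it is equivalent to $M>\alpha_0\norm(\be_1)/6$, which fails e.g. for small $|I|$), so the strip-containment reading is not an optional shortcut: it is the justification, consistent with the ``provided for instance \eqref{ming_dollasd}'' phrasing preceding Theorem \ref{teo:relaxed_jump}. Had you committed to that version (and dropped the faulty intermediate-value step), your proof would coincide with the paper's.
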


\begin{proof}
Let $\gamma:=3\sigma$ and $\Lambda:=(\gamma+ \sigma)^{p-1}p = (4\sigma)^{p-1}p>0.$ By \eqref{ashfgghh},
$$
\sigma = \min\Big\{\tfrac{\alpha_0\norm(\be_1)}{4\Lambda},\,\tfrac{\alpha_0}{2\Lambda \norm(\be_2)},\,\tfrac{|I|\norm(\be_1)}{4\Lambda \norm(\be_2)}\Big\}.
$$
Thus, $\gamma$ satisfies \eqref{ming_dollasd}. Since $\|g\|_\infty < \sigma=\gamma/3,$ by Proposition \ref{prop:cond_locminimi}  $u$ is a  $(\gamma,\Lambda)$-local minimizer of $\cA_\norm$ in $I.$ Moreover, by Lemma \ref{lem:exist_unique}, $\|u\|_\infty\le \|g\|_\infty$ and hence, from \eqref{ashfgghh} and  \eqref{hstfbvznn} it follows that $u$ satisfies  \eqref{asdfvv}. Now the assertions directly follow from Theorem \ref{teo:relaxed_jump}.
\end{proof}

When $\norm$ is Euclidean and $p=1,$ \eqref{ashfgghh} reads as 
\begin{equation}\label{eq:explicit_sigma}
\sigma = \frac14\,\min\Big\{ \alpha_0, |I|\Big\},
\end{equation}
where $\alpha_0=\frac{2\sqrt{\pi}}{4\pi+1}.$ Thus, Theorem \ref{teo:regular_minimizers} implies that every minimizer of $\cG $ belongs to $C^{1,1}(I)$ provided that $\|g\|_\infty< \sigma.$ This positively answers to Conjecture \ref{conj:dg_area} in case $n=k=1,$ except for the dependence of $\sigma$ on $|I|.$ Note that our $\sigma$ depends on $|I|,$  while the $\sigma $ of \cite{CLSS:2025} (with $p=2$) depends only on $1/\sqrt{|I|}.$

The following example shows that in general the local $C^{1,1}$-regularity of $u$ in Theorem \ref{teo:regular_minimizers} cannot be improved.

\begin{example}\label{exa:C_1_1}
Let $I=(-1,1),$ $\norm$ be Euclidean and 
$$
g(s)=
\begin{cases}
a & \text{if $s\in(0,1),$}\\
-a & \text{if $s\in(-1,0)$}
\end{cases}
$$
for some $a\in(0,1).$ Then the $C^{1,1}(I)\setminus C^2(I)$-function (see Fig. \ref{fig:convex_hull} (a))
$$
u(s)= 
\begin{cases}
a & \text{if $s\in[\sqrt{2a-a^2},1),$} \\
a-1+\sqrt{1-(s-\sqrt{2a-a^2})^2} \quad & \text{if $s\in[0,\sqrt{2a-a^2}]$},\\
1-a-\sqrt{1-(s+\sqrt{2a-a^2})^2} & \text{if $s\in[-\sqrt{2a-a^2},0],$}\\
-a & \text{if $s\in(-1,-\sqrt{2a-a^2}]$}
\end{cases}
$$
is the unique minimizer of $\cG .$ Indeed, for simplicity, set 
$$
h(s):=\frac{u'(s)}{\sqrt{1+{u'(s)}^2}} =
\begin{cases}
0 & \text{if $s\in[\sqrt{2a-a^2},1]$}\\
-s+\sqrt{2a-a^2} \quad & \text{if $s\in[0,\sqrt{2a-a^2}]$}\\
s+\sqrt{2a-a^2} & \text{if $s\in[-\sqrt{2a-a^2},0]$}\\
0 & \text{if $s\in[-1,-\sqrt{2a-a^2}] $}
\end{cases}
$$
so that $h\in \Lip([-1,1])\cap C^\infty([-1,1]\setminus\{0,\pm\sqrt{2a-a^2}\})$ with 
\begin{equation}
\label{zd6cgv5}
h'=
\begin{cases}
0 & \text{a.e. in $\{u=g\},$}\\
-1 & \text{a.e. in  $\{u<g\},$}\\
1 & \text{a.e. in $\{u>g\}.$}
\end{cases}
\end{equation}
As $h(\pm1)=0,$ for any $v\in BV(-1,1)$ by integrating by parts we have 
\begin{multline*}
\int_{-1}^1 (u-v)h'ds = (u-v)h\big|_{-1}^1 - \int_{-1}^1 hu'ds+\int_{-1}^1 h\,dDv \\
=-\int_{-1}^1 \frac{{u'}^2ds}{\sqrt{1+{u'}^2}} + \int_{-1}^1 \frac{u'dDv}{\sqrt{1+{u'}^2}}
= -\int_{-1}^1 \sqrt{1+{u'}^2}ds  + \int_{-1}^1 \frac{ds + u'dDv}{\sqrt{1+{u'}^2}}.
\end{multline*}
On the other hand, by the explicit expression of $h',$
$$
\int_{-1}^1 (u-v)h'ds=\int_{-1}^1 (u-g)h'ds+\int_{-1}^1 (g-v)h'ds=\int_{-1}^1 |u-g|ds + \int_{-1}^1(g-v)h'ds.
$$
Combining these two equalities, we deduce
\begin{equation}\label{hstfgbn13rg}
\int_{-1}^1 \sqrt{1+{u'}^2}ds + \int_{-1}^1 |u-g|ds = \int_{-1}^1 \frac{ds+u'dDv}{\sqrt{1+{u'}^2}}+\int_{-1}^1(v-g)h'ds.
\end{equation}
By the H\"older inequality\footnote{If $\lambda$ and $\mu$ 
are bounded Radon measures in a bounded open set $I\subset\R^n$ and $p,q\in C(\cl{I}),$ there
holds
$$
\int_I pd\mu + qd\lambda \le \int_I\sqrt{p^2+q^2}\, d\sqrt{\mu^2+\lambda^2},
$$
where 
$$
\int_U \sqrt{\mu^2+\lambda^2}:=\sup\Big\{\int_U \phi d\mu + \psi d\lambda:\,\, (\phi,\psi)\in C_c(U;\R^2),\,\, \|\phi^2+\psi^2\|_\infty\le 1\Big\}
$$
is the total variation of $(\mu,\lambda)$ in the 
open set $U\subset I.$ We apply this  inequality with $p=(1+{u'}^2)^{-1/2},$ $q=u'(1+{u'}^2)^{-1/2},$ $\mu=\sL^1$ and $\lambda=Dv.$ 
} and the bound $\|h'\|_\infty\le1,$ 
$$
\int_{-1}^1 \frac{ds+u'dDv}{\sqrt{1+{u'}^2}} \le \int_{-1}^1\sqrt{1+|Dv|^2}\quad\text{and}\quad 
\int_{-1}^1(v-g)h'ds \le \int_{-1}^1|v-g|ds.
$$
Therefore, from \eqref{hstfgbn13rg} we deduce 
$\cG (u) \le \cG (v).$

Next, let us show the uniqueness of $u.$ 
\begin{figure}[htp!]
\includegraphics[width=0.7\textwidth]{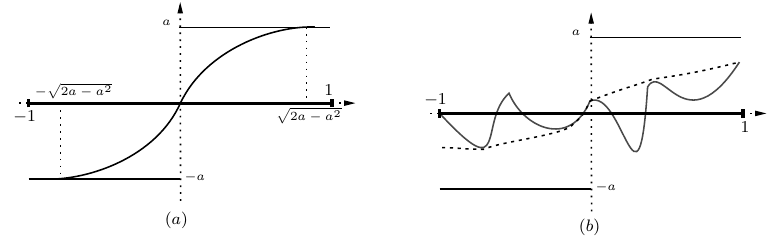}
\caption{\small  The graph of $u$ in (a) and convex/concave nondecreasing envelope of $v$ in (b).}\label{fig:convex_hull}
\end{figure}
Let $v\in BV(-1,1)$ be any other minimizer of $\cG .$ By Remark \ref{rem:truncation}, $-a\le v\le a.$ Let $v^*:[0,1]\to\R$ be the smallest nondecreasing concave function with $v^*\ge v$ a.e. in $[0,1]$ and $v_*$ be the largest nondecreasing convex function with $v_*\le v$ a.e. in $[0,1].$ Set $\bar v:=v^*\chi_{(0,1]}+v_*\chi_{[-1,0)}.$
As $g-v \ge g-\bar v\ge0$ in $(0,1)$ and
$v-g \ge \bar v-g\ge0$ in $(-1,0),$ we have 
$$
\int_{-1}^1 |g-v|ds \ge \int_{-1}^1 |g-\bar v|ds
$$
with strict inequality if $\{v\ne \bar v\}$ has   positive Lebesgue measure. Moreover, as we are replacing nonconcave/nonconvex parts of the graph of $v$ with line segments (see Fig. \ref{fig:convex_hull} (b)), 
$\cA_\norm(v,I) \ge \cA_\norm(\bar v,I).$ Thus, $\cG (\bar v)\le \cG (v).$ This inequality shows that we may assume $v=\bar v.$ In this case, by \eqref{relaxing}
$$
\int_{-1}^1 \frac{u'\,dDv}{\sqrt{1+{u'}^2}} = \int_{-1}^1\frac{u'v'ds}{\sqrt{1+{u'}^2}} + \sqrt{2a-a^2}\, (v^+(0)-v^-(0)).
$$
Thus, as above, from \eqref{hstfgbn13rg}, the H\"older inequality and \eqref{zd6cgv5} we get 
\begin{align}\label{dtdzg}
\cG (u)= & 
\int_{-1}^1 \sqrt{1+{u'}^2}ds + \int_{-1}^1 |u-g|ds = \int_{-1}^1 \frac{(1+u'v')ds}{\sqrt{1+{u'}^2}}+ \sqrt{2a-a^2}\, (v^+(0)-v^-(0)) +\int_{-1}^1|v-g|ds \nonumber\\
\le  & \int_{-1}^1 \sqrt{1+{v'}^2}ds + \sqrt{2a-a^2}\,(v^+(0)-v^-(0)) +\int_{-1}^1|v-g|ds \le \cG(v),
\end{align}
where in the last inequality we used $\sqrt{2a-a^2}<1.$ Since both $u$ and $v$ are minimizers, all inequalities in \eqref{dtdzg} are in fact equalities. In particular, $u'=v'$ a.e. in $(-1,1)$ and $v^+(0)=v^-(0).$ This implies $u=v+C$ for some real constant $C.$ Then,  recalling $u(\pm1)=\pm a$ and $-a\le v\le a,$ we deduce $C=0,$ i.e., $u=v.$
\end{example}

Notice that Example \ref{exa:C_1_1} shows that the threshold $\sigma$ in \eqref{eq:explicit_sigma} is not optimal, in general.

\subsection*{Data availability}

The paper has no associated data.

\subsection*{Author contributions and competing interests}

Both authors contributed equally to this paper. The authors declare that they have no competing interests.

\end{document}